\numberwithin{equation}{section}
\newtheorem{Theorem}{Theorem}[section]
\newtheorem{Corollary}[Theorem]{Corollary}
\newtheorem{Lemma}[Theorem]{Lemma}
\newtheorem{Proposition}[Theorem]{Proposition}
 { \theoremstyle{definition}

\newtheorem{Example}[Theorem]{Example}
\newtheorem{Remark}[Theorem]{Remark} }
\newcommand{\mgg}{\mathfrak g }
\newcommand{\C}{\mathbb C}
\newcommand{\F}{\mathbb F}
\newcommand{\func}[1]{\operatorname{#1}}
\begin{document}

\allowdisplaybreaks

\newcommand{\arXivNumber}{1811.07854}

\renewcommand{\PaperNumber}{051}

\FirstPageHeading

\ShortArticleName{De Rham 2-Cohomology of Real Flag Manifolds}

\ArticleName{De Rham 2-Cohomology of Real Flag Manifolds}

\Author{Viviana DEL BARCO~$^{\dag\ddag}$ and Luiz Antonio Barrera SAN MARTIN~$^\ddag$}

\AuthorNameForHeading{V.~del Barco and L.A.B.~San Martin}

\Address{$^\dag$~UNR-CONICET, Rosario, Argentina}
\EmailD{\href{mailto:delbarc@fceia.unr.edu.ar}{delbarc@fceia.unr.edu.ar}}
\URLaddressD{\url{http://www.fceia.unr.edu.ar/~delbarc/}}

\Address{$^\ddag$~IMECC-UNICAMP, Campinas, Brazil}
\EmailD{\href{mailto:smartin@ime.unicamp.br}{smartin@ime.unicamp.br}}

\ArticleDates{Received January 08, 2019, in final form June 25, 2019; Published online July 05, 2019}

\Abstract{Let $\mathbb{F}_{\Theta }=G/P_{\Theta }$ be a flag manifold associated to a non-compact real simple Lie group $G$ and the parabolic subgroup $P_{\Theta }$. This is a closed subgroup of $G$ determined by a subset $\Theta $ of simple restricted roots of $\mathfrak{g}=\operatorname{Lie}(G)$. This paper computes the second de Rham cohomology group of $\mathbb{F}_\Theta$. We prove that it is zero in general, with some rare exceptions. When it is non-zero, we give a basis of $H^2(\mathbb{F}_\Theta,\mathbb{R})$ through the Weil construction of closed 2-forms as characteristic forms of principal fiber bundles. The starting point is the computation of the second homology group of $\mathbb{F}_{\Theta }$ with coefficients in a ring $R$.}

\Keywords{flag manifold; cellular homology; Schubert cell; de Rham cohomology; characteristic classes}

\Classification{57T15; 14M15}

\section{Introduction}

A real flag manifold is a homogeneous manifold $\mathbb{F}_\Theta=G/P_\Theta$ where $G$ is a connected Lie group with Lie algebra $\mathfrak{g}$ which is non-compact and semi-simple, and $P_\Theta$ is a parabolic subgroup of~$G$. Real grassmannians and projective spaces belong to the family of real flag manifolds.

Topological properties of these manifolds have been of interest for several authors. The fundamental group of real flag manifolds was considered by Wiggerman~\cite{Wi} giving a continuation to the work of Ehresmann on real flag manifolds with $G=\mathrm{SL}(n,\mathbb{R})$. The integral homology of real flag manifolds has been studied by Kocherlakota who gives an algorithm for its computation through Morse homology~\cite{Ko}, based on previous work of Bott and Samelson~\cite{BS}. A different approach to study their homology is given by Rabelo and the second named author of this paper~\cite{RSM}, focusing on the geometry involved in the cellular decomposition of the manifolds. Mare \cite{Ma} considered the cohomology rings of a subfamily of real flag manifolds, namely, those having all roots of rank greater of equal than two. Real flag manifolds of split real forms are not part of the subfamily considered in~\cite{Ma}.

The present paper focuses on the computation of the second de Rham cohomology group of real flag manifolds. Our motivation comes from symplectic geometry; it is well known that the annihilation of the second de Rham cohomology group is an obstruction to the existence of symplectic structures on compact manifolds. It is worth stressing that we deal with real flag manifolds associated to any non-compact real form $G$ of complex simple Lie groups, and any parabolic subgroup, without restrictions.

To obtain the cohomology groups, we start with the explicit computation of second homology groups with coefficients on a ring $R$, $H_{2}(\mathbb{F}_\Theta,R)$. The description of $H_{2}(\mathbb{F}_\Theta,R)$ does not follow directly from the works of Kocherlakota and Rabelo--San Martin. Instead it requires to work over the root systems. The classification of the homology groups is achieved in Theorem \ref{teoResumo}, after developments in Sections~\ref{section2} and~\ref{sec.comput} where we apply the tools of Rabelo--San Martin~\cite{RSM}. Mainly, we show that $H_{2}(\mathbb{F}_\Theta,R)$ is a torsion group, except when there are roots of rank $2$ in the system of restricted roots of the real flag. The rank of a root $\alpha $ is $\operatorname{rank} \alpha =\dim \mathfrak{g}_{\alpha }+\dim \mathfrak{g}_{2\alpha }$, where these subspaces are the root spaces associated to $\alpha$ and $2\alpha$ (if it is a root).

It follows that $H_{\rm dR}^{2}(\mathbb{F}_{\Theta },\mathbb{R})=\{0\}$ unless the root system of the real Lie algebra contains roots having rank $2$. Moreover, the number of such roots gives the dimension of $H_{\rm dR}^{2}(\mathbb{F}_{\Theta },\mathbb{R})$ (see Theorem~\ref{tedeRham} below). These data can be read off from the classification table of real simple Lie algebras (see Warner \cite[pp.~30--32]{WarG}) and hence $H_{\rm dR}^{2}(\mathbb{F}_{\Theta },\mathbb{R})$ can be completely determined. We summarize the computation of the second de Rham cohomologies in Theorem~\ref{teoResumo}.

Once we have the classification of the real flag manifolds $\mathbb{F}_{\Theta }$ satisfying $H_{\rm dR}^{2}(\mathbb{F}_{\Theta },\mathbb{R})\neq \{0\}$ we search for differential $2$-forms representing a basis of these non-trivial spaces.

We get such a basis by applying the Weil construction to the principal fiber bundle $\pi \colon$ $K\longrightarrow \mathbb{F}_{\Theta }$ with structure group $K_{\Theta }=P_{\Theta }\cap K$, where $K$ is a maximal compact subgroup of~$G$, and $\mathbb{F}_{\Theta }=G/P_{\Theta }=K/K_{\Theta }$. To perform the Weil construction we choose in a standard way a left invariant connection $\omega $ in the principal bundle $K\longrightarrow \mathbb{F}_{\Theta }$ with curvature form $\Omega $. Each adjoint invariant $f$ in the dual $\mathfrak{k}_{\Theta }^{\ast }$ of the Lie algebra $\mathfrak{k}_{\Theta }$ of~$K_{\Theta }$ yields an invariant closed $2$-form $\widetilde{f}$ in~$K/K_{\Theta }$. We prove that the $2$-forms $\widetilde{f}$ exhaust the $2$-cohomology classes by exhibiting a basis formed by characteristic forms which is dual to the Schubert cells that generate the second real homology.

To prove the surjectivity of the map $f\rightarrow \widetilde{f}$ it is required a careful description of the center of~$K_{\Theta }$ which we provide in Section~\ref{seccenterM} by looking first at the center of the $M$ group where $M=K_{\varnothing }$ is the isotropy group of the maximal flag manifold. In Section~\ref{secCharacClass} we apply the previous results to get the desired dual bases of differential $2$-forms in Theorem~\ref{teo.baseZ}. In Section~\ref{secEsseu} we illustrate our results with concrete computations in the flag manifolds of the real simple Lie algebras $\mathfrak{su}(p,q) $, $p\leq q$, that are real forms of $\mathfrak{sl} ( p+q,\mathbb{C} ) $ and realize the Lie algebras of types $\mathrm{AIII}_{1}$ and $\mathrm{AIII}_{2}$.

As consequence of our results, we obtain that a real flag manifold does not carry symplectic structures, unless its corresponding root system contains roots of rank 2. Moreover, if the system contains roots of rank 2, the only case where the manifold is symplectic is when the real flag manifold is actually the product of complex flag manifolds of the form ${\rm SU}(n)/T$.

\section{Cellular decomposition and boundary maps}\label{section2}

This section aims to fix notations and to introduce the preliminaries for the rest of the paper. For the classical theory the reader is referred to the books of Knapp~\cite{knapp}, Helgason~\cite{He} and Warner~\cite{WarG}. The treatment of the cellular decomposition and the boundary maps of real flag manifolds follows the presentation given in the work of L.~Rabelo and L.~San Martin~\cite{RSM}.

Let $\mathfrak{g }$ be a non-compact real simple Lie algebra and let $\mathfrak{g }=\mathfrak{k }\oplus\mathfrak{s }$ be a Cartan decomposition. Let $\mathfrak{a }$ be a maximal abelian subalgebra of $\mathfrak{s }$ and denote $\Pi$ the set of restricted roots of the pair~$(\mathfrak{g },\mathfrak{a })$. Let $\Sigma$ be a subset of simple roots and denote $\Pi^{\pm}$ the set of positive and negative roots, respectively. The Iwasawa decomposition of $\mathfrak{g }$ is given by $\mathfrak{g }=\mathfrak{k }\oplus\mathfrak{a }\oplus\mathfrak{n }$ with $\mathfrak{n }=\sum\limits_{\alpha\in\Pi^+}\mathfrak{g }_\alpha$ and~$\mathfrak{g}_\alpha$ the
root space corresponding to $\alpha$.

Given a simple Lie group $G$ with Lie algebra $\mathfrak{g}$, denote $K$, $A$ and $N$ the connected subgroups corresponding to the Lie subalgebras $\mathfrak{k}$, $\mathfrak{a}$ and $\mathfrak{n}$, respectively.

To a subset of simple roots $\Theta \subset \Pi $ there is associated the parabolic subalgebra
\begin{gather*}
\mathfrak{p}_{\Theta }=\mathfrak{a}\oplus \mathfrak{m}\oplus \sum_{\alpha \in \Pi ^{+}}\mathfrak{g}_{\alpha }\oplus \sum_{\alpha \in \langle\Theta\rangle ^{-}}\mathfrak{g}_{\alpha },
\end{gather*}
where $\mathfrak{m}$ is the centralizer of $\mathfrak{a}$ in $\mathfrak{k}$ and $\langle\Theta\rangle ^{-}$ is the set of negative roots generated by $\Theta $. The minimal parabolic subalgebra $\mathfrak{p}$ is obtained for $\Theta =\varnothing $. The normalizer $P_{\Theta }$ of $\mathfrak{p}_{\Theta }$ in $G$ is the standard parabolic subgroup associated to $\Theta $. In this manner, each subset $\Theta \subset \Sigma $ defines the homogeneous manifold $\mathbb{F}_{\Theta }=G/P_{\Theta }$; these homogeneous manifolds are the object of study of this paper. Denote by $b_{\Theta }$ the class of the identity in $F_{\Theta }$, that is, $b_{\Theta }=eP_{\Theta }$. For $\Theta =\varnothing $ the index $\Theta $ is dropped to simplify notations and $\mathbb{F}$ is referred as the maximal flag manifold, also called full or complete flag manifold. When $\Theta \neq \varnothing $, $\mathbb{F}_{\Theta }$ is called partial flag manifold.

Let $\mathcal{W}$ be the Weyl group associated to $\mathfrak{a}$. This is a finite group and is generated by reflec\-tions~$r_{\alpha }$ over the hyperplanes $\alpha (H) =0$ in $\mathfrak{a}$, with $\alpha $ a~simple root and $H$ a regular element. The length $\ell (w)$ of an element $w\in \mathcal{W}$ is the number of simple reflections in any reduced expression as a product $w=r_{\alpha _{1}}\cdots r_{\alpha _{t}}$ of reflections with $\alpha _{i}\in \Sigma $. Denote $\Pi _{w}=\Pi ^{+}\cap w\Pi ^{-}$, the set of positive roots taken to negative by $w^{-1}$. For $\Theta \subset \Sigma $, $\mathcal{W}_{\Theta }$ denotes the subgroup of $\mathcal{W}$ generated by $r_{\alpha} $ with $\alpha \in \Theta $.

The Weyl group is isomorphic the quotient $M^*/M$ of the normalizer $M^*$ of $\mathfrak{a }$ in $K$ over the respective centralizer $M$. It acts, through $M^*$, on $\mathbb{F}_\Theta$ and the left $N$ classes of the orbit $M^* b_\Theta$ give a decomposition of $\mathbb{F}_\Theta$, known as the Bruhat decomposition:
\begin{gather*}
\mathbb{F}_\Theta=\coprod_{w\mathcal{W}_\Theta \in \mathcal{W}/\mathcal{W}_\Theta}N\cdot w b_\Theta.
\end{gather*}
One should notice that $N\cdot w b_\Theta$ does not depend on the choice of the representative, namely, $N\cdot w_1 b_\Theta=N\cdot w_2 b_\Theta$ whenever $w_1\mathcal{W}_\Theta =w_2\mathcal{W}_\Theta$.

A cellular decomposition of the flag manifold $\mathbb{F}_\Theta$ is given by Schubert cells ${\mathcal{S}}_w^\Theta=\operatorname{cl}(N\cdot wb_\Theta)$ with $w$ a representative of $w\mathcal{W}_\Theta$; here $\operatorname{cl}$ denotes closure. The dimension of the Schubert cell defined by $w\mathcal{W}_\Theta$ is
\begin{gather*}
\dim {\mathcal{S}}_w^\Theta =\sum_{\alpha\in \Pi_w\backslash \langle \Theta \rangle^+}\dim \mathfrak{g }_\alpha.
\end{gather*}
For the maximal flag this formula reads $\dim S_w=\sum\limits_{\alpha\in \Pi_w}\dim \mathfrak{g }_\alpha$. If $w\in \mathcal{W}$ has reduced expression $w=r_{\alpha _{1}}\cdots r_{\alpha _{t}}$, then
\begin{gather}
\dim S_{w}=\sum_{i=1}^{t}\dim (\mathfrak{g}_{\alpha _{i}}+\mathfrak{g}_{2\alpha _{i}}) \label{eq.dimSw}
\end{gather}
(see \cite[Corollary 2.6]{Wi}). In particular, $\dim S_w=\ell(w)$ for all $w\in \mathcal{W}$ if $\mgg$ is a split real form. For a simple root $\alpha $, we denote
\begin{gather*}
\operatorname{rank}\alpha :=\dim \mathfrak{g}_{\alpha }+\dim \mathfrak{g}_{2\alpha }.
\end{gather*}
Notice that $\operatorname{rank} \alpha=\dim S_{r_{\alpha }}$. Moreover, $\mathfrak{g}_{2\alpha }=\{0\}$ if $\dim \mathfrak{g}_{\alpha }=1$ so that $\operatorname{rank}\alpha =2$ if and only if $\dim \mathfrak{g}_{\alpha }=2$. If $w=r_{\alpha _{1}}\cdots r_{\alpha _{t}}$ is a reduced expression then $\dim S_{w}\geq \ell (w)=t$ if and only if $\operatorname{rank}\alpha_{i}=1$ for all $i=1,\dots,t$.

At this point we fix, once and for all, reduced expressions of the elements $w$ in $\mathcal{W}$, $w=r_{\alpha_1}\cdots r_{\alpha_t}$, as a product of simple reflections. Such decompositions determine the characteristic maps which describe how Schubert cells are glued to give the cellular decomposition of~$\mathbb{F}_\Theta$~\cite{RSM}.

In what follows we recall the definition of the cellular complex and the boundary map giving the homology of~$\mathbb{F}_\Theta$ with coefficients in a ring~$R$. We give explicit formulas for the boundary maps up to dimension $3$ which will be used in the next section.

Let $\mathcal{C}_{i}$ be the $R$-module freely generated by $\mathcal{S}_{w}$, $w\in \mathcal{W}$ and $\dim S_{w}=i$, for $i=0,\ldots,\dim \mathbb{F}$. Notice that $\mathcal{C}_{0}=R$ since there is just one zero-dimensional cell, namely the origin $\{b\}$. Define
\begin{gather*}
\Sigma _{\mathrm{split}} =\{\alpha \in \Sigma \colon \operatorname{rank}\alpha =\dim \mathfrak{g}_{\alpha }=1\}, \\
\Sigma _{2} =\{\alpha \in \Sigma \colon \operatorname{rank}\alpha =2\}.
\end{gather*}

\begin{Proposition}\label{pro.chains}\quad
\begin{itemize}\itemsep=0pt
\item $\mathcal{C}_1$ is the free module spanned by $S_{r_\alpha}$ with $\alpha\in \Sigma_{\mathrm{split}}$.
\item $\mathcal{C}_{2}$ is the free module spanned by $S_{r_{\alpha}r_{\beta }}$ with $\alpha \neq \beta \in \Sigma _{\mathrm{split}}$ and by $S_{r_{\alpha }}$ with $\alpha \in \Sigma _{2}$.
\item $\mathcal{C}_{3}$ is the free module spanned by $S_{r_{\alpha}r_{\beta }r_{\gamma }}$ with $\beta \neq \alpha \neq \gamma \in \Sigma _{\mathrm{split}}$, by $S_{r_{\alpha }r_{\beta }}$ with $\alpha \in \Sigma_{2} $ and $\beta \in \Sigma _{\mathrm{split}}$, or vice-versa, and by $S_{r_{\alpha }}$, with $\alpha \in \Sigma $ and $\operatorname{rank}\alpha =3$.
\end{itemize}
\end{Proposition}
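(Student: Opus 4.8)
The plan is to deduce all three statements from the dimension formula \eqref{eq.dimSw} together with the very definition of $\mathcal{C}_i$ as the free $R$-module on the set of Schubert cells $S_w$ with $\dim S_w=i$. Since the Bruhat decomposition indexes the cells bijectively by $w\in\mathcal{W}$ and $\dim S_w$ depends only on $w$, it is enough to determine, for each $i\in\{1,2,3\}$, exactly which $w$ satisfy $\dim S_w=i$, and then to translate the condition on the simple roots occurring in a reduced expression of $w$ into membership in $\Sigma_{\mathrm{split}}$, in $\Sigma_2$, or in $\{\alpha\colon\operatorname{rank}\alpha=3\}$.

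The quantitative engine is the elementary bound $\operatorname{rank}\alpha\ge1$ for every simple root. Fixing a reduced expression $w=r_{\alpha_1}\cdots r_{\alpha_t}$, formula \eqref{eq.dimSw} reads $\dim S_w=\sum_{i=1}^t\operatorname{rank}\alpha_i$, so $\dim S_w\ge t=\ell(w)$, with equality if and only if every $\alpha_i$ lies in $\Sigma_{\mathrm{split}}$. Hence $\dim S_w=i$ forces $\ell(w)\le i$, and the admissible $w$ are governed by the finitely many ways of writing $i$ as an ordered sum $\sum_j\operatorname{rank}\alpha_j$ with $t\le i$ summands, each summand a positive integer realized as the rank of the corresponding simple root. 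I would then run this partition analysis directly: for $i=1$ the only option is $t=1$ with $\operatorname{rank}\alpha_1=1$; for $i=2$ either $t=1$ with $\operatorname{rank}\alpha_1=2$ or $t=2$ with both ranks equal to $1$; and for $i=3$ the three partitions $3$, $2+1$, $1+1+1$ give the length-one, length-two and length-three contributions, respectively. Reading off the resulting root conditions reproduces the three bullet points.

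The point needing genuine care is the reducedness of the expressions, since the chain groups are free on cells, i.e.\ on honest Weyl elements, not on words. For $t=1$ this is vacuous, and for $t=2$ reducedness is equivalent to $\alpha\ne\beta$, which is why the length-two generators carry only the constraint that the two split roots be distinct; here I must also check that the listed set of cells is exactly the set of length-two elements with both roots in $\Sigma_{\mathrm{split}}$, noting that a commuting pair $r_\alpha r_\beta=r_\beta r_\alpha$ contributes a single cell. The length-three case is the main obstacle: consecutive reflections must differ, and one must verify that no further collapse occurs through a braid relation, so that the cells $S_{r_\alpha r_\beta r_\gamma}$ with $\alpha,\beta,\gamma\in\Sigma_{\mathrm{split}}$ and the word reduced enumerate precisely the length-three all-split cells. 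I would handle this by invoking the deletion condition for Coxeter groups, which reduces the verification to the consecutive-distinctness requirements $\alpha\ne\beta$ and $\beta\ne\gamma$ together with excluding the degenerate collapse $r_\alpha r_\beta r_\alpha=r_\beta$ that occurs only when $\gamma=\alpha$ and $r_\alpha,r_\beta$ commute, after which the three cases match the asserted generating sets.
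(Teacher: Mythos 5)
Your proposal is correct and follows essentially the same route as the paper: both read off the cells of dimension $i\le 3$ from the formula $\dim S_w=\sum_{j}\operatorname{rank}\alpha_j$ over a reduced expression and enumerate the partitions of $i$ into ranks of simple roots. The extra care you take about reducedness of the length-two and length-three words (deletion condition, commuting collapse $r_\alpha r_\beta r_\alpha=r_\beta$) is left implicit in the paper but does not change the argument.
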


\begin{proof}By definition $S_{r_{\alpha }}$, $S_{r_{\alpha }r_{\beta }}$, $S_{r_{\alpha}r_{\beta }r_{\gamma }}$ are of dimensions $1$, $2$ and $3$ respectively, when $\alpha,\beta,\gamma \in \Sigma _{\mathrm{split}}$ and $\gamma \neq \alpha \neq \beta $ and $\dim S_{r_{\alpha }}=2$ if $\operatorname{rank}\alpha=\dim \mathfrak{g}_{\alpha }=2$. Similarly one obtains $\dim S_{r_{\alpha}}=3$ if and only if $2\alpha $ is not a~root and $\dim \mathfrak{g}_{\alpha}=3$ or $\dim \mathfrak{g}_{\alpha }=2$ and $\dim \mathfrak{g}_{2\alpha }=1$ since $2\alpha $ is not a~root if $\dim \mathfrak{g}_{\alpha }=1$.

Given $w=r_{\alpha }r_{\beta }$, $\dim S_{w}=3$ only if $\dim (\mathfrak{g}_{\alpha }+\mathfrak{g}_{2\alpha })=2$ and $\dim (\mathfrak{g}_{\beta }+\mathfrak{g}_{2\beta })=1$, or vice-versa. But, $\dim (\mathfrak{g}_{\alpha}+\mathfrak{g}_{2\alpha })=2$ implies $\dim \mathfrak{g}_{\alpha }=2$ and $2\alpha $ is not a root. Thus $\dim S_{w}=3$ implies $\alpha \in \Sigma _{2}$ and $\beta \in \Sigma _{\mathrm{split}}$ or the inverse situation for $%
\alpha $ and $\beta $.
\end{proof}

The chain complex of the partial flag manifold $\F_\Theta$ is constituted by {\em minimal} Schubert cells. For any $w\in \mathcal{W}$ there exists a unique $w_{1}\in w\mathcal{W}_{\Theta }$ such that $\Pi ^{+}\cap w_{1}^{-1}\Pi ^{-}\cap \langle\Theta\rangle =\varnothing$ \cite[Lemma~3.1]{RSM}. Such element is called {\em minimal} in $w\mathcal{W}_{\Theta }$ and satisfies $\dim S_{w}^{\Theta}=\dim S_{w_{1}}$. Deno\-te~$\mathcal{W}_{\Theta }^{\min }$ the set of minimal elements in $\mathcal{W}$ with respect to $\Theta $. For $i=0,\ldots,\dim \mathbb{F}_{\Theta }$ let $\mathcal{C}_{i}^{\Theta }\subset \mathcal{C}_{i}$ be the free $R$ module spanned by $S_{w}$ with $w\in \mathcal{W}_{\Theta}^{\min }$ and $\dim S_{w}=i$. Next we describe the minimal elements giving cells of dimension $\leq 3$.

\begin{Lemma}\label{lm.minimalroots}\quad
\begin{itemize}\itemsep=0pt
\item $w=r_\alpha$ is minimal if and only if $\alpha\notin \Theta$,
\item $w=r_{\alpha }r_{\beta }$ is minimal if and only if $\beta \notin \Theta $ and $r_{\beta }\alpha \notin \langle\Theta\rangle $,
\item $w=r_{\alpha }r_{\beta }r_{\gamma }$ is minimal if and only if $\gamma \notin \Theta $ and $r_{\gamma }\beta,r_{\gamma }r_{\beta }\alpha \notin \langle\Theta\rangle $.
\end{itemize}
\end{Lemma}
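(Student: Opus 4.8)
The plan is to reduce the whole statement to a single combinatorial criterion and then evaluate it on reduced words of length at most three. Recall from \cite[Lemma~3.1]{RSM} that the minimal representative of a coset $w\mathcal{W}_\Theta$ is the unique $w_1\in w\mathcal{W}_\Theta$ with $\Pi^+\cap w_1^{-1}\Pi^-\cap\langle\Theta\rangle=\varnothing$. By uniqueness, $w$ is itself minimal exactly when it already enjoys this property, that is, when
\[
\big(\Pi^+\cap w^{-1}\Pi^-\big)\cap\langle\Theta\rangle=\varnothing .
\]
Setting $N(w)=\Pi^+\cap w^{-1}\Pi^-=\{\delta\in\Pi^+\colon w\delta\in\Pi^-\}$ (the set of positive roots sent to negative roots by $w$), the lemma is precisely the assertion $N(w)\cap\langle\Theta\rangle=\varnothing$ for each of the three prescribed shapes of $w$. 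Note that this is different from the set $\Pi_w=\Pi^+\cap w\Pi^-$ used earlier: here the inverse of $w$ appears, matching the way minimality is phrased.

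The computational input is the standard list form of $N(w)$ for a reduced word. First I would recall, or reprove by the length-additivity recursion
\[
N(w)=\{\alpha_t\}\sqcup r_{\alpha_t}\,N(w') \qquad\text{for } w=w'r_{\alpha_t}\text{ with }\ell(w)=\ell(w')+1 ,
\]
the identity that for a reduced expression $w=r_{\alpha_1}\cdots r_{\alpha_t}$ one has
\[
N(w)=\big\{\,r_{\alpha_t}r_{\alpha_{t-1}}\cdots r_{\alpha_{i+1}}(\alpha_i)\colon 1\le i\le t\,\big\},
\]
with all $t$ listed roots positive and distinct (the recursion uses that $w'\alpha_t>0$ when $\ell(w'r_{\alpha_t})>\ell(w')$, and positivity of the remaining entries follows from reducedness). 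Applying this formula yields $N(r_\alpha)=\{\alpha\}$, then $N(r_\alpha r_\beta)=\{\beta,\,r_\beta\alpha\}$, and finally $N(r_\alpha r_\beta r_\gamma)=\{\gamma,\,r_\gamma\beta,\,r_\gamma r_\beta\alpha\}$. Here the standing hypotheses $\alpha\neq\beta$ and $\beta\neq\gamma$ are exactly what guarantee that these words are reduced, so that the lists have the correct cardinality.

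It then remains to impose $N(w)\cap\langle\Theta\rangle=\varnothing$ and read off each equivalence, using the elementary fact that a simple root $\delta$ lies in $\langle\Theta\rangle$ if and only if $\delta\in\Theta$. For $w=r_\alpha$ the condition $\alpha\notin\langle\Theta\rangle$ is just $\alpha\notin\Theta$. For $w=r_\alpha r_\beta$ the two requirements $\beta\notin\langle\Theta\rangle$ and $r_\beta\alpha\notin\langle\Theta\rangle$ collapse to $\beta\notin\Theta$ together with $r_\beta\alpha\notin\langle\Theta\rangle$. For $w=r_\alpha r_\beta r_\gamma$ the three requirements $\gamma\notin\langle\Theta\rangle$, $r_\gamma\beta\notin\langle\Theta\rangle$, $r_\gamma r_\beta\alpha\notin\langle\Theta\rangle$ become $\gamma\notin\Theta$ together with $r_\gamma\beta,\,r_\gamma r_\beta\alpha\notin\langle\Theta\rangle$, which is exactly the claim.

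The only genuine obstacle is orienting the inversion-set formula correctly: one must make sure the \emph{trailing} reflections $r_{\alpha_t}\cdots r_{\alpha_{i+1}}$ are applied (rather than the leading ones), since minimality is governed by $w^{-1}\Pi^-$ and not by $w\Pi^-$; getting this backwards would swap $\beta$ and $\alpha$ in the length-two case and scramble the length-three case. Beyond that the argument is a direct substitution, and in particular no analysis of root ranks or of the dimensions $\dim\mathfrak{g}_\alpha$ is needed, because minimality is purely a matter of Weyl-group combinatorics.
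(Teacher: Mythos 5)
Your proof is correct and follows essentially the same route as the paper: both reduce minimality to the condition that the inversion set $\Pi^{+}\cap w^{-1}\Pi^{-}$ avoids $\langle\Theta\rangle$, and then list that set explicitly for reduced words of length at most three, obtaining $\{\alpha\}$, $\{\beta,r_{\beta}\alpha\}$ and $\{\gamma,r_{\gamma}\beta,r_{\gamma}r_{\beta}\alpha\}$. The one detail the paper adds, and which you should too, is that the restricted root system of a real form may be non-reduced, so the inversion sets also contain the multiples $2\gamma$, $2r_{\gamma}\beta$, etc.\ when these are actually roots --- your claim that $N(w)$ has exactly $t$ entries is literally false in type $BC$ --- but since $2\delta\in\langle\Theta\rangle$ if and only if $\delta\in\langle\Theta\rangle$, this does not affect the final criterion.
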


\begin{proof}The only positive roots taken into a negative root by $w=r_{\alpha }$ with $\alpha \in \Sigma $ are $\alpha $ and $2\alpha $, when this is a root. So $\Pi ^{+}\cap r_{\alpha }\Pi ^{-}\cap \langle\Theta\rangle^{+}=\varnothing $ if and only if $\alpha \notin \Theta $. If $w=r_{\alpha }r_{\beta }$, with $\alpha,\beta \in \Sigma $, $\alpha \neq
\beta $ then the positive roots taken to negative by $r_{\alpha }r_{\beta }$ are $\beta $, $r_{\beta }\alpha $ and the multiples $2\beta $ and $r_{\beta}(2\alpha )$ when they are actually roots. Hence $w$ is minimal if and only if $\beta \notin \Theta $ and $r_{\beta }\alpha \notin \langle\Theta\rangle $. Similarly, for $w=r_{\alpha}r_{\beta }r_{\gamma }$ with $\alpha \neq \beta \neq \gamma $ we have
\begin{gather*}
w\Pi ^{+}\cap \Pi ^{-}=\{\gamma,2\gamma,r_{\gamma }\beta,2r_{\gamma}\beta,r_{\gamma }r_{\beta }\alpha,2r_{\gamma }r_{\beta }\alpha \}\cap \Pi ^{+}.
\end{gather*}
Thus $w=r_{\alpha }r_{\beta }r_{\gamma }$ is minimal if $\gamma$, $r_{\gamma}\beta$, $r_{\gamma }r_{\beta }\alpha $ are not in $\langle\Theta\rangle $.
\end{proof}

The boundary operator $\partial $ applied to a Schubert cell $\mathcal{S}_{w} $ of dimension~$i$, gives a linear combination of Schubert cells of dimension $i-1$
\begin{gather}
\partial \mathcal{S}_{w}=\sum_{w^{\prime }}c(w,w^{\prime })\mathcal{S}_{w^{\prime }}. \label{eq.boundcoeff}
\end{gather}
The coefficients $c(w,w^{\prime })$ are given in~\cite[Section~2]{RSM}. These coefficients are always $0$ or $\pm 2$ and they behave as follows (see also~\cite{SR}). For $\alpha\in \Sigma$ we denote $\alpha ^{\vee }:=\frac{2\alpha }{ \langle \alpha,\alpha \rangle }$, so that $ \langle \alpha ^{\vee },\beta \rangle $ is an integer for $\beta \in \Sigma$ (see for instance \cite{He}).
\begin{enumerate}\itemsep=0pt
\item \label{onlysupr}Given $w=r_{\alpha _{1}}\cdots r_{\alpha _{t}}\in \mathcal{W}$, one has $c(w,w^{\prime })=0$ for any $w^{\prime }\in \mathcal{W}$ which is not obtained from $w$ by removing a reflection $r_{\alpha _{i}}$.
\item \label{suprult}If $w=r_{\alpha _{1}}\cdots r_{\alpha _{t-1}}r_{\alpha_{t}}$ and $w^{\prime }=r_{\alpha _{1}}\cdots r_{\alpha _{t-1}}$ then $c(w,w^{\prime })=0$.
\item Assume $w=r_{\alpha _{1}}\cdots r_{\alpha _{t}}$ and $w^{\prime}=r_{\alpha _{1}}\cdots \widehat{r}_{\alpha _{i}}\cdots r_{\alpha _{t}}$ are reduced decompositions of $w$ and $w^{\prime }$ respectively. If $\operatorname{rank} \alpha _{i}=1$ then $c(w,w^{\prime })=\pm \big(1-(-1) ^{\sigma ( w,w^{\prime }) }\big) $ where
\begin{gather}
\sigma (w,w^{\prime}) =\sum_{\beta \in \Pi _{u}}\langle \alpha _{i}^\vee,\beta \rangle \dim \mathfrak{g}_{\beta },\qquad u=r_{\alpha _{i+1}}\cdots r_{\alpha_{t}}. \label{forsigmasoma}
\end{gather}%
Otherwise, $c(w,w^{\prime })=0$.
\end{enumerate}

Notice that if $w=r_{\alpha_1}\cdots r_{\alpha_t}$ is a reduced expression, the coefficients $c(w,w^{\prime })$ are non-zero only when, by erasing a reflection $r_{\alpha_i}$ with $i\neq t$, a reduced expression of a root $w^{\prime }$ with $\dim S_{w^{\prime }}=\dim S_w-1$ is obtained.

The choice of the sign $\pm $ in $c(w,w^{\prime })$ is given by the degree of a certain map depending on $w$ and $w^{\prime }$ \cite[Theorem~2.6]{RSM} and it can be determined precisely. The computations here will not need the exact number, so the $\pm $ will be present along the paper.

The next proposition gives the boundary of cells in $\mathcal C_i$ for $i\leq 3$.
\begin{Proposition}\label{pro.boundarymax}Given simple roots $\alpha$, $\beta$, $\gamma $ there are the following expressions for the boundary operator $\partial $:
\begin{enumerate}\itemsep=0pt
\item[$1.$] $\partial S_{r_{\alpha }}=0$.
\item[$2.$] If $\alpha \neq \beta $ then $\partial S_{r_{\alpha }r_{\beta }}=c( r_{\alpha }r_{\beta },r_{\beta}) S_{r_{\beta }}$. In particular, if $S_{r_{\alpha }r_{\beta }}$ is a $3$-cell then $\partial S_{r_{\alpha}r_{\beta }}=0$. $($This is the case when $\alpha \in \Sigma _{2}$ and $\beta\in \Sigma _{\mathrm{split}}$ or vice-versa.$)$
\item[$3.$] If $\alpha \neq \beta $ then
\begin{gather}
\partial S_{r_{\alpha }r_{\beta }r_{\alpha }}=c ( r_{\alpha }r_{\beta}r_{\alpha },r_{\beta }r_{\alpha } ) S_{r_{\alpha }r_{\beta }}.\label{eq.deltaSa}
\end{gather}
\item[$4.$] If $\gamma \neq \alpha $ and $\alpha,\beta,\gamma \in \Sigma _{\mathrm{split}}$ then
\begin{gather}
\partial S_{r_{\alpha }r_{\beta }r_{\gamma }}=c ( r_{\alpha }r_{\beta}r_{\gamma },r_{\alpha }r_{\gamma } ) S_{r_{\alpha }r_{\gamma}}+c ( r_{\alpha }r_{\beta }r_{\gamma },r_{\beta }r_{\gamma } )S_{r_{\beta }r_{\gamma }}. \label{eq.deltaSaba}
\end{gather}
\end{enumerate}
\end{Proposition}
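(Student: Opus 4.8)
The plan is to read off all four formulas from the three rules for the coefficients $c(w,w')$ recorded just before the statement, combined with the dimension formula \eqref{eq.dimSw}. The mechanism is uniform: after fixing the reduced word $w=r_{\alpha_1}\cdots r_{\alpha_t}$, rule~\ref{onlysupr} shows that only the $w'$ obtained by deleting a single reflection $r_{\alpha_i}$ can contribute; rule~\ref{suprult} kills the deletion of the last reflection $r_{\alpha_t}$; and the third rule says each remaining deletion contributes $0$ whenever the removed reflection has rank $\geq 2$ or the resulting subword fails to be reduced, and otherwise contributes the explicit value $\pm\big(1-(-1)^{\sigma}\big)$. So for each $w$ I would enumerate the single-reflection deletions, discard the last one together with the non-reduced ones, and collect the survivors.

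For item~$1$ the word $r_\alpha$ has a single letter, so its only deletion removes the last reflection and contributes $0$ by rule~\ref{suprult}; hence $\partial S_{r_\alpha}=0$ whatever the value of $\operatorname{rank}\alpha$. For the first part of item~$2$, with $w=r_\alpha r_\beta$ and $\alpha\neq\beta$, deleting $r_\alpha$ yields the reduced word $r_\beta$ while deleting $r_\beta$ removes the last reflection; thus only the first survives and $\partial S_{r_\alpha r_\beta}=c(r_\alpha r_\beta,r_\beta)S_{r_\beta}$. For item~$3$, where $w=r_\alpha r_\beta r_\alpha$ is a $3$-cell (forcing $\alpha,\beta\in\Sigma_{\mathrm{split}}$), the three deletions give $r_\beta r_\alpha$, then $r_\alpha r_\alpha=e$, then $r_\alpha r_\beta$; the middle one is not a reduced length-$2$ word and the last removes $r_{\alpha_t}$, so both vanish, leaving the single term with coefficient $c(r_\alpha r_\beta r_\alpha,r_\beta r_\alpha)$, which is the term of \eqref{eq.deltaSa}. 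For item~$4$, with $w=r_\alpha r_\beta r_\gamma$, all three roots in $\Sigma_{\mathrm{split}}$ and pairwise distinct (as required for a reduced length-$3$ word), deleting $r_\alpha$ gives the reduced word $r_\beta r_\gamma$, deleting $r_\beta$ gives $r_\alpha r_\gamma$, and deleting $r_\gamma$ removes the last reflection; since all roots have rank~$1$, the third rule applies to the two survivors and produces exactly \eqref{eq.deltaSaba}.

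The one substantive point, which I expect to be the main obstacle, is the vanishing assertion in item~$2$ when $S_{r_\alpha r_\beta}$ is a $3$-cell. By Proposition~\ref{pro.chains} this forces exactly one of $\alpha,\beta$ to lie in $\Sigma_2$. If $\alpha\in\Sigma_2$ then $\operatorname{rank}\alpha=2$ and the surviving coefficient $c(r_\alpha r_\beta,r_\beta)$ is $0$ by the third rule. If instead $\alpha\in\Sigma_{\mathrm{split}}$ and $\beta\in\Sigma_2$, I would evaluate the parity exponent \eqref{forsigmasoma} with $u=r_\beta$: because $\operatorname{rank}\beta=2$ forces $\dim\mathfrak g_\beta=2$ and makes $2\beta$ a non-root, one has $\Pi_{r_\beta}=\{\beta\}$, so
\begin{gather*}
\sigma(r_\alpha r_\beta,r_\beta)=\langle\alpha^\vee,\beta\rangle\dim\mathfrak g_\beta=2\langle\alpha^\vee,\beta\rangle,
\end{gather*}
which is even since $\langle\alpha^\vee,\beta\rangle\in\mathbb Z$. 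Hence $c(r_\alpha r_\beta,r_\beta)=\pm\big(1-(-1)^{\sigma}\big)=0$ and $\partial S_{r_\alpha r_\beta}=0$. The crux is exactly that the factor $\dim\mathfrak g_\beta=2$ forces the exponent $\sigma$ to be even; everything else is bookkeeping with the three rules.
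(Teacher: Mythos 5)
Your proof is correct and follows essentially the same route as the paper's: enumerate the single-reflection deletions, discard the last-reflection and non-reduced ones via the stated rules, and for the $3$-cell case of item~$2$ observe that either the removed reflection has rank $2$ (so the coefficient vanishes by the third rule) or $\sigma(r_\alpha r_\beta,r_\beta)=\langle\alpha^\vee,\beta\rangle\dim\mathfrak{g}_\beta$ is even because $\dim\mathfrak{g}_\beta=2$. The parity computation you identify as the crux is exactly the one the paper uses.
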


\begin{proof}The boundary of $S_{r_{\alpha }}$ contains only the $0$-cell hence $\partial S_{r_{\alpha }}=0$ regardless the dimension of $S_{r_{\alpha }}$. For the cell $S_{r_{\alpha }r_{\beta }}$ its boundary $\partial S_{r_{\alpha}r_{\beta }}$ has no component in the direction of $S_{r_{\alpha }}$ which obtained by removing the last reflection $r_{\beta }$ from $r_{\alpha}r_{\beta }$. If $\operatorname{rank}\alpha =2$ then $c( r_{\alpha}r_{\beta },r_{\beta }) =0$ because $S_{r_{\beta }}$ has codimension $2 $ in $S_{r_{\alpha }r_{\beta }}$. On the other hand if $\operatorname{rank} \beta =2$ then $c(r_{\alpha }r_{\beta },r_{\beta })=\pm \big( 1-(-1) ^{\sigma ( r_{\alpha }r_{\beta },r_{\beta } ) }\big)=0 $ since $\sigma ( r_{\alpha }r_{\beta },r_{\beta }) =\langle \alpha^\vee,\beta \rangle\dim \mathfrak{g}_{\beta }$ is even. This proves the second statement. The last two statements follow from the fact that $c( r_{\alpha }r_{\beta }r_{\gamma },r_{\alpha }r_{\beta })=0$ (removal of the last reflection) and $c( r_{\alpha }r_{\beta}r_{\gamma },r_{\alpha }r_{\alpha }) =0$ since $r_{\alpha }^{2}=1$ is not a reduced expression.
\end{proof}

The boundary map $\partial_\Theta^{\min}\colon \mathcal{C}_i^\Theta\longrightarrow \mathcal{C}_{i-1}^\Theta$ is defined through the boundary map $\partial$ for the homology of the maximal flag. Given $S_w$, with $w\in \mathcal{W}_\Theta^{\min}$, let $I_w$ be the set of minimal elements~$w^{\prime }$ such that $\dim S_{w^{\prime }}=\dim S_w-1$. Define
\begin{gather*} \partial^{\min}_\Theta S_w=\sum_{w^{\prime }\in I_w}c(w,w^{\prime })S_{w^{\prime }}.\end{gather*} The homology
of $\mathbb{F}_\Theta$ with coefficients in $R$ is the homo\-logy of the complex $\big(\mathcal{C}^\Theta,\partial^{\min}_\Theta\big)$ \cite[Theorem~3.4]{RSM}. Proposition~\ref{pro.boundarymax} gives the boundary map of minimal cells up to dimension three.

Propositions \ref{pro.chains} and~\ref{pro.boundarymax} and Lemma~\ref{lm.minimalroots} account to a description of the second homology group of a real flag manifold in terms of its split part and the roots of rank $2$ in the system. In the text below we include the maximal flag case $\mathbb F$ as the particular instance $\mathbb F_\Theta$ with $\Theta=\varnothing$.

Denote $\mathfrak{g }_{\mathrm{split}}$ the split real form whose Dynkin diagram is given by $\Sigma_{\mathrm{split}}$. Set $\Theta_{\mathrm{split}}=\Theta\cap\Sigma_{\mathrm{split}}$ and $\Sigma _{2}^{\Theta }=\{\alpha \in \Sigma \backslash \Theta \colon \operatorname{rank}\alpha =2\}$ and let $\mathbb{F}_{\Theta_{\mathrm{split}}}$ be the flag manifold of $\mathfrak{g }_{\mathrm{split}}$ associated to~$\Theta_{\mathrm{split}}$.

\begin{Theorem} \label{thm.nosplit}Let $\mathfrak{g}$ be a non-compact simple Lie algebra with simple system of restricted roots~$\Sigma $ and fix $\Theta\subset \Sigma $. Let $\mathbb{F}_{\Theta }$ be the flag manifold of $\mathfrak{g}$ determined by $\Theta $ and let $\mathbb{F}_{\Theta _{\mathrm{split}}}$ be as above. Then
\begin{gather*}
H_{2}(\mathbb{F}_\Theta,R)=H_{2}(\mathbb{F}_{\Theta _{\mathrm{split}}},R)\oplus \sum_{\alpha \in \Sigma _{2}^{\Theta }}R\cdot S_{r_{\alpha }}.
\end{gather*}
\end{Theorem}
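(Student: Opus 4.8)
The plan is to compute $H_2(\mathbb F_\Theta,R)$ directly from the chain complex $(\mathcal C^\Theta,\partial^{\min}_\Theta)$, using the descriptions of the chain modules and boundary maps already established. First I would identify $\mathcal C_2^\Theta$: combining Proposition~\ref{pro.chains} with the minimality criterion of Lemma~\ref{lm.minimalroots}, the module $\mathcal C_2^\Theta$ is freely spanned by two families of generators, namely $S_{r_\alpha r_\beta}$ with $\alpha\neq\beta\in\Sigma_{\mathrm{split}}$ that are minimal with respect to $\Theta$, together with $S_{r_\alpha}$ for $\alpha\in\Sigma_2^\Theta$ (the rank-$2$ simple roots outside $\Theta$, which are automatically minimal since $\alpha\notin\Theta$). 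I would likewise read off $\mathcal C_1^\Theta$ (spanned by $S_{r_\alpha}$ with $\alpha\in\Sigma_{\mathrm{split}}\setminus\Theta$) and $\mathcal C_3^\Theta$.

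The key structural observation is that the rank-$2$ generators $S_{r_\alpha}$, $\alpha\in\Sigma_2^\Theta$, split off as a direct summand of the whole complex in degree~$2$. By Proposition~\ref{pro.boundarymax}(1) their boundary vanishes, $\partial^{\min}_\Theta S_{r_\alpha}=0$, so each contributes a free generator to $\ker\partial^{\min}_\Theta$ in degree~$2$. I would then argue they receive no boundary from degree~$3$: a $3$-cell mapping onto $S_{r_\alpha}$ would, by the boundary formulas, have to come from a cell of the form $S_{r_\gamma r_\alpha}$ or $S_{r_\alpha r_\gamma}$ that is itself a $3$-cell, but Proposition~\ref{pro.boundarymax}(2) shows such mixed rank cells have zero boundary, and a $3$-cell of the form $S_{r_\beta r_\alpha}$ with both factors split cannot map onto a rank-$2$ cell $S_{r_\alpha}$ since removing reflections from a product of split reflections only yields split cells. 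Hence each $R\cdot S_{r_\alpha}$ with $\alpha\in\Sigma_2^\Theta$ is a direct summand of $H_2(\mathbb F_\Theta,R)$, free of rank one.

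It then remains to show the complementary summand, generated by the split cells $S_{r_\alpha r_\beta}$ with $\alpha\neq\beta\in\Sigma_{\mathrm{split}}$ (and the degree~$1$ and~$3$ split cells), computes exactly $H_2(\mathbb F_{\Theta_{\mathrm{split}}},R)$. The point is that the boundary coefficients $c(w,w')$ of Proposition~\ref{pro.boundarymax}, through the formula~\eqref{forsigmasoma} for $\sigma(w,w')$, depend only on the Cartan integers $\langle\alpha^\vee,\beta\rangle$ and the dimensions $\dim\mathfrak g_\beta$; when all roots involved lie in $\Sigma_{\mathrm{split}}$ these dimensions are all~$1$, so the coefficients coincide with those of the corresponding cells in the flag manifold $\mathbb F_{\Theta_{\mathrm{split}}}$ of the split form $\mathfrak g_{\mathrm{split}}$. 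I would check that the minimality condition (Lemma~\ref{lm.minimalroots}) for a split product $r_\alpha r_\beta$ with respect to $\Theta$ reduces to minimality with respect to $\Theta_{\mathrm{split}}=\Theta\cap\Sigma_{\mathrm{split}}$, since the roots $\beta$ and $r_\beta\alpha$ appearing there are themselves in $\langle\Sigma_{\mathrm{split}}\rangle$. This gives an isomorphism of subcomplexes in degrees $1,2,3$ between the split part of $(\mathcal C^\Theta,\partial^{\min}_\Theta)$ and the complex computing $H_*(\mathbb F_{\Theta_{\mathrm{split}}},R)$, yielding the second summand.

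The main obstacle I anticipate is the bookkeeping of degree~$3$ boundaries into the split part: to confirm the split subcomplex is genuinely a direct summand and not merely a quotient, I must verify that no $3$-cell outside the purely split family (for instance a cell $S_{r_\alpha r_\beta}$ with one rank-$2$ and one split factor, which is a $3$-cell by Proposition~\ref{pro.chains}) contributes a nonzero boundary landing on a split $2$-cell $S_{r_\alpha r_\beta}$. Here I would lean on Proposition~\ref{pro.boundarymax}(2), which states precisely that such mixed $3$-cells $S_{r_\alpha r_\beta}$ have $\partial=0$, together with the observation that removing a reflection from a reduced product can only decrease dimension by the rank of the removed reflection, so that the ranks of the generators on the two sides of the boundary map are compatible and the two families genuinely decouple. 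Once this decoupling is verified, the direct sum decomposition follows.
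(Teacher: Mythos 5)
Your argument is correct and is exactly the argument the paper intends: the paper does not write out a separate proof of Theorem~\ref{thm.nosplit}, stating only that Propositions~\ref{pro.chains} and~\ref{pro.boundarymax} and Lemma~\ref{lm.minimalroots} ``account to'' the decomposition, and your proposal fills in precisely those details (the rank-$2$ cells $S_{r_\alpha}$ form a summand with zero boundary in and out, and the purely split subcomplex in degrees $1$--$3$ is isomorphic to that of $\mathbb{F}_{\Theta_{\mathrm{split}}}$ because the coefficients $c(w,w')$ and the minimality conditions only involve Cartan integers and roots supported in $\Sigma_{\mathrm{split}}$). One cosmetic slip: a cell $S_{r_\beta r_\alpha}$ with both factors split is a $2$-cell, not a $3$-cell, so that subcase is vacuous; the relevant check is simply that a split $3$-letter word can only bound split $2$-letter words, which you also state.
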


\begin{Remark} The boundary operator $\partial _{\Theta }$ of the flag manifolds of a non-compact real simple Lie algebra $\mathfrak{g}$ is basically determined by the homology of its split part $\mathfrak{g}_{\mathrm{split}}$ as pointed out in \cite[p.~18]{RSM}. Theorem~\ref{thm.nosplit} is a particular instance of this fact.
\end{Remark}

\section{Homology groups of flag manifolds of split real forms}\label{sec.comput}

In this section we compute the second homology groups of real flag manifolds associated to \textit{split real simple Lie algebras}. We obtain these homology groups trough the explicit computation of the coefficients $c(w,w^{\prime })$. These coefficients are either zero or $\pm2$, so $\partial=0$ if the characteristic of $R$ is $\operatorname{char}R=2$. Thus in the sequel we assume $\operatorname{char}R\neq 2$.

The maximal flag case (with $\mathfrak{g}$ split) is treated first since it gives the model for the partial flag by restriction of the boundary map to the set of minimal cells. Even though the homology of flag manifolds of type $G_{2}$ is treated in~\cite{RSM}, we consider those here for the sake of completeness of the presentation.

\subsection{Maximal flag manifolds}

The results below make reference to the lines in Dynkin diagrams associated to simple real Lie algebras $\mathfrak{g}$ which are split.

\begin{Lemma}\label{lm.boundary2}Let $\alpha $, $\beta $ be simple roots with $\alpha \neq \beta $. Then $\partial S_{r_{\alpha }r_{\beta }}=\pm 2S_{r_{\beta }}$ if and only if~$\alpha $ and~$\beta $ are either simple linked, or double linked and~$\alpha $ is a long root.
\end{Lemma}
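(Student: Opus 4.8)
The plan is to compute the coefficient $c(r_\alpha r_\beta, r_\beta)$ directly from formula~\eqref{forsigmasoma}, using the fact that for split real forms every root space is one-dimensional, and then read off when it equals $\pm 2$ versus $0$. Since $\mathfrak g$ is split, $\operatorname{rank}\alpha = 1$ for every simple root, so item~3 of the boundary coefficient rules applies and gives $c(r_\alpha r_\beta, r_\beta) = \pm\big(1 - (-1)^{\sigma(r_\alpha r_\beta, r_\beta)}\big)$. The removed reflection is $r_\alpha$ (the reflection indexed $i=1$), so the relevant word is $u = r_\beta$, whence $\Pi_u = \Pi^+ \cap r_\beta\Pi^-$. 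This is a finite, explicitly describable set, and the whole problem reduces to computing $\sigma = \sum_{\delta\in\Pi_u}\langle\alpha^\vee,\delta\rangle\dim\mathfrak g_\delta$ and deciding its parity.

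First I would identify $\Pi_u = \Pi_{r_\beta}$ explicitly. For a simple reflection $r_\beta$ with $\beta$ simple, the only positive root sent to a negative root is $\beta$ itself (in the split case $2\beta$ is never a root), so $\Pi_{r_\beta} = \{\beta\}$. Since $\dim\mathfrak g_\beta = 1$, formula~\eqref{forsigmasoma} collapses to $\sigma(r_\alpha r_\beta, r_\beta) = \langle\alpha^\vee,\beta\rangle$. Therefore $c(r_\alpha r_\beta, r_\beta) = \pm\big(1 - (-1)^{\langle\alpha^\vee,\beta\rangle}\big)$, which is $\pm 2$ exactly when $\langle\alpha^\vee,\beta\rangle$ is odd, and $0$ exactly when it is even. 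So the statement reduces to a purely combinatorial claim about the Cartan integer $\langle\alpha^\vee,\beta\rangle$: it is odd if and only if $\alpha$ and $\beta$ are simple linked, or double linked with $\alpha$ long.

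Next I would run through the possible bonds between two distinct simple roots in a Dynkin diagram and tabulate $\langle\alpha^\vee,\beta\rangle$. If $\alpha,\beta$ are unlinked (orthogonal), $\langle\alpha^\vee,\beta\rangle = 0$, which is even. If they are simple linked, $\langle\alpha^\vee,\beta\rangle = -1$, odd. For a double bond, one has $\langle\alpha^\vee,\beta\rangle\langle\beta^\vee,\alpha\rangle = 2$, so the two Cartan integers are $-1$ and $-2$; the value $-1$ occurs for the long root and $-2$ for the short root, hence $\langle\alpha^\vee,\beta\rangle$ is odd precisely when $\alpha$ is long. For a triple bond (type $G_2$) the integers are $-1$ and $-3$, both odd, but triple bonds lie outside the cases listed in the lemma and I would note that they are not covered by the ``simple linked or double linked'' hypothesis. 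Assembling these cases yields exactly the stated equivalence.

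The main obstacle here is not any single computation but rather being careful about \emph{which} reflection is being removed and \emph{which} orientation convention governs $\langle\alpha^\vee,\beta\rangle$: the coefficient $c(r_\alpha r_\beta, r_\beta)$ comes from deleting $r_\alpha$, so the reflection giving the sign datum is $\alpha^\vee$ acting on the roots of $u=r_\beta$, and one must make sure it is $\langle\alpha^\vee,\beta\rangle$ and not $\langle\beta^\vee,\alpha\rangle$ that appears. Getting this right is what pins down the long/short asymmetry in the double-bond case; swapping the roles of $\alpha$ and $\beta$ would reverse the conclusion. Once the correct Cartan integer is in hand, the parity analysis is immediate from the standard classification of rank-two root systems.
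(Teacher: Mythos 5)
Your proof is correct and follows essentially the same route as the paper: both arguments reduce the claim to the parity of the Cartan integer $\langle \alpha^{\vee},\beta\rangle$ via the boundary coefficient formula (the paper quotes the specialized expression \eqref{eq.cabb} directly, whereas you rederive it from \eqref{forsigmasoma} by observing that $\Pi_{r_\beta}=\{\beta\}$ and $\dim\mathfrak g_\beta=1$), and then both run through the rank-two bond types. Your side remark about the triple bond is apt --- there $\langle\alpha^{\vee},\beta\rangle$ is odd as well, so $\partial S_{r_\alpha r_\beta}=\pm 2S_{r_\beta}$ also holds in type $G_2$ even though that case falls outside the lemma's stated dichotomy; the paper deals with $G_2$ separately in Proposition~\ref{pro.kerg2}.
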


\begin{proof}By Proposition \ref{pro.boundarymax} we have $\partial S_{r_{\alpha}r_{\beta }}=c ( r_{\alpha }r_{\beta },r_{\beta } ) S_{r_{\beta }}$ where
\begin{gather}
c ( r_{\alpha }r_{\beta },r_{\beta } ) =\pm \big( 1-(-1) ^{\langle \alpha ^{\vee },\beta \rangle \dim \mathfrak{g}_{\beta}}\big). \label{eq.cabb}
\end{gather}
Since $\mathfrak{g}$ is a split real form $\dim \mathfrak{g}_{\beta }=1$, thus $c(r_{\alpha }r_{\beta },r_{\beta })=0$ if and only if $\langle \alpha^{\vee },\beta \rangle $ is even, otherwise $c(r_{\alpha }r_{\beta},r_{\beta })=\pm 2$ and $\partial S_{r_{\alpha }r_{\beta }}=\pm 2S_{r_{\beta }}$. We have $\langle \alpha ^{\vee },\beta \rangle $ is even only when $ \langle \alpha ^{\vee },\beta \rangle =0$ or when $\alpha $ and $\beta $ are linked by a double line with $\alpha $ the short root.
\end{proof}

\begin{Corollary}\label{cor.image1}If a diagram has only simple lines then $\partial \colon \mathcal{C}_{2}\longrightarrow \mathcal{C}_{1}$ is surjective. If a diagram has double lines then the image of $\partial \colon \mathcal{C}_{2}\longrightarrow \mathcal{C}_{1}$ is spanned $S_{r_{\beta }}$ with $\beta $ a short simple root, or $\beta $ long simple root such that there exists $\alpha \in \Sigma$ simple linked to $\beta $.
\end{Corollary}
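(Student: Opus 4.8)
The plan is to read off the image of $\partial\colon \mathcal C_2\to\mathcal C_1$ directly from Lemma~\ref{lm.boundary2}, using the explicit bases supplied by Proposition~\ref{pro.chains}. Since $\mathfrak g$ is a split real form, every simple root lies in $\Sigma_{\mathrm{split}}$ and $\Sigma_2=\varnothing$; hence $\mathcal C_1=\bigoplus_{\beta\in\Sigma}R\,S_{r_\beta}$ and $\mathcal C_2=\bigoplus_{\alpha\neq\beta}R\,S_{r_\alpha r_\beta}$. The image is therefore the $R$-submodule generated by the vectors $\partial S_{r_\alpha r_\beta}$, each of which by Lemma~\ref{lm.boundary2} equals either $0$ or $\pm 2S_{r_\beta}$. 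Because every such generator is a scalar multiple of a \emph{single} basis vector $S_{r_\beta}$, no cancellation across different $\beta$ can occur, and the whole problem reduces to deciding, for each fixed simple root $\beta$, whether there exists $\alpha\neq\beta$ with $\partial S_{r_\alpha r_\beta}=\pm 2S_{r_\beta}$. By Lemma~\ref{lm.boundary2} this happens exactly when $\beta$ admits a neighbour $\alpha$ that is either simple linked to it, or double linked to it with $\alpha$ the long root.

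For the first assertion I would argue as follows. When the Dynkin diagram has only simple lines, every edge is a simple link; since $\mathfrak g$ is simple its diagram is connected, so each node $\beta$ has at least one neighbour $\alpha$, which is necessarily simple linked. Lemma~\ref{lm.boundary2} then gives $\partial S_{r_\alpha r_\beta}=\pm 2S_{r_\beta}$, so every basis vector of $\mathcal C_1$ appears in the image up to the factor $2$; this yields the surjectivity claimed (with the caveat on the factor $2$ discussed below).

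For the second assertion I would split the simple roots into long and short. If $\beta$ is short, then any neighbour $\alpha$ already produces $\partial S_{r_\alpha r_\beta}=\pm 2S_{r_\beta}$ — a simple-linked $\alpha$ by the simple-line clause, and a double-linked $\alpha$ because in a double link the short root is $\beta$ and its partner $\alpha$ is long; as $\beta$ has at least one neighbour, $S_{r_\beta}$ lies in the image. If instead $\beta$ is long, a double link to $\beta$ has the \emph{short} root in the role of $\alpha$, so it contributes $0$ rather than $\pm 2S_{r_\beta}$; hence $S_{r_\beta}$ lies in the image precisely when $\beta$ has a simple-linked neighbour. Collecting the two cases gives exactly the spanning set asserted.

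The step I would treat most carefully — and the only genuine subtlety — is the asymmetry of the double-line condition in Lemma~\ref{lm.boundary2}: a double link feeds into $S_{r_\beta}$ only when $\beta$ is the short end. This is what makes a long root whose sole neighbour is a double-line partner (the long simple root of $B_2$, or the long end of $C_n$) fail to enter the image, and it is precisely the feature that separates the two cases of the statement. One should also keep in mind the uniform factor $2$: over a general ring the image is literally $\bigoplus_\beta 2R\,S_{r_\beta}$ over the admissible $\beta$, so ``surjective'' in the first assertion is to be read with $2$ invertible on the coefficients, as is automatic for the real-coefficient cohomology computations that follow.
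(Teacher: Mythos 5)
Your argument is correct and is exactly the deduction the paper intends: the corollary is stated without proof as an immediate consequence of Lemma~\ref{lm.boundary2}, and your case analysis (connectedness for simply laced diagrams; short versus long $\beta$ for double lines, with the asymmetry that a double link contributes only when $\beta$ is the short end) is the same reasoning, just written out. Your remark that the image is really generated by the elements $2S_{r_{\beta}}$, so that ``surjective'' and ``spanned by'' are to be read up to the factor $2$, is a fair reading of the paper's loose phrasing and is consistent with the torsion $R/2R$ appearing in Theorem~\ref{thm.2homfull}.
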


Three simple roots $\alpha$, $\beta$, $\gamma$ are said to be in an $A_3$ configuration (in this order) if they are linked as follows

\begin{picture}(100,25)
\put(40,15){\circle{6}}
\put(37,3){$\alpha $}
\put(43,15){\line(1,0){20}}
\put(66,15){\circle{6}}
\put(63,3){$\beta $}
\put(69,15){\line(1,0){20}}
\put(92,15){\circle{6}}
\put(89,3){$\gamma $}
\end{picture}

In this case, $\partial S_{r_{\alpha }r_{\beta }}=\pm 2S_{r_{\beta }}$ and $\partial S_{r_{\gamma }r_{\beta }}=\pm 2S_{r_{\beta }}$, so there is a (unique) choice of a sign $\eta _{\alpha,\beta,\gamma }=\pm 1$ such that $\partial (S_{r_{\alpha }r_{\beta }}+\eta _{\alpha,\beta,\gamma}S_{r_{\gamma }r_{\beta }})=0$. Therefore, each $A_{3}$ configuration gives an element in the kernel of $\partial\colon \mathcal{C}_{2}\longrightarrow \mathcal{C}_{1}$.

\begin{Proposition}\label{pro.single}If a diagram has only simple lines the kernel of $\partial =\partial _{2}\colon \mathcal{C}_{2}\longrightarrow \mathcal{C}_{1}$ is spanned by the following elements:
\begin{itemize}\itemsep=0pt
\item $S_{r_{\alpha }r_{\beta }}$ with $ \langle \alpha,\beta \rangle =0$ and

\item $S_{r_{\alpha }r_{\beta }}+\,\eta _{\alpha,\beta,\gamma }S_{r_{\gamma }r_{\beta }}$ with $\alpha$, $\beta$, $\gamma $ in an $A_{3}$ configuration (in this order) and $\eta _{\alpha,\beta,\gamma }=\pm 1$ the
sign such that $\partial ( S_{r_{\alpha }r_{\beta }}+\eta _{\alpha,\beta,\gamma }S_{r_{\gamma }r_{\beta }} ) =0$.
\end{itemize}
\end{Proposition}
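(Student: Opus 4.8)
The plan is to compute the kernel of the boundary map $\partial_2$ directly, using the fact that $\mathcal{C}_2$ has an explicit basis and $\partial_2$ has been fully described on that basis by Lemma~\ref{lm.boundary2}. Since the diagram has only simple lines, every $\alpha\in\Sigma$ satisfies $\operatorname{rank}\alpha=1$, so $\Sigma=\Sigma_{\mathrm{split}}$ and there are no rank-$2$ roots. By Proposition~\ref{pro.chains}, $\mathcal{C}_2$ is then freely spanned by the cells $S_{r_\alpha r_\beta}$ with $\alpha\neq\beta\in\Sigma$, and $\mathcal{C}_1$ is freely spanned by the $S_{r_\beta}$ with $\beta\in\Sigma$. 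By Proposition~\ref{pro.boundarymax}(2), $\partial S_{r_\alpha r_\beta}=c(r_\alpha r_\beta,r_\beta)S_{r_\beta}$, and Lemma~\ref{lm.boundary2} tells us exactly when this coefficient is nonzero: for a simply-laced diagram it is $\pm 2$ precisely when $\alpha$ and $\beta$ are linked (i.e.\ $\langle\alpha,\beta\rangle\neq 0$), and $0$ when $\langle\alpha,\beta\rangle=0$.

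First I would separate the basis of $\mathcal{C}_2$ according to the target cell: group the generators $S_{r_\alpha r_\beta}$ by their second index $\beta$. For a fixed $\beta$, the boundary $\partial S_{r_\alpha r_\beta}$ lands in the one-dimensional submodule $R\cdot S_{r_\beta}$, and it vanishes exactly for those $\alpha$ orthogonal to $\beta$ while equalling $\pm 2 S_{r_\beta}$ for those $\alpha$ linked to $\beta$. Thus $\partial_2$ decomposes as a direct sum over $\beta\in\Sigma$ of maps into each line $R\cdot S_{r_\beta}$, and the kernel is the direct sum of the kernels of these pieces. The first family of kernel elements, the $S_{r_\alpha r_\beta}$ with $\langle\alpha,\beta\rangle=0$, is then immediate: these are the basis vectors sent to zero. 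It remains to analyze, for each fixed $\beta$, the finite-rank map sending each linked neighbor $\alpha$ of $\beta$ to $\pm 2 S_{r_\beta}$.

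The core computation is the kernel of this map on the span of $\{S_{r_\alpha r_\beta} : \alpha \text{ linked to }\beta\}$. Since each such generator maps to a nonzero scalar multiple of the single element $S_{r_\beta}$, and $\operatorname{char}R\neq 2$ so that the coefficients $\pm 2$ are non–zero-divisors is not quite enough in general; but over the relevant ring the map onto the rank-one free module $R\cdot S_{r_\beta}$ has a kernel that is free of rank one less than the number of linked neighbors, spanned by the differences $S_{r_\alpha r_\beta}\pm S_{r_\gamma r_\beta}$ between consecutive neighbors with the sign $\eta_{\alpha,\beta,\gamma}$ chosen to cancel the images. Each pair of neighbors $\alpha,\gamma$ of $\beta$ is, together with $\beta$, precisely an $A_3$ configuration, so these difference elements are exactly the second family in the statement. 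I would verify that these differences indeed generate the kernel of the map onto $R\cdot S_{r_\beta}$, and that taking one such relation per adjacent pair of neighbors suffices (a spanning set for the kernel of a surjection from a free module onto a line is given by the differences of scaled generators).

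The main obstacle I anticipate is bookkeeping rather than conceptual: ensuring the claimed list is both spanning and that the two families together remain a clean generating set across all $\beta$. In particular, one must confirm that every kernel element supported on the $\beta$-column with at least three linked neighbors is expressible via the chosen $A_3$ generators, and that no hidden relations arise when assembling the direct sum over all $\beta$. Since the decomposition over $\beta$ is literally a direct sum of independent maps, no cross-column relations can appear, so the global kernel is simply the union of the per-column kernels; this reduces the whole proof to the single-column analysis described above.
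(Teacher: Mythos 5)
Your proposal is correct and follows essentially the same route as the paper: both verify that the listed elements lie in the kernel via Lemma~\ref{lm.boundary2}, decompose a general kernel element according to the second reflection $r_{\beta}$, and identify the kernel over each fixed $\beta$ as the span of signed differences of cells indexed by the neighbors of $\beta$ (the paper phrases the three-neighbor case as an explicit $D_{4}$ computation, which is just the $m=3$ instance of your uniform rank count, and it glosses over the zero-divisor issue for $2\in R$ exactly as you do). Two small points worth tightening: $\operatorname{rank}\alpha=1$ follows from the standing split hypothesis of this section, not from the diagram having only single bonds; and your unstated claim that any two neighbors of $\beta$ together with $\beta$ form an $A_{3}$ configuration holds because Dynkin diagrams are trees, so two neighbors of a common node are never linked to each other.
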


\begin{proof} Lemma~\ref{lm.boundary2} and the reasoning above show that the elements in the statement are indeed in the kernel of $\partial \colon \mathcal{C}_{2}\longrightarrow \mathcal{C}_{1}$. One should see that these are the only generators.

The boundary $\partial S_{r_{\alpha }r_{\beta }}$ is a multiple of $S_{r_{\beta }}$ hence an element of $\ker \partial _{2}$ is a sum of linear combinations of $2$-cells of the form
\begin{gather*}
\sum_{j\in J}n_{j}\,S_{r_{\alpha _{j}}r_{\beta }},\qquad\mbox{with} \quad n_{j}\in R \quad \mbox{for all} \ j\in J.
\end{gather*}
In such a linear combination we can take $\alpha _{j}$ not orthogonal to $\beta $ for all $j$ because $\partial S_{r_{\alpha }r_{\beta }}=0$ if $\langle \alpha,\beta \rangle =0$.

In a simply laced diagram a set of roots $\{\alpha_{j},\beta \}$ appearing in a linear combination as above $\langle \alpha _{j},\beta \rangle \neq 0$ occurs only in an $A_{3}$ configuration (with $\beta $ the middle root) or if the roots are in a~$D_{4}$ configuration as follows

\vspace{14mm}

\begin{picture}(100,25)

\put(66,41){\circle{6}}
\put(63,53){$\alpha_2 $}
\put(66,18){\line(0,1){20}}
\put(40,15){\circle{6}}
\put(37,3){$\alpha_1 $}
\put(43,15){\line(1,0){20}}
\put(66,15){\circle{6}}
\put(63,3){$\beta $}
\put(69,15){\line(1,0){20}}
\put(92,15){\circle{6}}
\put(89,3){$\alpha_3 $}

\end{picture}

A linear combination $n_{1}S_{r_{\alpha _{1}}r_{\beta}}+n_{2}S_{r_{\alpha _{2}}r_{\beta }}+n_{3}S_{r_{\alpha _{3}}r_{\beta }}\in \ker \partial $ associated to a $D_{4}$ configuration belongs to the span of the combinations $S_{r_{\alpha }r_{\beta }}+\eta _{\alpha,\beta,\gamma}S_{r_{\gamma }r_{\beta }}$ given by $A_{3}$ configurations where as above $\eta _{\alpha,\beta,\gamma }=\pm 1$ is the sign such that $\partial (S_{r_{\alpha }r_{\beta }}+\eta _{\alpha,\beta,\gamma }S_{r_{\gamma}r_{\beta }}) =0$. In fact, suppose first that $\partial S_{r_{\alpha _{i}}r_{\beta}}=\partial S_{r_{\alpha _{j}}r_{\beta }}$ for all $i\neq j$, that is, $\eta_{\alpha _{1},\beta,\alpha _{2}}=\eta _{\alpha _{1},\beta,\alpha_{3}}=\eta _{\alpha _{2},\beta,\alpha _{3}}=-1$. Then
\begin{gather*}
\partial \big( n_{1}S_{r_{\alpha _{1}}r_{\beta }}+n_{2}S_{r_{\alpha_{2}}r_{\beta }}+n_{3}S_{r_{\alpha _{3}}r_{\beta }}\big) =0
\end{gather*}
is the same as $2 ( n_{1}+n_{2}+n_{3} ) =0$ so that $n_{3}=-(n_{1}+n_{2})$ because $\operatorname{char}R\neq 2$. Hence
\begin{align*}
\sum_{i=1}^{3}n_{i}S_{r_{\alpha _{i}}r_{\beta }} &= n_{1}(S_{r_{\alpha_{1}}r_{\beta }}-S_{r_{\alpha _{3}}r_{\beta }})+n_{2}(S_{r_{\alpha_{2}}r_{\beta }}-S_{r_{\alpha _{3}}r_{\beta }}) \\
&=n_{1}(S_{r_{\alpha _{1}}r_{\beta }}+\eta _{\alpha _{1},\beta,\alpha_{3}}S_{r_{\alpha _{3}}r_{\beta }})+n_{2}(S_{r_{\alpha _{2}}r_{\beta }}+\eta_{\alpha _{2},\beta,\alpha _{3}}S_{r_{\alpha _{3}}r_{\beta }})
\end{align*}
as we wanted to show. If the images $\partial S_{r_{\alpha _{1}}r_{\beta }}$ have two coincident signs and one opposite, a~similar argument gives $\sum\limits_{i=1}^{3}n_{i}S_{r_{\alpha _{i}}r_{\beta }}$ as a combination of the elements in the kernel given by the~$A_{3}$ subdiagrams.
\end{proof}

In a diagram with double lines, another configuration becomes relevant. The roots $\alpha$, $\beta$, $\gamma$ are said to be in a $C_3$ configuration if they are linked as follows

\begin{picture}(100,25)
\put(40,15){\circle{6}}
\put(37,3){$\alpha $}
\put(43,15){\line(1,0){20}}
\put(66,15){\circle{6}}
\put(63,3){$\beta $}
\put(69,15){\line(1,2){5}}
\put(69,15){\line(1,-2){5}}
\put(69,16.2){\line(1,0){20}}
\put(69,13.8){\line(1,0){20}}
\put(92,15){\circle{6}}
\put(89,3){$\gamma $}
\end{picture}

As in an $A_{3}$ configuration, $\partial S_{r_{\alpha }r_{\beta }}=\pm 2S_{r_{\beta }}$ and $\partial S_{r_{\gamma }r_{\beta }}=\pm 2S_{r_{\beta }}$ because $\beta $ is the short root in the double link with $\gamma $. Hence there is a (unique) sign $\eta_{\alpha,\beta,\gamma }=\pm 1$ such that $\partial (S_{r_{\alpha }r_{\beta }}+\eta _{\alpha,\beta,\gamma}S_{r_{\gamma }r_{\beta }})=0$.

\begin{Proposition}\label{pro.double}If a diagram has double lines, then the kernel of $\partial \colon \mathcal{C}_{2}\longrightarrow \mathcal{C}_{1}$ is spanned by~$S_{r_{\alpha }r_{\beta }}$ with $\langle \alpha,\beta \rangle =0$ and $S_{r_{\alpha }r_{\beta }}+\eta _{\alpha,\beta,\gamma }S_{r_{\gamma }r_{\beta }}$ with $\alpha$, $\beta$, $\gamma $ in an $A_{3}$ configuration, as in the previous proposition together with the following generators:
\begin{itemize}\itemsep=0pt
\item $S_{r_{\alpha }r_{\beta }}$ with $\alpha \neq \beta $ double linked and $\alpha $ the short root and
\item $S_{r_{\alpha }r_{\beta }}+\,\mu _{\alpha,\beta,\gamma }S_{r_{\gamma}r_{\beta }}$ with $\alpha$, $\beta$, $\gamma $ in a $C_{3}$ configuration $($in this order$)$ and $\mu _{\alpha,\beta,\gamma }=\pm 1$ the sign such that $\partial ( S_{r_{\alpha }r_{\beta }}+\eta _{\alpha,\beta,\gamma}S_{r_{\gamma }r_{\beta }} ) =0$.
\end{itemize}
\end{Proposition}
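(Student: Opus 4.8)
The plan is to run the same column-by-column argument used in the proof of Proposition~\ref{pro.single}, the only new ingredients being the two boundary patterns allowed by a double bond (Lemma~\ref{lm.boundary2}). By Proposition~\ref{pro.boundarymax}(2) every $\partial S_{r_\alpha r_\beta}$ is a multiple of the single $1$-cell $S_{r_\beta}$, so I would group the generators of $\mathcal{C}_2$ according to their inner reflection, writing $\mathcal{C}_2=\bigoplus_\beta V_\beta$ with $V_\beta=\spam_R\{S_{r_\alpha r_\beta}\colon \alpha\neq\beta\}$. Then $\partial(V_\beta)\subseteq R\,S_{r_\beta}$, and since distinct columns land in the distinct, linearly independent $1$-cells $S_{r_\beta}$ (all $\beta\in\Sigma=\Sigma_{\mathrm{split}}$ give $1$-cells in the split case), one gets $\ker\partial_2=\bigoplus_\beta\ker(\partial|_{V_\beta})$. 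It therefore suffices to describe each column separately.

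Fix $\beta$ and, using Lemma~\ref{lm.boundary2}, split the generators of $V_\beta$ into the \emph{contributing} set $A_\beta=\{\alpha\colon \partial S_{r_\alpha r_\beta}=\pm 2S_{r_\beta}\}$ --- those $\alpha$ simply linked to $\beta$, together with those doubly linked to $\beta$ with $\alpha$ the long root --- and the remaining generators, which satisfy $\partial S_{r_\alpha r_\beta}=0$. The latter are exactly the orthogonal cells ($\langle\alpha,\beta\rangle=0$) and the doubly linked cells with $\alpha$ short; they lie in $\ker\partial_2$ outright and account for the first and third families in the statement. On the contributing part, $\sum_{\alpha\in A_\beta}n_\alpha S_{r_\alpha r_\beta}\in\ker\partial$ iff $\sum_{\alpha\in A_\beta}n_\alpha\varepsilon_\alpha=0$, where $\partial S_{r_\alpha r_\beta}=2\varepsilon_\alpha S_{r_\beta}$ with $\varepsilon_\alpha=\pm1$; here $\operatorname{char}R\neq 2$ is essential. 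For $|A_\beta|=m$ this kernel is free of rank $m-1$, spanned by the pairwise relations $S_{r_\alpha r_\beta}+\eta S_{r_{\alpha'}r_\beta}$ over pairs $\alpha,\alpha'\in A_\beta$, with $\eta=\pm1$ chosen so the boundary vanishes.

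The combinatorial crux is then to read off which pairs $\{\alpha,\alpha'\}\subseteq A_\beta$ actually occur, and this is where I expect the only real work to lie. Since $\mathfrak{g}$ is simple and its diagram contains a double bond, that diagram is one of $B_n$, $C_n$, $F_4$, each of which is a \emph{path}; hence every vertex $\beta$ has at most two neighbours and $|A_\beta|\leq 2$. This is the decisive simplification over the simply-laced case: there is no branch vertex, so no analogue of the $D_4$ reduction from Proposition~\ref{pro.single} is needed, and the pairwise relations already form a basis. When $|A_\beta|=2$ the two contributing neighbours are either both simply linked to $\beta$, giving an $A_3$ configuration and the relation with sign $\eta_{\alpha,\beta,\gamma}$, or one is simply and the other doubly linked; the latter forces $\beta$ short with its doubly linked neighbour long, which is precisely a $C_3$ configuration and yields the relation with sign $\mu_{\alpha,\beta,\gamma}$. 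Summing the columns over $\beta$ assembles exactly the four families of generators in the statement; the only point requiring care is the short case-check verifying $|A_\beta|\le 2$ and matching each two-neighbour column to the correct $A_3$ or $C_3$ shape, which is routine given that the relevant diagrams are explicit and linear.
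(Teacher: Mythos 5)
Your proof is correct and takes essentially the same approach as the paper's: the paper likewise reduces to a fixed column $\sum_j n_j S_{r_{\alpha_j}r_\beta}$, discards the cells with vanishing boundary (orthogonal pairs and double links with $\alpha$ short), and matches the remaining neighbours of $\beta$ to $A_3$, $C_3$ or $D_4$ configurations. Your observation that the $D_4$ case is vacuous because the diagrams with double lines ($B_n$, $C_n$, $F_4$) are paths is a small simplification over the paper, which instead disposes of $D_4$ by reducing it to $A_3$ combinations as in Proposition~\ref{pro.single}.
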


\begin{proof} The proof of this proposition is the same as the previous one. The elements in the previous proposition and those in the statement belong to the kernel of $\partial \colon \mathcal{C}_{2}\longrightarrow \mathcal{C}_{1}$.

On the other hand for a linear combination of $2$-dimensional cells in $\ker \partial $ we can assume that it has the form $\sum\limits_{j\in J}n_{j}S_{r_{\alpha _{j}}r_{\beta }}$ with $\alpha _{j}$ not orthogonal to $\beta $ for all $j$. Again, the only possibilities is that $\beta $ and $\alpha _{i}^{\prime }s$ are in a $C_{3}$, $A_{3}$ or a $D_{4}$ configuration. The last case is in fact a linear combination of $A_{3}$ configurations so the result follows.
\end{proof}

With the next proposition complete the computation of the kernel of $\partial \colon \mathcal{C}_{2}\longrightarrow \mathcal{C}_{1}$ by considering the Lie algebra $G_{2}$.

\begin{Proposition}\label{pro.kerg2}If the diagram is of type $G_{2}$ then the kernel of $\partial \colon \mathcal{C}_{2}\longrightarrow \mathcal{C}_{1}$ is zero.
\end{Proposition}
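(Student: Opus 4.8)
The plan is to reduce the statement to a direct two-generator computation, exploiting that $G_2$ has rank two. Write $\Sigma=\{\alpha,\beta\}$ with $\alpha$ the short and $\beta$ the long simple root, joined by a triple line. Since $G_2$ is split, $\dim\mathfrak{g}_\alpha=\dim\mathfrak{g}_\beta=1$, so $\Sigma_{\mathrm{split}}=\{\alpha,\beta\}$ and $\Sigma_2=\varnothing$. By Proposition~\ref{pro.chains}, $\mathcal{C}_1$ is then the free $R$-module on $S_{r_\alpha},S_{r_\beta}$, while $\mathcal{C}_2$ is the free $R$-module on $S_{r_\alpha r_\beta}$ and $S_{r_\beta r_\alpha}$; indeed these are the only two reduced words of length two in a rank-two Weyl group. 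Thus $\partial\colon\mathcal{C}_2\to\mathcal{C}_1$ is a map between free modules of rank two, and I only need its values on these two generators.

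Next I would evaluate the two boundary coefficients. By Proposition~\ref{pro.boundarymax}(2), $\partial S_{r_\alpha r_\beta}=c(r_\alpha r_\beta,r_\beta)S_{r_\beta}$ and, symmetrically, $\partial S_{r_\beta r_\alpha}=c(r_\beta r_\alpha,r_\alpha)S_{r_\alpha}$. Formula~\eqref{eq.cabb}, together with $\dim\mathfrak{g}_\alpha=\dim\mathfrak{g}_\beta=1$, shows that these coefficients vanish precisely when the Cartan integers $\langle\alpha^\vee,\beta\rangle$, respectively $\langle\beta^\vee,\alpha\rangle$, are even. For $G_2$ one has $\langle\alpha^\vee,\beta\rangle=-3$ and $\langle\beta^\vee,\alpha\rangle=-1$, both odd; hence $\partial S_{r_\alpha r_\beta}=\pm2S_{r_\beta}$ and $\partial S_{r_\beta r_\alpha}=\pm2S_{r_\alpha}$, and neither coefficient is zero. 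This is exactly the feature that distinguishes the triple line of $G_2$ from the double-line situation of Lemma~\ref{lm.boundary2}, where the short-to-long integer is even.

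Finally I would read off the kernel. For $x=mS_{r_\alpha r_\beta}+nS_{r_\beta r_\alpha}$ one gets $\partial x=\pm2m\,S_{r_\beta}\pm2n\,S_{r_\alpha}$, and since $\{S_{r_\alpha},S_{r_\beta}\}$ is a basis of the free module $\mathcal{C}_1$, the condition $\partial x=0$ forces $2m=0$ and $2n=0$. As $2$ is not a zero-divisor in $R$ under the standing assumption $\operatorname{char}R\neq2$, this gives $m=n=0$, so $\ker\partial=\{0\}$, as claimed.

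I expect no real obstacle in this proof: the entire content sits in correctly computing the two Cartan integers of $G_2$ and observing that both are odd, which is what makes both boundary coefficients nonzero and kills the kernel (unlike the $A_3$ and $C_3$ configurations of the preceding propositions, where two incident cells map to the same $S_{r_\beta}$ and thereby produce kernel elements). The only point deserving a moment of care is the passage from $2m=0$ to $m=0$, which uses that $2$ is a non-zero-divisor rather than merely $\operatorname{char}R\neq2$; for the rings of interest here the two coincide.
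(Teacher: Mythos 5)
Your proof is correct and follows essentially the same route as the paper: identify the two generators $S_{r_\alpha r_\beta}$, $S_{r_\beta r_\alpha}$ of $\mathcal{C}_2$, compute the Cartan integers $-3$ and $-1$ of $G_2$, observe both are odd so that both boundary coefficients equal $\pm 2$, and conclude the kernel vanishes. Your explicit final step (and the honest caveat that $2m=0\Rightarrow m=0$ really uses that $2$ is a non-zero-divisor, not just $\operatorname{char}R\neq 2$) is left implicit in the paper but is consistent with how the paper argues elsewhere, e.g., in Proposition~\ref{pro.single}.
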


\begin{proof}Recall that $G_{2}$ has two simple roots $\alpha _{1},\alpha _{2}$ linked by a triple line. Thus $S_{r_{\alpha _{1}}r_{\alpha _{2}}}$ and $S_{r_{\alpha_{2}}r_{\alpha _{1}}}$ span $\mathcal{C}_{2}$. These roots satisfy $\langle \alpha _{1}^{\vee },\alpha _{2}\rangle =-1$ and $\langle \alpha _{2}^{\vee },\alpha _{1}\rangle =-3$. Then by (\ref{forsigmasoma}), $\sigma (r_{\alpha _{i}}r_{\alpha _{j}},r_{\alpha_{j}})=\langle \alpha _{i}^{\vee },\alpha _{j}\rangle $ is odd for $1\leq i\neq j\leq 2$and thus $c(r_{\alpha _{i}}r_{\alpha_{j}},r_{\alpha _{j}})=\pm 2$. Therefore $\partial S_{r_{\alpha _{1}}r_{\alpha _{2}}}=\pm 2S_{r_{\alpha _{2}}}$ and $\partial S_{r_{\alpha _{2}}r_{\alpha _{1}}}=\pm 2S_{r_{\alpha _{1}}}$.
\end{proof}

Next we focus on the computation of the boundaries of the $3$-dimensional cells, that is, the image of $\partial\colon \mathcal{C}_{3}\longrightarrow \mathcal{C}_{2}$. Such cells are of the form $S_{r_{\alpha }r_{\beta
}r_{\gamma }}$ with $\alpha \neq \beta \neq \gamma $ ($\alpha =\gamma $ is allowed) whose boundaries are given by Proposition~\ref{pro.boundarymax}:
\begin{gather*}
\partial S_{r_{\alpha }r_{\beta }r_{\gamma }}=c ( r_{\alpha }r_{\beta}r_{\gamma },r_{\alpha }r_{\gamma } ) S_{r_{\alpha }r_{\gamma}}+c ( r_{\alpha }r_{\beta }r_{\gamma },r_{\beta }r_{\gamma } )S_{r_{\beta }r_{\gamma }}
\end{gather*}
with $c ( r_{\alpha }r_{\beta }r_{\gamma },r_{\alpha }r_{\gamma } ) =0$ in the case $\gamma =\alpha $. The coefficients have the form $\pm ( 1-(-1) ^{\sigma } ) $, where~$\sigma $ is as in~(\ref{forsigmasoma}). Namely,
\begin{itemize}\itemsep=0pt
\item $\sigma ( r_{\alpha }r_{\beta }r_{\gamma },r_{\alpha }r_{\gamma} ) =\langle \beta ^{\vee },\gamma \rangle $, for $\gamma \neq \alpha $, since the only positive root taken to negative by $r_{\gamma }$ is $\gamma
$ itself. Thus
\begin{gather}
c ( r_{\alpha }r_{\beta }r_{\gamma },r_{\alpha }r_{\gamma } ) =\pm \big( 1-(-1) ^{\langle \beta ^{\vee },\gamma \rangle }\big).\label{eq.cabcac}
\end{gather}
\item $\sigma ( r_{\alpha }r_{\beta }r_{\gamma },r_{\beta }r_{\gamma} ) =\langle \alpha ^{\vee },\beta \rangle +\langle \alpha ^{\vee},r_{\beta }\gamma \rangle $ since the positive roots taken to negative by $r_{\gamma }r_{\beta }= ( r_{\beta }r_{\gamma } ) ^{-1}$ are $\beta $ and $r_{\beta }\gamma $. Therefore,
\begin{gather} \label{eq.cabcbc}
c ( r_{\alpha }r_{\beta }r_{\gamma },r_{\beta }r_{\gamma } ) =\pm \big( 1-(-1) ^{\langle \alpha ^{\vee },\beta \rangle +\langle \alpha ^{\vee },r_{\beta }\gamma \rangle }\big).
\end{gather}
\end{itemize}

\begin{Proposition}\label{pro.bound3bab}\label{pro.bound3ort}Let $\alpha,\beta,\gamma \in \Sigma $.
\begin{enumerate}\itemsep=0pt
\item[$1.$] If $\alpha$ and $\beta$ are linked by a double line and $\alpha$ is a short root then
\begin{gather*}
\partial S_{ r_{\beta }r_{\alpha }r_{\beta }} =\pm 2S_{ r_{\alpha }r_{\beta}}.
\end{gather*}
\item[$2.$] If $\langle\alpha,\beta\rangle=0$, $\beta$ is long and $\gamma$ is such that $ \langle \gamma^\vee,\alpha \rangle=-1$ then
\begin{gather*}
\partial S_{ r_{\gamma}r_{\alpha }r_{\beta }} =\pm 2S_{ r_{\alpha }r_{\beta}}.
\end{gather*}
\item[$3.$] If $\langle \alpha,\beta \rangle =0$ and $\langle \gamma ^{\vee},\beta \rangle =-1$ then
\begin{gather*}
\partial S_{ r_{\alpha }r_{\gamma }r_{\beta }} =\pm 2S_{ r_{\alpha }r_{\beta}}.
\end{gather*}
\end{enumerate}
\end{Proposition}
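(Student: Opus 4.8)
The plan is to read each boundary off Proposition~\ref{pro.boundarymax} and then evaluate the two relevant coefficients $c(\cdot,\cdot)$ by means of the parity formulas \eqref{eq.cabcac} and \eqref{eq.cabcbc}. Since $\mathfrak{g}$ is split we have $\dim\mathfrak{g}_{\delta}=1$ for every root $\delta$, so the exponent $\sigma$ of \eqref{forsigmasoma} collapses to a sum of Cartan integers $\langle\cdot^{\vee},\cdot\rangle$; the whole problem thus reduces to deciding the parity of that sum, a given coefficient being $0$ when $\sigma$ is even and $\pm2$ when $\sigma$ is odd. The only root-theoretic identities I will use repeatedly are $r_{\delta}\epsilon=\epsilon-\langle\epsilon,\delta^{\vee}\rangle\delta$ and $\langle\delta^{\vee},\delta\rangle=2$.

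For the first statement the word $r_{\beta}r_{\alpha}r_{\beta}$ has the braid shape treated in the third formula of Proposition~\ref{pro.boundarymax} (with $\alpha$ and $\beta$ interchanged), so deleting the middle reflection gives a non-reduced word and deleting the last one contributes $0$; hence only $\partial S_{r_{\beta}r_{\alpha}r_{\beta}}=c(r_{\beta}r_{\alpha}r_{\beta},r_{\alpha}r_{\beta})\,S_{r_{\alpha}r_{\beta}}$ survives. By \eqref{eq.cabcbc} (relabelled) the exponent is $\sigma=\langle\beta^{\vee},\alpha\rangle+\langle\beta^{\vee},r_{\alpha}\beta\rangle$. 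Since $\alpha$ is short and $\beta$ long across a double bond, $\langle\beta^{\vee},\alpha\rangle=-1$ and $r_{\alpha}\beta=\beta+2\alpha$, so $\langle\beta^{\vee},r_{\alpha}\beta\rangle=2-2=0$ and $\sigma=-1$ is odd; the coefficient is therefore $\pm2$.

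For the last two statements the cell is of the generic form of the fourth formula of Proposition~\ref{pro.boundarymax}, so the boundary has two terms, one from deleting the middle reflection and one from deleting the first. In statement~3, where $w=r_{\alpha}r_{\gamma}r_{\beta}$, deleting the middle $r_{\gamma}$ yields $S_{r_{\alpha}r_{\beta}}$ with coefficient $\pm\big(1-(-1)^{\langle\gamma^{\vee},\beta\rangle}\big)=\pm2$ by \eqref{eq.cabcac}, as $\langle\gamma^{\vee},\beta\rangle=-1$ is odd; deleting the first $r_{\alpha}$ yields $S_{r_{\gamma}r_{\beta}}$, and here \eqref{eq.cabcbc} together with $\langle\alpha,\beta\rangle=0$ and $r_{\gamma}\beta=\beta+\gamma$ gives $\sigma=\langle\alpha^{\vee},\gamma\rangle+\langle\alpha^{\vee},r_{\gamma}\beta\rangle=2\langle\alpha^{\vee},\gamma\rangle$, which is even, so that term drops and only $\pm2\,S_{r_{\alpha}r_{\beta}}$ remains. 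Statement~2, with $w=r_{\gamma}r_{\alpha}r_{\beta}$, is organised identically: deleting the middle $r_{\alpha}$ gives coefficient $\pm\big(1-(-1)^{\langle\alpha^{\vee},\beta\rangle}\big)=0$ because $\langle\alpha,\beta\rangle=0$, killing the $S_{r_{\gamma}r_{\beta}}$ term, while deleting the first $r_{\gamma}$ gives, via \eqref{eq.cabcbc} and $r_{\alpha}\beta=\beta$, the exponent $\sigma=\langle\gamma^{\vee},\alpha\rangle+\langle\gamma^{\vee},\beta\rangle=-1+\langle\gamma^{\vee},\beta\rangle$.

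I expect statement~2 to be the only delicate point. Unlike statement~3, where the unwanted exponent is forced to be even for purely formal reasons, here the surviving exponent depends on $\langle\gamma^{\vee},\beta\rangle$, and I must argue this is even so that $\sigma$ is odd and the coefficient equals $\pm2$. This is exactly where the hypothesis that $\beta$ is long is used: one checks that, for a root $\gamma$ with $\langle\gamma^{\vee},\alpha\rangle=-1$ in the configurations relevant here, $\gamma$ can meet the long root $\beta$ only through $\langle\gamma^{\vee},\beta\rangle\in\{0,-2\}$ (either $\gamma\perp\beta$ or $\gamma$ is the short end of a double bond into $\beta$), both even, whence $\sigma$ is odd and $\partial S_{r_{\gamma}r_{\alpha}r_{\beta}}=\pm2\,S_{r_{\alpha}r_{\beta}}$. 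Establishing this dichotomy from the linking hypotheses and the length of $\beta$ is the main obstacle; the remaining arithmetic is routine.
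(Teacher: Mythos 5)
Your computations coincide with the paper's: each item is read off Proposition~\ref{pro.boundarymax}, the two candidate coefficients are evaluated through the parity of the exponents in \eqref{eq.cabcac} and \eqref{eq.cabcbc}, and your evaluations in items~1 and~3 (the exponent $-1$ for $\sigma(r_{\beta}r_{\alpha}r_{\beta},r_{\alpha}r_{\beta})$, and the even exponent $2\langle\alpha^{\vee},\gamma\rangle$ killing the $S_{r_{\gamma}r_{\beta}}$ term in item~3) are exactly the ones in the paper, so those two items are complete. The one divergence is in item~2, and it is precisely the point you flag as the main obstacle: the paper reduces to $\sigma(r_{\gamma}r_{\alpha}r_{\beta},r_{\alpha}r_{\beta})=-1+\langle\gamma^{\vee},\beta\rangle$ just as you do, but then simply asserts this is odd with no further argument, whereas you correctly observe that one must know $\langle\gamma^{\vee},\beta\rangle$ is even. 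Be aware that the dichotomy you propose ($\gamma\perp\beta$, or $\gamma$ the short end of a double bond into $\beta$, so $\langle\gamma^{\vee},\beta\rangle\in\{0,-2\}$) does not follow from the stated hypotheses alone: if $\gamma$ and $\beta$ are two adjacent long roots, as happens in type $B_{\ell}$, then $\langle\gamma^{\vee},\beta\rangle=-1$, the exponent is even, and the claimed boundary formula fails. What rescues the statement is the context in which it is invoked: in the proof of Theorem~\ref{thm.2homfull}, item~2 is used only for $\mathfrak{g}$ of type $C_{\ell}$ with $\beta=\alpha_{\ell}$ the unique long root, so every other simple root is short and $\langle\gamma^{\vee},\beta\rangle$ is automatically $0$ or $-2$. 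So the obstacle you name is genuine with respect to the proposition's literal hypotheses, but it is shared by the paper's own proof and is closed by the restricted setting in which the item is applied rather than by any additional argument you are missing.
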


\begin{proof}If $\alpha $ and $\beta $ are as in (1) then $\langle \beta ^{\vee},\alpha \rangle =-1$ and $\langle \alpha ^{\vee },\beta\rangle =-2$. Hence
\begin{align*}
\sigma ( r_{\beta }r_{\alpha }r_{\beta },r_{\alpha }r_{\beta } )&=\langle \beta ^{\vee },\alpha \rangle +\langle \beta ^{\vee },r_{\alpha}\beta \rangle =\langle \beta ^{\vee },\alpha \rangle +\langle \beta ^{\vee },\beta
+2\alpha \rangle \\
&=3\langle \beta ^{\vee },\alpha \rangle +\langle \beta ^{\vee },\beta\rangle =-1,
\end{align*}
which is odd. Therefore $c ( r_{\beta }r_{\alpha }r_{\beta },r_{\alpha}r_{\beta } ) =\pm 2$ showing the first assertion.

For the second item, $ \langle \alpha ^{\vee },\beta \rangle =0$ implies $\partial S_{r_{\gamma }r_{\alpha }r_{\beta }}=c(r_{\gamma}r_{\alpha }r_{\beta },r_{\alpha }r_{\beta })S_{r_{\alpha }r_{\beta }}$ which is non-zero since
\begin{gather*}
\sigma (r_{\gamma }r_{\alpha }r_{\beta },r_{\alpha }r_{\beta })= \langle \gamma ^{\vee },\alpha \rangle + \langle \gamma ^{\vee },\beta \rangle =-1+ \langle \gamma ^{\vee },\beta \rangle
\end{gather*}
is odd.

For the last assertion, the assumption $\langle \gamma ^{\vee },\beta \rangle =-1$ implies that removing from $r_{\alpha }r_{\gamma }r_{\beta }$ the reflection $r_{\gamma }$ one has
\begin{gather*}
c ( r_{\alpha }r_{\gamma }r_{\beta },r_{\alpha }r_{\beta } ) =\pm 2
\end{gather*}%
by (\ref{forsigmasoma}). On the other hand, the exponent reached when removing the first reflection $r_{\alpha }$ is
\begin{gather*}
\sigma ( r_{\alpha }r_{\gamma }r_{\beta },r_{\gamma }r_{\beta } ) =\langle \alpha ^{\vee },\gamma +r_{\gamma }\beta \rangle =\langle \alpha,2\gamma +\beta \rangle =2\langle \alpha,\gamma \rangle,
\end{gather*}
which is even. Hence $c ( r_{\alpha }r_{\gamma }r_{\beta },r_{\gamma}r_{\beta } ) =0$ and $\partial S_{r_{\alpha }r_{\gamma }r_{\beta}}=\pm 2S_{r_{\alpha }r_{\beta }}$.
\end{proof}

\begin{Proposition}\label{pro.bound3A3}Let $\alpha,\beta,\gamma \in \Sigma $ in an $A_{3}$ or a $C_{3}$ configuration. Then
\begin{gather*}
\partial S_{r_{\alpha }r_{\gamma }r_{\beta }}=\pm 2\big( S_{r_{\alpha}r_{\beta }}+\eta _{\alpha,\beta,\gamma }S_{r_{\gamma }r_{\beta }}\big),
\end{gather*}
where $\eta _{\alpha,\beta,\gamma }$ is the sign such that $\partial ( S_{r_{\alpha }r_{\beta }}+\eta _{\alpha,\beta,\gamma }S_{r_{\gamma}r_{\beta }}) =0$.
\end{Proposition}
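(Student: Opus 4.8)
The plan is to read off the boundary from the two-term formula for $3$-cells established in Proposition~\ref{pro.boundarymax}(4), to evaluate each of the two coefficients explicitly in both the $A_3$ and the $C_3$ configuration, and then to fix the relative sign of the two terms by invoking $\partial^2=0$ together with Lemma~\ref{lm.boundary2}. The point of treating the two configurations simultaneously is that, once the relevant Cartan integers are recorded, the computation is literally the same.

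First I would apply Proposition~\ref{pro.boundarymax}(4) to the cell $S_{r_\alpha r_\gamma r_\beta}$, obtaining
\[
\partial S_{r_\alpha r_\gamma r_\beta}=c(r_\alpha r_\gamma r_\beta,r_\alpha r_\beta)S_{r_\alpha r_\beta}+c(r_\alpha r_\gamma r_\beta,r_\gamma r_\beta)S_{r_\gamma r_\beta},
\]
the first term arising from deleting the middle reflection $r_\gamma$ and the second from deleting $r_\alpha$. The task reduces to showing that both coefficients equal $\pm 2$. Using formulas~(\ref{eq.cabcac}) and~(\ref{eq.cabcbc}) after the relabelling $\beta\leftrightarrow\gamma$, the exponent controlling the first coefficient is $\langle\gamma^\vee,\beta\rangle$, while the exponent controlling the second is $\langle\alpha^\vee,\gamma\rangle+\langle\alpha^\vee,r_\gamma\beta\rangle$. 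In both an $A_3$ chain $\alpha-\beta-\gamma$ and a $C_3$ chain (with $\beta$ the short root and $\gamma$ the long one) one has $\langle\gamma^\vee,\beta\rangle=-1$, $\alpha\perp\gamma$, and hence $r_\gamma\beta=\beta+\gamma$; so the first exponent is $-1$ and the second reduces to $\langle\alpha^\vee,\beta\rangle=-1$. Both being odd, each coefficient is $\pm 2$ by~(\ref{forsigmasoma}), and we may write
\[
\partial S_{r_\alpha r_\gamma r_\beta}=2\epsilon_1 S_{r_\alpha r_\beta}+2\epsilon_2 S_{r_\gamma r_\beta},\qquad \epsilon_1,\epsilon_2\in\{\pm 1\}.
\]

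Finally I would determine the ratio $\epsilon_2/\epsilon_1$ by applying $\partial$ once more. By Lemma~\ref{lm.boundary2}, in each configuration both $\partial S_{r_\alpha r_\beta}$ and $\partial S_{r_\gamma r_\beta}$ are nonzero multiples $\pm 2S_{r_\beta}$: for $\alpha-\beta$ this is the simple-link case, and for $\gamma-\beta$ in $C_3$ it is the double-link case with $\gamma$ long. Writing $\partial S_{r_\alpha r_\beta}=2a\,S_{r_\beta}$ and $\partial S_{r_\gamma r_\beta}=2b\,S_{r_\beta}$ with $a,b\in\{\pm1\}$, the identity $\partial^2 S_{r_\alpha r_\gamma r_\beta}=0$ gives $4(\epsilon_1 a+\epsilon_2 b)S_{r_\beta}=0$, and since $\operatorname{char}R\neq 2$ this forces $\epsilon_2/\epsilon_1=-ab$. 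But $-ab$ is exactly the sign $\eta_{\alpha,\beta,\gamma}$ characterized by $\partial(S_{r_\alpha r_\beta}+\eta_{\alpha,\beta,\gamma}S_{r_\gamma r_\beta})=0$, so factoring out $2\epsilon_1=\pm2$ yields the asserted formula $\partial S_{r_\alpha r_\gamma r_\beta}=\pm 2\big(S_{r_\alpha r_\beta}+\eta_{\alpha,\beta,\gamma}S_{r_\gamma r_\beta}\big)$.

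The only genuinely delicate point is the bookkeeping of Cartan integers in the $C_3$ case: one must keep $\beta$ as the short root and $\gamma$ as the long one, so that $\langle\gamma^\vee,\beta\rangle=-1$ (rather than $-2$) and $r_\gamma\beta=\beta+\gamma$. Once this is fixed, the computation is identical to the $A_3$ case, which is what allows both configurations to be proved by the same argument.
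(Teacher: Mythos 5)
Your proposal is correct and follows essentially the same route as the paper: both compute the two exponents $\langle\gamma^{\vee},\beta\rangle=-1$ and $\langle\alpha^{\vee},\gamma\rangle+\langle\alpha^{\vee},r_{\gamma}\beta\rangle=\langle\alpha^{\vee},\beta\rangle=-1$ to conclude that each coefficient is $\pm 2$, and then invoke $\partial^{2}=0$ to identify the relative sign with $\eta_{\alpha,\beta,\gamma}$. Your explicit bookkeeping in the $C_{3}$ case (keeping $\beta$ short so that $\langle\gamma^{\vee},\beta\rangle=-1$) is exactly the point the paper relies on implicitly.
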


\begin{proof}By hypothesis, the roots verify $\langle \gamma ^{\vee },\beta \rangle =-1= \langle \alpha ^{\vee },\beta \rangle $ and $ \langle \alpha,\gamma \rangle =0$. These conditions give $c( r_{\alpha}r_{\gamma }r_{\beta },r_{\alpha }r_{\beta }) =\pm 2$. In addition, $\sigma (r_{\alpha }r_{\gamma }r_{\beta },r_{\gamma }r_{\beta })=\langle\alpha ^{\vee },\gamma +r_{\gamma }\beta \rangle =\langle \alpha^{\vee },\beta \rangle =-1$ so $c(r_{\alpha }r_{\gamma }r_{\beta},r_{\gamma }r_{\beta })=\pm 2$. This proves $\partial S_{r_{\alpha}r_{\gamma }r_{\beta }}=\pm 2(S_{r_{\alpha }r_{\beta }}+\nu S_{r_{\gamma}r_{\beta }})$, for some $\nu =\pm 1$. The fact that $\partial ^{2}=0$ implies $\partial ( S_{r_{\alpha }r_{\beta }}+\nu S_{r_{\gamma }r_{\beta }}) =0$ which forces $\nu =\eta_{\alpha,\beta,\gamma }$, and the choice of the sign is the right one.
\end{proof}

The results in this section yield the following expressions for the $2$-homology of the maximal flag manifolds.

\begin{Theorem}\label{thm.2homfull}Let $\mathbb{F}$ be the maximal flag manifold of a split real form~$\mathfrak{g}$. Then
\begin{alignat*}{3}
& H_{2} ( \mathbb{F},R ) = 0\qquad && \mbox{if }\mathfrak{g} \mbox{ is of type }G_{2}\mbox{ and} & \\
& H_{2} ( \mathbb{F},R ) = R/2R\oplus \cdots \oplus R/2R \qquad && \mbox{otherwise}.&
\end{alignat*}
The number of summands equals the number of generators of $\ker( \partial\colon \mathcal{C}_{2}\longrightarrow \mathcal{C}_{1})$ given in Propositions~{\rm \ref{pro.single}} and~{\rm \ref{pro.double}}.
\end{Theorem}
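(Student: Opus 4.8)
The plan is to read off $H_2(\mathbb{F},R)=\ker(\partial_2)/\operatorname{im}(\partial_3)$ directly from the two arrows of the cellular chain complex that have already been computed. Since $\mathfrak{g}$ is split, every restricted root has rank one, so $\Sigma_2=\varnothing$ and, by Proposition~\ref{pro.chains}, $\mathcal{C}_3$ is freely generated by the cells $S_{r_\alpha r_\beta r_\gamma}$ with $\alpha\neq\beta\neq\gamma$; their boundaries provide all of $\operatorname{im}(\partial_3)$. The case $\mathfrak{g}=G_2$ is disposed of at once: by Proposition~\ref{pro.kerg2} we have $\ker(\partial_2)=0$, whence $H_2(\mathbb{F},R)=0$.

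For the remaining diagrams I would first fix a basis $\{g_1,\dots,g_m\}$ of the free $R$-module $\ker(\partial_2)$, extracted from the spanning sets of Propositions~\ref{pro.single} and~\ref{pro.double}. Some care is needed here, since those spanning sets are redundant (a $D_4$ subdiagram contributes three $A_3$-type elements of which only two are independent), so $m$ should be understood as the rank of $\ker(\partial_2)$. The theorem then reduces to the single identity $\operatorname{im}(\partial_3)=2\ker(\partial_2)$: once this holds, $H_2(\mathbb{F},R)=\ker(\partial_2)/2\ker(\partial_2)\cong\bigoplus_{i=1}^m R/2R$, with one summand per basis element.

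The inclusion $\operatorname{im}(\partial_3)\subseteq 2\ker(\partial_2)$ is essentially already available: by $\partial^2=0$ every boundary $\partial S_{r_\alpha r_\beta r_\gamma}$ lies in $\ker(\partial_2)$, and Propositions~\ref{pro.bound3bab} and~\ref{pro.bound3A3} show that each such boundary is either $0$ or $\pm2$ times one of the listed kernel generators (an orthogonal cell $S_{r_\alpha r_\beta}$, a double-linked cell $S_{r_\alpha r_\beta}$, or an $A_3$/$C_3$ combination $S_{r_\alpha r_\beta}+\eta\,S_{r_\gamma r_\beta}$). The reverse inclusion is the heart of the matter: for each basis generator $g$ I must exhibit a $3$-cell whose boundary is $\pm2g$. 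The $A_3$ and $C_3$ generators are realized directly by Proposition~\ref{pro.bound3A3}, and a double-linked generator $S_{r_\alpha r_\beta}$ with $\alpha$ short by $\partial S_{r_\beta r_\alpha r_\beta}$ through Proposition~\ref{pro.bound3bab}(1).

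The delicate case is an orthogonal generator $S_{r_\alpha r_\beta}$ with $\langle\alpha,\beta\rangle=0$: to apply Proposition~\ref{pro.bound3bab}(2) or~(3) I need a third simple root $\gamma$ linked to $\alpha$ or to $\beta$ with coroot pairing $\langle\gamma^\vee,\cdot\rangle=-1$. This is exactly where connectedness of the Dynkin diagram enters—orthogonal simple roots can only occur in rank $\geq 3$, and since the diagram of a simple $\mathfrak{g}\neq G_2$ is connected, a node $\gamma$ adjacent to $\alpha$ or $\beta$ with the required pairing always exists, so $2\,S_{r_\alpha r_\beta}$ is indeed hit. I expect this root-combinatorial step, together with the bookkeeping that confirms the $3$-cells produced realize $2g$ for a full basis (and that the $D_4$ redundancy is correctly absorbed into the rank count), to be the main obstacle. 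Once it is settled, $\operatorname{im}(\partial_3)=2\ker(\partial_2)$ follows and the stated form of $H_2(\mathbb{F},R)$ is immediate.
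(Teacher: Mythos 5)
Your proposal is correct and follows essentially the same route as the paper: the $G_{2}$ case via Proposition~\ref{pro.kerg2}, and otherwise the identification $\operatorname{im}\partial_{3}=2\ker\partial_{2}$ obtained by realizing a $\pm 2$ multiple of each kernel generator as a boundary through Propositions~\ref{pro.bound3bab} and~\ref{pro.bound3A3}. The paper treats your ``delicate'' orthogonal case by the same adjacency argument, only making explicit the one exception ($\mathfrak{g}$ of type $C_{l}$ with $\beta=\alpha_{l}$, where the neighbour of $\beta$ pairs to $-2$ and one must instead take $\gamma$ adjacent to $\alpha$ and invoke item~(2) of Proposition~\ref{pro.bound3bab}).
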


\begin{proof}If $\mathfrak{g}$ is of type $G_{2}$ the kernel of $\partial $ is trivial as shown in Proposition~\ref{pro.kerg2}. If $\mathfrak{g}$ is not of type $G_{2} $, the generators of the kernel of $\partial\colon \mathcal{C}_{2}\longrightarrow \mathcal{C}_{1}$ were given in Propositions~\ref{pro.single} and~\ref{pro.double}. Each such generator has a $\pm 2$ multiple which is indeed a boundary. In fact, let $S_{r_{\alpha }r_{\beta }}$ be a $2$-cell such that $\partial S_{r_{\alpha }r_{\beta }}=0$. Then either $\langle \alpha,\beta \rangle =0$ or $\alpha $ and $\beta $ are double linked with $\alpha $ short.

In the double linked case with $\alpha $ the short root we have $\partial S_{r_{\beta }r_{\alpha }r_{\beta }}=\pm 2S_{r_{\alpha }r_{\beta }}$ because of~(1) in Proposition~\ref{pro.bound3bab}.

On the other hand for $ \langle \alpha,\beta \rangle =0$ suppose first that $\mathfrak{g}$ is of type $C_{l}$ and $\beta =\alpha _{l}$. Then there exists $\gamma \in \Sigma $ such that $ \langle \gamma ^{\vee},\alpha \rangle =-1$ so that (2) of Proposition~\ref{pro.bound3bab} gives $\partial S_{r_{\gamma }r_{\alpha }r_{\beta }}=\pm 2S_{r_{\alpha}r_{\beta }}$. If~$\mathfrak{g}$ is not of type $C_{l}$ or $\beta \neq \alpha _{l}$ in the $C_{l}$ case there always exists $\gamma \in \Sigma $ with $\langle \gamma ^{\vee },\beta \rangle =-1$. Item~(3) in the same proposition gives $\partial S_{r_{\alpha }r_{\gamma }r_{\beta }}=\pm 2S_{r_{\alpha }r_{\beta }}$.

Finally, each generator of the form $S_{r_{\alpha }r_{\gamma }r_{\beta }}$ with $\alpha,\beta,\gamma \in \Sigma $ in an $A_{3}$ or $C_{3}$ diagram has a $\pm 2$ multiple in the image of $\partial\colon \mathcal{C}_{3}\longrightarrow \mathcal{C}_{2}$ as shown in Proposition~\ref{pro.bound3A3}.
\end{proof}

\subsection{Partial flag manifolds}

At this point, all the relevant computations of kernel and images of the boundary map is done. To deal with the homology of the partial case, $\Theta \neq \varnothing$, we need to determine when does a cell~$S_{w}$ in the kernel corresponds to a minimal element and when does a boundary in~$R\cdot S_{w}$ is the image of a minimal cell.

\begin{Proposition}If a diagram has only single and double lines, the kernel of $\partial_\Theta^{\min}\colon \mathcal{C}_2^\Theta\longrightarrow \mathcal{C}_1^\Theta$ is spanned by the following elements:
\begin{itemize}\itemsep=0pt
\item $S_{r_\alpha r_\beta}$ with $\alpha,\beta\notin\Theta$ and $\langle\alpha, \beta \rangle=0$,
\item $S_{r_\alpha r_\beta}$ with $\beta\notin \Theta$, $\alpha$ and $\beta$ double linked and $\alpha$ short,
\item $S_{r_\alpha r_\beta}+ \eta^*_{\alpha,\beta,\gamma} S_{r_\gamma r_\beta}$ whenever $\alpha$, $\beta$, $\gamma$ form a $*_3$ diagram, in this order, with $\beta\notin \Theta$.
\end{itemize}
\end{Proposition}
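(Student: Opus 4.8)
The plan is to reduce everything to the maximal-flag computations of Propositions~\ref{pro.single} and~\ref{pro.double}, by showing that on minimal $2$-chains the operator $\partial_\Theta^{\min}$ is simply the restriction of the maximal-flag boundary $\partial$. First I would note that any minimal $2$-cell $S_{r_\alpha r_\beta}$ has $\beta\notin\Theta$ by Lemma~\ref{lm.minimalroots}. Since Proposition~\ref{pro.boundarymax} gives $\partial S_{r_\alpha r_\beta}=c(r_\alpha r_\beta,r_\beta)S_{r_\beta}$ and $S_{r_\beta}$ is minimal exactly when $\beta\notin\Theta$, the only $1$-cell appearing in $\partial S_{r_\alpha r_\beta}$ is already minimal, so the defining sum for $\partial_\Theta^{\min}S_{r_\alpha r_\beta}$ reproduces $\partial S_{r_\alpha r_\beta}$. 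Thus $\mathcal{C}^\Theta$ is a subcomplex of $\mathcal{C}$ and $\ker\partial_\Theta^{\min}=\ker\partial\cap\mathcal{C}_2^\Theta$; the task becomes identifying which maximal-flag kernel generators survive inside $\mathcal{C}_2^\Theta$.

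For the inclusion of the three listed families into $\ker\partial_\Theta^{\min}$, membership in $\ker\partial$ is immediate from Propositions~\ref{pro.single} and~\ref{pro.double}, so only minimality needs checking, via Lemma~\ref{lm.minimalroots}. I would compute $r_\beta\alpha$ case by case: when $\langle\alpha,\beta\rangle=0$ one has $r_\beta\alpha=\alpha$, so $S_{r_\alpha r_\beta}$ is minimal iff $\alpha,\beta\notin\Theta$, matching the first family; when $\alpha\not\perp\beta$ one has $r_\beta\alpha=\alpha+k\beta$ with $k\geq 1$, whose support contains $\beta$, so $r_\beta\alpha\notin\langle\Theta\rangle$ holds automatically and minimality reduces to $\beta\notin\Theta$. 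This covers the double-linked generator of the second family and, applied to both summands, shows that $S_{r_\alpha r_\beta}$ and $S_{r_\gamma r_\beta}$ are simultaneously minimal whenever $\beta\notin\Theta$, which is the content of the third family for an $A_3$ or $C_3$ configuration.

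For the converse I would rerun the arguments of Propositions~\ref{pro.single} and~\ref{pro.double} inside $\mathcal{C}_2^\Theta$. Writing an element of $\ker\partial_\Theta^{\min}$ as $\sum_\beta\xi_\beta$ grouped by the final reflection $r_\beta$, and using that each $\partial S_{r_\alpha r_\beta}$ is a multiple of $S_{r_\beta}$ with the $S_{r_\beta}$ linearly independent, I obtain $\partial\xi_\beta=0$ separately for each $\beta\notin\Theta$. Within a fixed group I would discard the orthogonal terms and the double-linked-with-$\alpha$-short terms, which lie in the kernel individually and give the first two families; the remaining cells are those with $\partial S_{r_\alpha r_\beta}=\pm 2S_{r_\beta}$, whose $\alpha$ are the simple neighbours of $\beta$ together with a possible long double-linked neighbour, so the relations among them are generated by $A_3$, $C_3$ and $D_4$ sub-configurations, with the $D_4$ relations decomposing into $A_3$ relations exactly as in Proposition~\ref{pro.single}. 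Because $\beta\notin\Theta$ throughout, every cell entering these relations is minimal, so all relations stay inside $\mathcal{C}_2^\Theta$ and the three families span the kernel.

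The step I expect to be most delicate is organizational rather than conceptual: verifying that passing to $\Theta\neq\varnothing$ produces no kernel element outside the span of the restricted maximal-flag generators, i.e.\ that the $D_4$-to-$A_3$ reduction and the sign choices $\eta^*_{\alpha,\beta,\gamma}$ go through unchanged. This is ensured by the single clean equivalence ``$S_{r_\alpha r_\beta}$ minimal $\iff\beta\notin\Theta$'' for $\alpha\not\perp\beta$, which makes the minimal and maximal kernels agree group-by-group in $\beta$; the only genuine departure from the maximal case is the extra condition $\alpha\notin\Theta$ attached to the orthogonal generators.
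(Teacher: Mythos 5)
Your proof is correct and follows essentially the same route as the paper: check minimality of the listed generators via Lemma~\ref{lm.minimalroots} and use that $\partial_\Theta^{\min}$ agrees with the maximal-flag boundary $\partial$ on minimal $2$-chains. You are in fact more complete than the paper, whose proof only verifies that the listed elements are minimal and killed by $\partial$, leaving the spanning direction implicit; your observation that for $\alpha\not\perp\beta$ minimality of $S_{r_\alpha r_\beta}$ depends only on $\beta\notin\Theta$ is precisely what lets the grouping-by-final-reflection argument of Propositions~\ref{pro.single} and~\ref{pro.double} descend to $\mathcal{C}_2^\Theta$.
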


\begin{proof}By Lemma \ref{lm.minimalroots}, any $w=r_{\alpha }r_{\beta }$ is a minimal element under the first two conditions, and also $r_{\alpha }r_{\beta }$ and $r_{\gamma }r_{\beta }$ are minimal when they fit into an $A_{3}$ or $C_{3}$ diagram with $\beta \notin \Theta $. Hence any element in the list above is indeed in $\mathcal{C}_{2}^{\Theta }$. Also, their image under $\partial $ in the maximal flag is zero, so $\partial _{\Theta }^{\min }$ is zero.
\end{proof}

In the case of a $G_2$ diagram, $\ker (\partial\colon \mathcal{C}_2\longrightarrow\mathcal{C}_1)=\{0\}$ and thus $\ker (\partial_\Theta^{\min}\colon \mathcal{C}_2\longrightarrow\mathcal{C}_1)$ is zero independently of $\Theta$.

By Lemma~\ref{lm.minimalroots}, $w= r_\alpha r_\beta r_\gamma$ is a minimal element in $\mathcal{W}$ if and only if $\gamma$, $r_\gamma\beta$, $r_\gamma r_\beta\alpha$ are all
outside $ \langle \Theta \rangle$.

\begin{Proposition}\label{pro.minimal3}For any $\beta \notin \Theta $, the following cells are in $\mathcal{C}_{3}^{\Theta }$:
\begin{itemize}\itemsep=0pt
\item $S_{r_\beta r_\alpha r_\beta}$ with $\alpha, \beta$ double linked and $\alpha$ short;

\item $S_{ r_{\alpha }r_{\gamma }r_{\beta }}$ with $\alpha$, $\beta$, $\gamma$ in an $A_{3}$ or $C_3$ diagram $($in this order$)$;

\item $S_{ r_{\alpha }r_{\gamma }r_{\beta }}$ with $\alpha\notin \Theta$, $\beta\neq \alpha$ and $ \langle \gamma, \beta \rangle\neq 0$;

\item $S_{r_{\gamma }r_{\alpha }r_{\beta }}$ with $\alpha\notin \Theta$ and $ \langle \alpha, \gamma \rangle\neq 0$.
\end{itemize}
\end{Proposition}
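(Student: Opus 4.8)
The plan is to show, for each of the four families, two things: that the indexing word is a reduced expression of length three, so that the cell is genuinely three-dimensional and lies in $\mathcal{C}_3$ by Proposition~\ref{pro.chains}, and that the word is minimal with respect to $\Theta$, so that the cell lies in the subcomplex $\mathcal{C}_3^\Theta$. Since $\mathfrak{g}$ is split, formula~(\ref{eq.dimSw}) gives $\dim S_w=\ell(w)$, so the first point is the assertion that $r_\beta r_\alpha r_\beta$, $r_\alpha r_\gamma r_\beta$ and $r_\gamma r_\alpha r_\beta$ are reduced of length three in their respective configurations. As I explain below, this reducedness will fall out of the very same coefficient computation that establishes minimality, so the substance of the proof is the minimality check.

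For minimality I would invoke Lemma~\ref{lm.minimalroots}: a word $w=r_a r_b r_c$ is minimal exactly when $c\notin\Theta$ and both $r_c b$ and $r_c r_b a$ lie outside $\langle\Theta\rangle$. In all four families the last reflection is $r_\beta$ with $\beta\notin\Theta$ by hypothesis, so the condition $c\notin\Theta$ is automatic. For the other two conditions I would use the elementary fact that a root lies in $\langle\Theta\rangle$ if and only if, expanded in the basis $\Sigma$ of simple roots, it is supported on $\Theta$; hence it suffices to exhibit one simple root outside $\Theta$ occurring with nonzero coefficient. The computations use $r_\delta\epsilon=\epsilon-\langle\epsilon,\delta^\vee\rangle\delta$, and the key bookkeeping remark is that $r_\beta$ modifies only the $\beta$-coefficient of a root.

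Running through the cases: in the double-link family $r_\beta r_\alpha r_\beta$ with $\alpha$ short one finds $r_\beta\alpha=\alpha+\beta$ and $r_\beta r_\alpha\beta=2\alpha+\beta$, both carrying $\beta$ with nonzero coefficient, so neither lies in $\langle\Theta\rangle$ since $\beta\notin\Theta$. In the $A_3$ and $C_3$ family one has $\langle\alpha,\gamma\rangle=0$, whence $r_\beta\gamma=\gamma+\beta$ or $\gamma+2\beta$ and $r_\beta r_\gamma\alpha=r_\beta\alpha=\alpha+\beta$, again with nonzero $\beta$-coefficient. For the third family, $r_\beta\gamma$ carries $\beta$ with coefficient $-\langle\gamma,\beta^\vee\rangle\neq0$ (using $\langle\gamma,\beta\rangle\neq0$), while $r_\beta r_\gamma\alpha$ retains $\alpha$-coefficient $1$ (since $\alpha\neq\gamma$ and $r_\beta$ does not touch the $\alpha$-coefficient), and here one invokes $\alpha\notin\Theta$. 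The fourth family is symmetric: $r_\beta\alpha$ has $\alpha$-coefficient $1$, and $r_\beta r_\alpha\gamma$ has $\alpha$-coefficient $-\langle\gamma,\alpha^\vee\rangle\neq0$ by $\langle\alpha,\gamma\rangle\neq0$, both nonzero, so $\alpha\notin\Theta$ closes the argument. Note that in every case the tracked root $r_c r_b a$ has a positive simple coefficient, hence is a positive root; by the length criterion $\ell(r_a\,r_b r_c)=\ell(r_b r_c)+1=3$, which supplies the reducedness needed in the first paragraph.

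The step I expect to be the main obstacle is the second condition $r_c r_b a\notin\langle\Theta\rangle$, which involves composing two reflections and so risks an accidental cancellation of the coefficient being followed; this is most delicate in the double-line and $C_3$ configurations, where some Cartan integers equal $\pm2$ and a reflection can shift a coefficient by twice a root. The remedy is exactly the observation above: I would always track the coefficient of a simple root that the outer reflection $r_\beta$ leaves untouched, namely the $\alpha$-coefficient when $r_\beta$ is applied last, or, failing that, follow the $\beta$-coefficient and read off directly from the relevant Cartan integers that it is nonzero. Since the chosen simple root lies outside $\Theta$ by hypothesis, its nonvanishing coefficient immediately precludes membership in $\langle\Theta\rangle$.
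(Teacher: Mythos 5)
Your proof is correct and follows essentially the same route as the paper's: apply Lemma~\ref{lm.minimalroots} with last reflection $r_\beta$, compute $r_\beta$ of the middle root and $r_\beta r_{(\cdot)}$ of the first root in each configuration, and observe that each carries a nonzero coefficient on a simple root ($\beta$ or $\alpha$) that lies outside $\Theta$. The only difference is that you also verify explicitly that the words are reduced of length three (which the paper leaves implicit), a harmless and slightly more complete addition.
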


\begin{proof}Let $w= r_\beta r_\alpha r_\beta$ with $\alpha$ and $\beta$ double linked, $\alpha$ short and $\beta\notin\Theta$. Then $\beta$, $r_\beta\alpha=\alpha+\beta $ and $r_\beta r_\alpha\beta=\beta+2\alpha$ are all outside $\langle \Theta\rangle$. Hence $S_{r_\beta r_\alpha r_\beta}\in \mathcal{C}_3^\Theta$.

Let $\alpha$, $\beta$, $\gamma$ be in $*_3$ configuration (in this order) and let $w= r_{\alpha }r_{\gamma }r_{\beta }$. Then $r_\beta\gamma=\gamma+\langle \beta^\vee,\gamma\rangle\beta$ and $r_\beta r_\gamma\alpha=r_\beta\alpha=\beta+\alpha$ have non-zero component in~$\beta$, so~$w$ is minimal.

If $\beta\neq \alpha$, $\alpha,\beta\notin\Theta$ and $\langle\gamma,\beta\rangle\neq 0$ then $r_\beta\gamma$ has non-zero component in $\beta$ and $r_\beta r_\gamma\alpha=\alpha-\langle\gamma^\vee,\alpha\rangle-\langle
\beta^\vee,r_\gamma\alpha\rangle\beta$ which has non-zero component in~$\alpha$. Therefore $w=r_\alpha r_\gamma r_\beta $ is minimal. The proof of the last item follows in a similar way.
\end{proof}

The propositions above together with the proof of Theorem~\ref{thm.2homfull} show that if $S_w$ is a $2$-di\-men\-sional cell corresponding $w\in \mathcal W_\Theta^{\rm min}$ then $\pm 2 S_w$ is a border. This leads to the following conclusion.

\begin{Theorem}\label{thm.2homint}Let $\mathbb{F}_{\Theta }$ be a partial flag manifold of a split real form~$\mathfrak{g}$. Then for any $\Theta $
\begin{alignat*}{3}
& H_{2} ( \mathbb{F}_{\Theta },R ) = 0 \qquad && \mbox{if }\mathfrak{g} \mbox{ is of type }G_{2}\mbox{ and}& \\
& H_{2} ( \mathbb{F}_{\Theta },R ) = R/2R\oplus \cdots \oplus R/2R \qquad && \mbox{otherwise}. &
\end{alignat*}
The number of summands equals $\dim \ker \big(\partial _{\Theta }^{\min }\colon \mathcal{C}_{2}^{\Theta }\longrightarrow \mathcal{C}_{1}^{\Theta }\big)$.
\end{Theorem}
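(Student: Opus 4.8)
The final statement is Theorem \ref{thm.2homint}, which extends the maximal-flag computation (Theorem \ref{thm.2homfull}) to partial flag manifolds $\mathbb{F}_\Theta$ of a split real form. The homology is computed from the minimal subcomplex $(\mathcal{C}^\Theta, \partial_\Theta^{\min})$, where $\partial_\Theta^{\min}$ is the restriction of the maximal-flag boundary $\partial$ to minimal cells. So $H_2(\mathbb{F}_\Theta, R) = \ker\partial_\Theta^{\min} / \im\partial_\Theta^{\min}$ where the maps run $\mathcal{C}_2^\Theta \to \mathcal{C}_1^\Theta$ and $\mathcal{C}_3^\Theta \to \mathcal{C}_2^\Theta$. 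The $G_2$ case should be disposed of first: since already $\ker(\partial\colon\mathcal{C}_2\to\mathcal{C}_1)=\{0\}$ for $G_2$ (Proposition \ref{pro.kerg2}), the minimal kernel is a fortiori zero, giving $H_2=0$ for every $\Theta$.

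Let me think about how I would structure the proof.

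**The approach.** The computation naturally splits into verifying a kernel and an image, exactly as in the maximal case. The preceding Proposition on $\ker\partial_\Theta^{\min}$ (the unnumbered one before Proposition \ref{pro.minimal3}) already identifies generators of $\ker(\partial_\Theta^{\min}\colon\mathcal{C}_2^\Theta\to\mathcal{C}_1^\Theta)$: these are precisely the maximal-flag kernel generators from Propositions \ref{pro.single} and \ref{pro.double} that happen to be built from minimal cells, i.e., those respecting the conditions $\alpha,\beta\notin\Theta$, $\beta\notin\Theta$, etc. dictated by Lemma \ref{lm.minimalroots}. So the dimension of this kernel over $R$ is well-defined and finite.

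Here's the key claim I need to establish.

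\begin{proof}
If $\mathfrak{g}$ is of type $G_2$, then $\ker(\partial\colon\mathcal{C}_2\to\mathcal{C}_1)=\{0\}$ by Proposition~\ref{pro.kerg2}, and since $\partial_\Theta^{\min}$ is the restriction of $\partial$ to the subcomplex $\mathcal{C}^\Theta\subset\mathcal{C}$, we get $\ker(\partial_\Theta^{\min}\colon\mathcal{C}_2^\Theta\to\mathcal{C}_1^\Theta)=\{0\}$ for every $\Theta$, whence $H_2(\mathbb{F}_\Theta,R)=0$.

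Assume now $\mathfrak{g}$ is not of type $G_2$. The generators of $\ker(\partial_\Theta^{\min}\colon\mathcal{C}_2^\Theta\to\mathcal{C}_1^\Theta)$ are the elements listed in the Proposition preceding Proposition~\ref{pro.minimal3}. We claim that each such generator $S_w$ (or $S_{r_\alpha r_\beta}+\eta^*_{\alpha,\beta,\gamma}S_{r_\gamma r_\beta}$) admits a $3$-dimensional \emph{minimal} cell whose boundary is $\pm2$ times it; this yields $H_2(\mathbb{F}_\Theta,R)=R/2R\oplus\cdots\oplus R/2R$, one summand per generator.

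For a generator $S_{r_\alpha r_\beta}$ with $\alpha,\beta$ double linked and $\alpha$ short (so $\beta\notin\Theta$), Proposition~\ref{pro.bound3bab}(1) gives $\partial S_{r_\beta r_\alpha r_\beta}=\pm2S_{r_\alpha r_\beta}$, and $S_{r_\beta r_\alpha r_\beta}\in\mathcal{C}_3^\Theta$ by the first item of Proposition~\ref{pro.minimal3}. For a generator $S_{r_\alpha r_\beta}+\eta^*_{\alpha,\beta,\gamma}S_{r_\gamma r_\beta}$ arising from an $A_3$ or $C_3$ configuration (so $\beta\notin\Theta$), Proposition~\ref{pro.bound3A3} gives $\partial S_{r_\alpha r_\gamma r_\beta}=\pm2(S_{r_\alpha r_\beta}+\eta_{\alpha,\beta,\gamma}S_{r_\gamma r_\beta})$, with $S_{r_\alpha r_\gamma r_\beta}\in\mathcal{C}_3^\Theta$ by the second item of Proposition~\ref{pro.minimal3}.

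It remains to treat a generator $S_{r_\alpha r_\beta}$ with $\alpha,\beta\notin\Theta$ and $\langle\alpha,\beta\rangle=0$. As in the proof of Theorem~\ref{thm.2homfull}, choose $\gamma\in\Sigma$ with $\langle\gamma^\vee,\beta\rangle=-1$ whenever such a $\gamma$ exists (the generic situation), so that Proposition~\ref{pro.bound3bab}(3) yields $\partial S_{r_\alpha r_\gamma r_\beta}=\pm2S_{r_\alpha r_\beta}$; minimality of $S_{r_\alpha r_\gamma r_\beta}$ follows from the third item of Proposition~\ref{pro.minimal3}, since $\langle\gamma,\beta\rangle\neq0$ and $\alpha\notin\Theta$, $\beta\neq\alpha$. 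In the exceptional case where $\beta$ admits no such neighbour (type $C_l$ with $\beta=\alpha_l$), pick instead $\gamma$ with $\langle\gamma^\vee,\alpha\rangle=-1$ and invoke Proposition~\ref{pro.bound3bab}(2) to get $\partial S_{r_\gamma r_\alpha r_\beta}=\pm2S_{r_\alpha r_\beta}$, with minimality now guaranteed by the fourth item of Proposition~\ref{pro.minimal3}, since $\langle\alpha,\gamma\rangle\neq0$ and $\alpha\notin\Theta$. Thus in every case a $\pm2$ multiple of each kernel generator lies in $\im(\partial_\Theta^{\min}\colon\mathcal{C}_3^\Theta\to\mathcal{C}_2^\Theta)$, and no other boundaries arise. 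Therefore $H_2(\mathbb{F}_\Theta,R)$ is a direct sum of copies of $R/2R$, one for each generator of $\ker(\partial_\Theta^{\min})$, which proves the theorem.
\end{proof}

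**Where the difficulty lies.** The genuinely delicate step is the verification that the $3$-cells realizing the $\pm2$ boundaries are \emph{themselves minimal}, i.e., lie in $\mathcal{C}_3^\Theta$ and not merely in $\mathcal{C}_3$. In the maximal case any choice of auxiliary $\gamma$ works, but here $\gamma$ must be chosen so that the resulting length-$3$ word satisfies the minimality criterion of Lemma~\ref{lm.minimalroots}. This is exactly the content that Proposition~\ref{pro.minimal3} was engineered to supply, so the real obstacle is confirming that the four cases of that proposition cover every kernel generator produced by the Proposition on $\ker\partial_\Theta^{\min}$, and in particular that the type-$C_l$ exceptional case $\beta=\alpha_l$ is correctly caught by item (2) of Proposition~\ref{pro.bound3bab} together with the fourth item of Proposition~\ref{pro.minimal3}. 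Once the bookkeeping matching kernel generators to minimal $3$-cells is complete, the quotient structure $R/2R$ follows formally from $\operatorname{char}R\neq2$ exactly as in Theorem~\ref{thm.2homfull}.
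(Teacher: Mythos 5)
Your proof is correct and follows the paper's own route: the paper disposes of $G_2$ via Proposition~\ref{pro.kerg2}, reads the kernel generators of $\partial_\Theta^{\min}$ off the unnumbered proposition preceding Proposition~\ref{pro.minimal3}, and then observes that the minimal $3$-cells supplied by Proposition~\ref{pro.minimal3} combined with the boundary computations from the proof of Theorem~\ref{thm.2homfull} (Propositions~\ref{pro.bound3bab} and~\ref{pro.bound3A3}) give a $\pm 2$ multiple of each generator as a boundary. Your write-up simply makes explicit the case-matching (including the type-$C_l$ exception) that the paper compresses into one sentence.
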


\section{Classifications}

Let $\mathbb{F}_{\Theta }$ be a flag manifold associated to a non-compact simple real Lie algebra $\mathfrak{g}$. Theorems~\ref{thm.nosplit},~\ref{thm.2homfull} and \ref{thm.2homint} imply that the second homology group of $\mathbb{F}_{\Theta }$ is determined by the simple roots in~$\Sigma _{\mathrm{split}}$ and $\Sigma _{2}$.

The multiplicities (and hence the ranks) of the restricted roots of simple real Lie algebras can be read from the classification table of the real forms (see, e.g., \cite[pp.~30--32]{WarG}). If for a~Lie algebra $\mathfrak{g}$ the sets $\Sigma _{\mathrm{split}}$ and $\Sigma _{2}$ are empty (that is, if $\operatorname{rank}\alpha \geq 3$ for every $\alpha \in \Sigma $) then the second homology of any flag manifold associated to $\mathfrak{g}$ is zero. The Lie algebras having roots with $\operatorname{rank} \alpha \leq 2$ are listed in Table \ref{table} below. The homology of the flag manifolds associated to the Lie algebras in the list is given in the following theorem that summarizes Theorems~\ref{thm.nosplit},~\ref{thm.2homfull} and~\ref{thm.2homint}.

\begin{Theorem}\label{teoResumo}Let $\mathbb{F}_{\Theta }$ be the flag manifold associated to a non-compact simple real Lie algebra~$\mathfrak{g}$ and to the subset $\Theta \subset \Sigma $ of restricted simple roots. If $\operatorname{char}R\neq 2$ then
\begin{itemize}\itemsep=0pt
\item $H_{2}(\mathbb{F}_{\Theta },R)=0$ if and only if $\mathfrak{g}$ is of type $G_{2}$ for any $\Theta $ or if $\mathfrak{g}$ is not of type $G_{2}$ and both $\Sigma _{\mathrm{split}}$ and $\Sigma _{2}$ are contained in $\Theta $. This is the case if $\mathfrak{g}$ does not appear in Table~{\rm \ref{table}}.

\item $H_{2}(\mathbb{F}_{\Theta },R)$ is non-zero and has only torsion components $R/2R$ if and only if $\mathfrak{g}$ is not of type $G_{2}$, $\Sigma _{2}\subset \Theta $ and $\Sigma _{\mathrm{split}}$ is not contained
in $\Theta $. In particular, if $\mathfrak{g}$ is a split real form, not of type~$G_{2}$.

\item $H_{2}(\mathbb{F}_{\Theta },R)$ contains a free $R$ module if and only if $\Sigma _{2}\cap (\Sigma \backslash \Theta )\neq \varnothing $. The rank of the module equals the cardinality of this intersection.
\end{itemize}
\end{Theorem}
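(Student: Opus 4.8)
The plan is to assemble the statement directly from the three structural results already proved: Theorem~\ref{thm.nosplit}, which isolates a free part and a split part, and Theorems~\ref{thm.2homfull} and~\ref{thm.2homint}, which describe the homology of flag manifolds of split real forms. First I would apply Theorem~\ref{thm.nosplit} to write
\[
H_{2}(\mathbb{F}_\Theta,R)=H_{2}(\mathbb{F}_{\Theta_{\mathrm{split}}},R)\oplus\bigoplus_{\alpha\in\Sigma_2^\Theta}R\cdot S_{r_\alpha},
\]
whose second summand is free of rank $|\Sigma_2^\Theta|=\bigl|\Sigma_2\cap(\Sigma\setminus\Theta)\bigr|$. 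Since $\mathfrak{g}_{\mathrm{split}}$ is a split real form with Dynkin diagram $\Sigma_{\mathrm{split}}$, I would apply Theorems~\ref{thm.2homfull} and~\ref{thm.2homint} to the first summand---treating each connected component of $\Sigma_{\mathrm{split}}$ separately when it is disconnected, which is legitimate because the kernel generators of Propositions~\ref{pro.single} and~\ref{pro.double} involve only roots of a single component---to conclude that $H_{2}(\mathbb{F}_{\Theta_{\mathrm{split}}},R)$ is either $0$ or a direct sum of copies of $R/2R$ (a factor $R/2R$ being nonzero precisely when $2$ is not invertible in $R$). In particular this summand is annihilated by $2$ and carries no free part, so the free rank of $H_{2}(\mathbb{F}_\Theta,R)$ equals $\bigl|\Sigma_2\cap(\Sigma\setminus\Theta)\bigr|$, which is the content of the third bullet.

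To obtain the first two bullets I would analyse the vanishing of the two summands separately. The free summand vanishes if and only if $\Sigma_2\subset\Theta$. The torsion summand $H_{2}(\mathbb{F}_{\Theta_{\mathrm{split}}},R)$ vanishes if and only if either $\mathfrak{g}$ is of type $G_2$ or the number of its $R/2R$ factors---equal, by Theorem~\ref{thm.2homint}, to $\dim\ker\bigl(\partial^{\min}_{\Theta_{\mathrm{split}}}\colon\mathcal{C}_2^{\Theta_{\mathrm{split}}}\to\mathcal{C}_1^{\Theta_{\mathrm{split}}}\bigr)$---is zero. Since a triple bond occurs in a restricted root system only for $G_2$, the algebra $\mathfrak{g}$ is of type $G_2$ exactly when $\Sigma_{\mathrm{split}}$ is the $G_2$ diagram; then $\Sigma_2=\varnothing$ and $\mathfrak{g}_{\mathrm{split}}=\mathfrak{g}$, so $H_{2}(\mathbb{F}_\Theta,R)=0$ for every $\Theta$, which is the $G_2$ clause of the first bullet. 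For $\mathfrak{g}$ not of type $G_2$, combining the two criteria yields $H_2(\mathbb{F}_\Theta,R)=0$ if and only if $\Sigma_2\subset\Theta$ and the split-part kernel is trivial, and $H_2(\mathbb{F}_\Theta,R)$ is nonzero and purely $2$-torsion if and only if $\Sigma_2\subset\Theta$ while that kernel is nontrivial.

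The step I expect to be the main obstacle is matching triviality of $\ker\partial^{\min}_{\Theta_{\mathrm{split}}}$ with the combinatorial condition $\Sigma_{\mathrm{split}}\subset\Theta$ appearing in the statement. One direction is immediate: if $\Sigma_{\mathrm{split}}\subset\Theta$ then $\Theta_{\mathrm{split}}=\Sigma_{\mathrm{split}}$, the space $\mathbb{F}_{\Theta_{\mathrm{split}}}$ is a point, and the kernel vanishes. For the converse I would take a simple root $\beta\in\Sigma_{\mathrm{split}}\setminus\Theta$ and try to complete it, inside $\Sigma_{\mathrm{split}}$, to one of the cycle configurations of Propositions~\ref{pro.single} and~\ref{pro.double}: an orthogonal partner, a double bond with $\beta$ at the long end, or an $A_3$ or $C_3$ configuration with $\beta$ in the middle; that the resulting minimal $2$-chains are cycles follows from Lemma~\ref{lm.boundary2}, their minimality from Lemma~\ref{lm.minimalroots}, and twice each of them bounding (so that it contributes a factor $R/2R$) from Propositions~\ref{pro.bound3bab} and~\ref{pro.bound3A3}. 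The delicate point is the low-rank diagrams such as $A_1$ and $A_2$, where a node outside $\Theta$ need not extend to any such configuration and the kernel can be trivial even though $\Sigma_{\mathrm{split}}\not\subset\Theta$; these degenerate sub-diagrams must be inspected individually, while all larger diagrams are covered uniformly by the configuration analysis above.
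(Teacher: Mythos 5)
Your assembly is precisely the route the paper itself takes: Theorem~\ref{teoResumo} is offered there as a summary of Theorems~\ref{thm.nosplit}, \ref{thm.2homfull} and~\ref{thm.2homint} with no independent argument, and your derivation of the third bullet (free rank equal to $\vert \Sigma_2\cap(\Sigma\setminus\Theta)\vert$, the split summand being pure $2$-torsion) is complete. One small inaccuracy: the kernel generators of Propositions~\ref{pro.single} and~\ref{pro.double} do \emph{not} all live in a single connected component of $\Sigma_{\mathrm{split}}$ --- the generators $S_{r_\alpha r_\beta}$ with $\langle\alpha,\beta\rangle=0$ may straddle two components --- so the component-by-component reduction is not justified as stated; it is moot only because every $\Sigma_{\mathrm{split}}$ occurring in Table~\ref{table} happens to be connected.

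The ``delicate point'' you isolate at the end is a genuine gap, and it cannot be closed: the implication $\Sigma_{\mathrm{split}}\not\subset\Theta\Rightarrow\ker\partial^{\min}_{\Theta_{\mathrm{split}}}\neq 0$ is simply false, so the first two bullets fail as literally stated and no inspection of the degenerate diagrams will rescue them. Concretely, for $\mathfrak{g}=\mathfrak{sl}(2,\mathbb{R})$ (type $\mathrm{AI}$, $l=1$) with $\Theta=\varnothing$ one has $\mathbb{F}=\mathbb{RP}^1$ and $H_2=0$; for $\mathfrak{sl}(n+1,\mathbb{R})$ with $\Theta=\Sigma\setminus\{\alpha_1\}$ one gets $\mathbb{RP}^n$ with $H_2(\mathbb{RP}^n,\mathbb{Z})=0$; and inside Table~\ref{table} itself, type $\mathrm{AIII}_2$ with $\Theta=\Sigma_2=\{\alpha_1,\dots,\alpha_{l-1}\}$ has $\Sigma_{\mathrm{split}}=\{\alpha_l\}\not\subset\Theta$ while $\mathcal{C}_2^{\Theta_{\mathrm{split}}}=0$. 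In each case the hypotheses of the second bullet hold yet $H_2(\mathbb{F}_\Theta,R)=0$, contradicting both the ``if'' of the second bullet and the ``only if'' of the first. The only criterion valid in general is the one carried by Theorems~\ref{thm.2homfull} and~\ref{thm.2homint}: the number of $R/2R$ summands is $\dim\ker\partial^{\min}_{\Theta_{\mathrm{split}}}$, which vanishes exactly when no root of $\Sigma_{\mathrm{split}}\setminus\Theta$ can be completed to one of the minimal configurations of Propositions~\ref{pro.single} and~\ref{pro.double} (an orthogonal partner outside $\Theta$, a double link with the short root, or the middle of an $A_3$ or $C_3$). There is also the secondary caveat you yourself note in passing: $R/2R=0$ whenever $2$ is invertible in $R$, so ``non-zero'' in the second bullet additionally presupposes that $2$ is not a unit. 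Your argument therefore proves the third bullet and the $G_2$ clause, but the first two bullets require either these extra hypotheses or a restatement in terms of the kernel dimension; none of this affects Theorem~\ref{tedeRham}, which uses only the free part.
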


\noindent
\textbf{Notation.} In Table \ref{table}, the simple roots of $B_{l}$, $C_{l}$ and $F_{4}$ are labelled according to the following diagrams

\centerline{\includegraphics{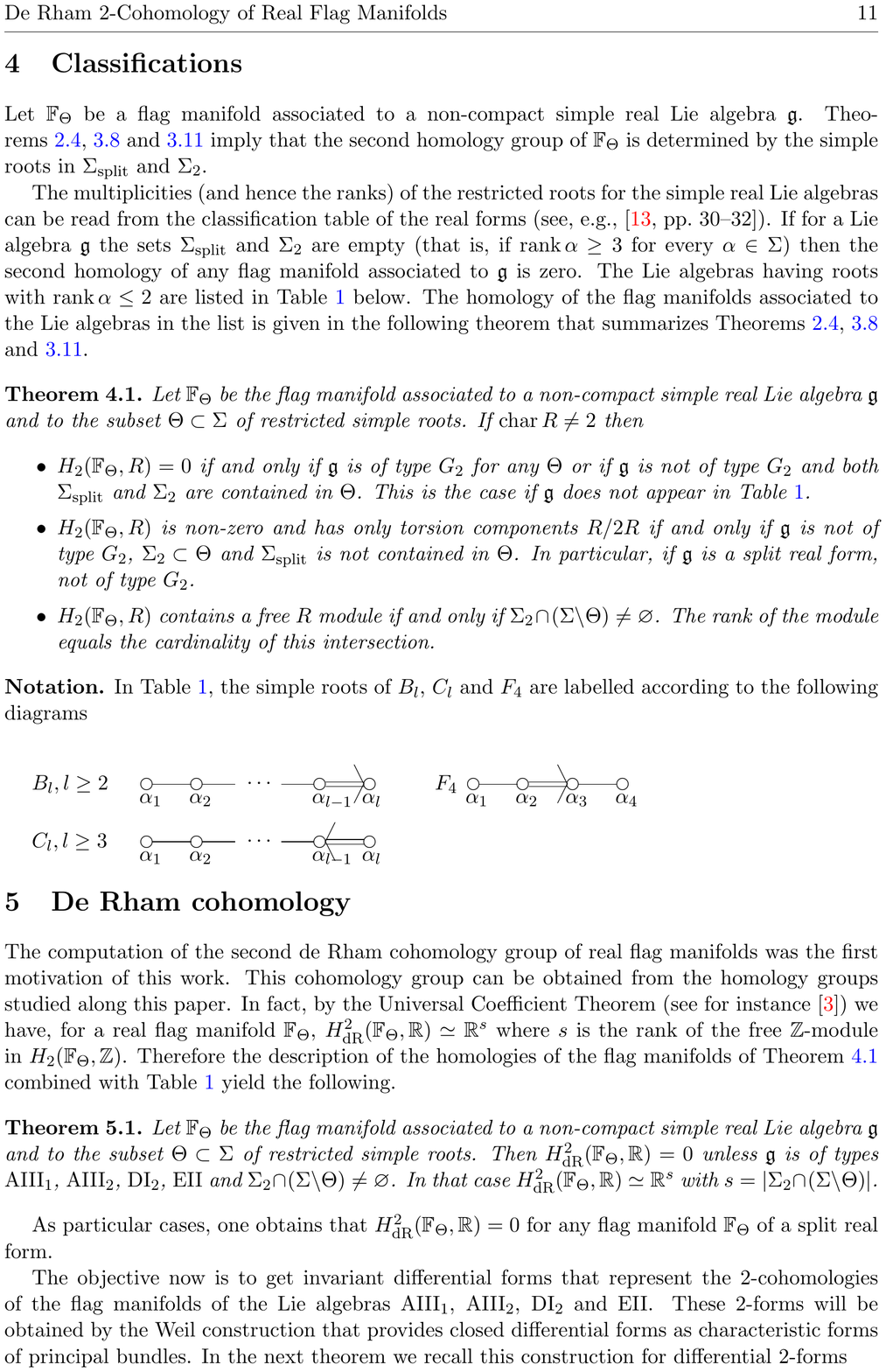}}

\begin{table}[h]\centering
\begin{tabular}{|l|c|c|c|}
\hline
Lie algebra & Dynkin diagram & $\Sigma_{\mathrm{split}}$ & $\Sigma_2 $ \\
\hline
$\mathrm{AI}$ & $A_l $ & $\Sigma $ & --- \\ \hline
$\mathrm{AIII_1}$ & $B_l $ & --- & $\{\alpha_{1}, \ldots, \alpha_{l-1} \} $
\\ \hline
$\mathrm{AIII_2}$ & $C_l $ & $\{\alpha_{l} \} $ & $\{\alpha_1, \ldots,
\alpha_{l-1}\}$ \\ \hline
$\mathrm{BI_1} $ & $B_l $ & $\Sigma $ & --- \\ \hline
$\mathrm{BI_2}$ & $B_l $ & $\{\alpha_1,\ldots,\alpha_{l-1}\}$ & --- \\ \hline
$\mathrm{CI}$ & $C_l $ & $\Sigma$ & --- \\ \hline
$\mathrm{DI_1}$ & $B_l $ & $\{\alpha_1, \dots, \alpha_{l-1}\}$ & --- \\ \hline
$\mathrm{DI_2}$ & $B_l $ & $\{\alpha_1, \ldots, \alpha_{l-1}\}$ & $%
\{\alpha_l\}$ \\ \hline
$\mathrm{DI_3}$ & $D_l $ & $\Sigma$ & --- \\ \hline
$\mathrm{DIII_1}$ & $C_l $ & $\{\alpha_l\}$ & --- \\ \hline
$\mathrm{EI}$ & $E_6 $ & $\Sigma$ & --- \\ \hline
$\mathrm{EII}$ & $F_4 $ & $\{\alpha_1,\alpha_2\}$ & $\{\alpha_3,\alpha_4\}$
\\ \hline
$\mathrm{EV}$ & $E_7 $ & $\Sigma$ & --- \\ \hline
$\mathrm{EVI}$ & $F_4 $ & $\{\alpha_1,\alpha_2\}$ & --- \\ \hline
$\mathrm{EVII}$ & $C_3 $ & $\{\alpha_1\}$ & \\ \hline
$\mathrm{EVIII}$ & $E_8 $ & $\Sigma$ & --- \\ \hline
$\mathrm{EIX}$ & $F_4 $ & $\{\alpha_1,\alpha_2\}$ & --- \\ \hline
$\mathrm{FI}$ & $F_4 $ & $\Sigma$ & --- \\ \hline
$\mathrm{G}$ & $G_2 $ & $\Sigma$ & --- \\ \hline
\end{tabular}
\caption{Lie algebras having roots with rank $\leq 2 $.}\label{table}
\end{table}

\section{De Rham cohomology}

The computation of the second de Rham cohomology group of real flag manifolds was the first motivation of this work. This cohomology group can be obtained from the homology groups studied along this paper. In fact, by the universal coefficient theorem (see for instance \cite{Ha}) we have, for a real flag manifold $\mathbb{F}_{\Theta }$, $H_{\mathrm{dR}}^{2}(\mathbb{F}_{\Theta },\mathbb{R})\simeq \mathbb{R}^{s}$ where $s$ is the rank of the free $\mathbb{Z}$-module in $H_{2}(\mathbb{F}_{\Theta },\mathbb{Z})$. Therefore the description of the homologies of the flag manifolds of Theorem~\ref{teoResumo} combined with Table~\ref{table} yield the following.

\begin{Theorem}\label{tedeRham}Let $\mathbb{F}_{\Theta }$ be the flag manifold associated to a non-compact simple real Lie algebra~$\mathfrak{g}$ and to the subset $\Theta \subset \Sigma $ of restricted simple roots. Then $H_{\mathrm{dR}}^{2}(\mathbb{F}_{\Theta },\mathbb{R})=0$ unless $\mathfrak{g}$ is of types $\mathrm{AIII_{1}}$, $\mathrm{AIII_{2}}$, $\mathrm{DI_{2}}$, $\mathrm{EII}$ and $\Sigma _{2}\cap (\Sigma \backslash \Theta )\neq \varnothing $. In that case $H_{\mathrm{dR}}^{2}(\mathbb{F}_{\Theta },\mathbb{R})\simeq \mathbb{R}^{s}$ with $s=\vert \Sigma _{2}\cap (\Sigma \backslash \Theta)\vert $.
\end{Theorem}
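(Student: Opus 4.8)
The plan is to combine the universal coefficient reduction recorded in the paragraph preceding the statement with the homology classification of Theorem~\ref{teoResumo} and the tabulated values of $\Sigma_2$ in Table~\ref{table}. No genuinely new computation is needed: the theorem emerges as a synthesis of material already established, so the proof is essentially a matter of assembling the pieces and then inspecting a table.

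First I would invoke the reduction stated just above the theorem: for any real flag manifold one has $H_{\mathrm{dR}}^2(\mathbb{F}_{\Theta},\mathbb{R})\simeq\mathbb{R}^s$, with $s$ the rank of the free $\mathbb{Z}$-module inside $H_2(\mathbb{F}_{\Theta},\mathbb{Z})$. The mechanism is that de Rham's theorem identifies $H_{\mathrm{dR}}^2$ with the singular cohomology $H^2(\mathbb{F}_{\Theta},\mathbb{R})$, and the universal coefficient theorem gives $H^2(\mathbb{F}_{\Theta},\mathbb{R})\cong\operatorname{Hom}(H_2(\mathbb{F}_{\Theta},\mathbb{Z}),\mathbb{R})$ because $\operatorname{Ext}(-,\mathbb{R})$ vanishes over the field $\mathbb{R}$; the torsion subgroup of $H_2$ is annihilated by $\operatorname{Hom}(-,\mathbb{R})$, so only the free part of rank $s$ contributes. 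The task is therefore reduced to computing $s$.

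Next I would read off $s$ from Theorem~\ref{teoResumo} applied with $R=\mathbb{Z}$. Since $\operatorname{char}\mathbb{Z}=0\neq 2$, that theorem applies, and its third item states that the free part of $H_2(\mathbb{F}_{\Theta},\mathbb{Z})$ is nonzero exactly when $\Sigma_2\cap(\Sigma\backslash\Theta)\neq\varnothing$, with rank equal to $\vert\Sigma_2\cap(\Sigma\backslash\Theta)\vert$. Hence $s=\vert\Sigma_2\cap(\Sigma\backslash\Theta)\vert$, and $H_{\mathrm{dR}}^2(\mathbb{F}_{\Theta},\mathbb{R})=0$ precisely when $\Sigma_2\cap(\Sigma\backslash\Theta)=\varnothing$.

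Finally I would isolate the algebras for which this intersection can be nonempty for some $\Theta$. A nonempty intersection forces $\Sigma_2\neq\varnothing$, so the last column of Table~\ref{table} decides the matter: the only non-compact simple real Lie algebras with $\Sigma_2\neq\varnothing$ are of types $\mathrm{AIII_1}$, $\mathrm{AIII_2}$, $\mathrm{DI_2}$, and $\mathrm{EII}$. Every other algebra, whether it appears in the table with an empty $\Sigma_2$ column or is absent from the table (in which case all simple roots have rank at least $3$), satisfies $\Sigma_2=\varnothing$ and hence $H_{\mathrm{dR}}^2(\mathbb{F}_{\Theta},\mathbb{R})=0$ for every $\Theta$. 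Combining this with the previous paragraph yields exactly the stated dichotomy. The only real work lies in this last step, namely confirming directly from Table~\ref{table} that the four named types are precisely those with nonempty $\Sigma_2$; this is immediate by inspection, so there is no serious obstacle to anticipate.
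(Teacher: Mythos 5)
Your proposal is correct and follows exactly the paper's route: the universal coefficient reduction $H_{\mathrm{dR}}^{2}(\mathbb{F}_{\Theta },\mathbb{R})\simeq \mathbb{R}^{s}$ stated in the paragraph preceding the theorem, the rank count from the third item of Theorem~\ref{teoResumo} with $R=\mathbb{Z}$, and the inspection of the $\Sigma_2$ column of Table~\ref{table} to isolate the types $\mathrm{AIII_1}$, $\mathrm{AIII_2}$, $\mathrm{DI_2}$, $\mathrm{EII}$. Nothing is missing.
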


As particular cases, one obtains that $H_{\mathrm{dR}}^{2}(\mathbb{F}_{\Theta },\mathbb{R})=0$ for any flag manifold $\mathbb{F}_{\Theta }$ of a~split real form.

The objective now is to get invariant differential forms that represent the $2$-cohomologies of the flag manifolds of the Lie algebras $\mathrm{AIII_{1}}$, $\mathrm{AIII_{2}}$, $\mathrm{DI_{2}}$ and $\mathrm{EII}$. These $2$-forms will be obtained by the Weil construction that provides closed differential forms as characteristic forms of principal bundles. In the next theorem we recall this construction for differential $2$-forms

\begin{Theorem}[{see Kobayashi--Nomizu \cite[Chapter XII]{kn}}]\label{teoWeilhomo} Let $\pi\colon Q\rightarrow M$ be a principal bundle with structural group $L$ having Lie algebra $\mathfrak{l}$. Endow $Q$ with a connection form $\omega $ whose curvature $2$-form $($with values in $\mathfrak{l})$ is $\Omega $. Take $f\in \mathfrak{l}^{\ast }$ which is $L$-invariant $($that is, $f\circ \operatorname{Ad} (g) =f$ for all $g\in L)$. Then the $2$-form $f\circ \Omega $ is such that there exists a closed $2$-form $\widetilde{f}$ on $M$ with $f\circ \Omega =\pi ^{\ast }\widetilde{f}$. The de Rham cohomology class of $\widetilde{f}$ remains the same if the connection is changed.
\end{Theorem}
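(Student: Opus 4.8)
The plan is to run the standard Chern--Weil argument, relying only on three structural facts about the curvature form $\Omega$: it is horizontal, it is equivariant in the sense $R_g^{*}\Omega=\operatorname{Ad}(g^{-1})\circ\Omega$ for the right $L$-action $R_g$, and, coming from the structure equation $\Omega=d\omega+\tfrac12[\omega,\omega]$, it obeys the Bianchi identity $d\Omega=[\Omega,\omega]$ (up to the usual sign). Alongside these I would record the infinitesimal form of the hypothesis on $f$: differentiating $f\circ\operatorname{Ad}(g)=f$ at $g=e$ yields $f([A,B])=0$ for all $A,B\in\mathfrak{l}$, so $f$ annihilates every bracket in $\mathfrak{l}$.

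First I would show $f\circ\Omega$ descends and is closed. Since $\Omega$ is horizontal and $f$ is linear, $f\circ\Omega$ is horizontal; equivariance of $\Omega$ together with $\operatorname{Ad}$-invariance of $f$ gives $R_g^{*}(f\circ\Omega)=f\circ\operatorname{Ad}(g^{-1})\circ\Omega=f\circ\Omega$, so it is right-invariant. A horizontal, right-invariant form is basic, hence equals $\pi^{*}\widetilde{f}$ for a unique $2$-form $\widetilde{f}$ on $M$, uniqueness following from surjectivity of $d\pi$. For closedness it suffices, by injectivity of $\pi^{*}$, to check $d(f\circ\Omega)=0$ on $Q$. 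As $f$ is a fixed functional, $d(f\circ\Omega)=f\circ d\Omega=f\circ[\Omega,\omega]$ by Bianchi, and this is a sum of terms $f([\Omega(\cdot,\cdot),\omega(\cdot)])$ which all vanish by the infinitesimal invariance $f([A,B])=0$. Thus $\pi^{*}d\widetilde{f}=d\pi^{*}\widetilde{f}=0$, whence $d\widetilde{f}=0$.

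For independence of the connection, given connection forms $\omega_0,\omega_1$ I would set $\alpha=\omega_1-\omega_0$, a tensorial $\mathfrak{l}$-valued $1$-form, and interpolate by $\omega_t=\omega_0+t\alpha$, a connection for each $t\in[0,1]$ with curvature $\Omega_t$. A direct computation gives $\tfrac{d}{dt}\Omega_t=d\alpha+[\omega_t,\alpha]$, so $\tfrac{d}{dt}(f\circ\Omega_t)=f\circ d\alpha+f\circ[\omega_t,\alpha]=d(f\circ\alpha)$, the bracket term again vanishing by invariance of $f$. Integrating in $t$ gives $f\circ\Omega_1-f\circ\Omega_0=d\bigl(\int_0^1 f\circ\alpha\,dt\bigr)$, an exact basic form; since all these forms are basic, $\widetilde{f}_1-\widetilde{f}_0$ is exact on $M$ and the two cohomology classes agree.

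The only real obstacle is bookkeeping: fixing consistent sign conventions in the Bianchi identity and in the graded Leibniz rule for $d[\omega,\omega]$, and giving a clean justification that a horizontal, right-invariant form is basic, i.e.\ projectable to $M$. These points are routine but are exactly where a hasty computation can slip.
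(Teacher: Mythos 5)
Your argument is correct: it is the standard Chern--Weil proof (horizontality plus $\operatorname{Ad}$-equivariance of $\Omega$ and invariance of $f$ make $f\circ\Omega$ basic, the Bianchi identity plus the infinitesimal invariance $f([\cdot,\cdot])=0$ give closedness, and the affine homotopy $\omega_t=\omega_0+t(\omega_1-\omega_0)$ produces the exact transgression term $d(f\circ\alpha)$). The paper offers no proof of this statement---it is quoted verbatim from Kobayashi--Nomizu, Chapter~XII---and your argument is precisely the one given in that reference, so there is nothing to reconcile beyond the sign conventions you already flag.
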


As a complement to this theorem we note that if $L$ is compact then its Lie algebra $\mathfrak{l}$ is reductive and a necessary condition for the existence of an invariant $f\in \mathfrak{l}^{\ast }$, $f\neq 0$, is that $\mathfrak{l}$ has non trivial center $\mathfrak{z} ( \mathfrak{l})$, that is, $\mathfrak{l}$ is not semi-simple and is $\neq \{0\}$. If furthermore $L$ is connected then this condition is also sufficient and an invariant $f$ is given by $f(\cdot) =\langle X,\cdot \rangle $ with $X\in \mathfrak{z} ( \mathfrak{l} ) $ where $\langle \cdot,\cdot \rangle $ is an invariant inner product in $\mathfrak{l}$. If $L$ is not connected the invariant $f\in \mathfrak{l}^{\ast }$ are given by $f ( \cdot ) =\langle X,\cdot \rangle $ as well with $X\in \mathfrak{z} ( \mathfrak{l} ) $ fixed by the non-identity components of~$L$. This construction yields a map $\mathfrak{z} ( \mathfrak{l} ) \rightarrow H^{2} ( M,\mathbb{R} ) $ that depends in an isomorphic way on the invariant inner product $\langle \cdot,\cdot \rangle $ in $\mathfrak{l}$. (This map can be defined also via a negative definite form like $-\langle \cdot,\cdot \rangle $.)

Given a flag manifold $\mathbb{F}_{\Theta }=K/K_{\Theta }$ we will apply the Weil construction to the principal bundle $K\rightarrow K/K_{\Theta }$. For this bundle in which the total space is the Lie group $K$ we can take left invariant connections yielding invariant differential forms in the base space $K/K_{\Theta }$.

For a flag manifold $\mathbb{F}_{\Theta }=K/K_{\Theta }$ associated to one of the Lie algebras $\mathrm{AIII}_{1}$, $\mathrm{AIII}_{2}$, $\mathrm{DI}_{2}$ or $\mathrm{EII}$ we intend to prove that there are enough $f\in \mathfrak{k}_{\Theta }^{\ast }$ so that the $2$-forms $\widetilde{f}$ exhaust the $2$-cohomology. To this purpose it is required to describe the center of $\mathfrak{k}_{\Theta }$. When $\mathbb{F}_{\Theta }=\mathbb{F}$ is a maximal flag manifold then $\mathbb{F}=K/M$ where $M$ is the centralizer of $\mathfrak{a}$ in $K$. In the next section we discuss the center $\mathfrak{z}(\mathfrak{m}) $ of the Lie algebra $\mathfrak{m}$ of $M$.

\section[$M$-group and Satake diagrams]{$\boldsymbol{M}$-group and Satake diagrams}\label{seccenterM}

In this section we obtain preparatory results that will allow us, in the next section, to get the characteristic forms in the flag manifolds. Its purpose is to see how the Lie algebra $\mathfrak{m}$ of $M$ and its center $\mathfrak{z}(\mathfrak{m}) $ can be read off from the Satake diagram of the real form~$\mathfrak{g}$. Before starting let us write some notation and facts related to the Satake diagrams. Denote by $\mathfrak{g}_{\mathbb{C}}$ the complexification of $\mathfrak{g}$ and let $\mathfrak{u}\subset \mathfrak{g}_{\mathbb{C}}$ be a~compact real form of $\mathfrak{g}_{\mathbb{C}}$ adapted to $\mathfrak{g}$, that is, $\mathfrak{g}=\mathfrak{k}\oplus \mathfrak{s}$ is a Cartan decomposition where $\mathfrak{k}=\mathfrak{g}\cap \mathfrak{u}$ and $\mathfrak{s}=\mathfrak{g}\cap {\rm i}\mathfrak{u}$. Denote by $\sigma $ and $\tau $ the conjugations in $\mathfrak{g}_{\mathbb{C}}$ with respect to $\mathfrak{u}$ and $\mathfrak{g}$ respectively. To say that $\mathfrak{u}$ is adapted to $\mathfrak{g}$ is the same as saying that these conjugations commute and the Cartan involution $\theta =\sigma \tau =\tau \sigma $ is an automorphism leaving invariant both $\mathfrak{g}$ and $\mathfrak{u}$. The subalgebra $\mathfrak{k}$ is the fixed point set of $\theta $ implying that $\tau =-1$ on ${\rm i}\mathfrak{k}$.

Starting with the maximal abelian subspace $\mathfrak{a}\subset \mathfrak{s}$ let $\mathfrak{h}\supset \mathfrak{a}$ be a Cartan subalgebra of $\mathfrak{g}$ and complexify it to the Cartan subalgebra $\mathfrak{h}_{\mathbb{C}}\subset \mathfrak{g}_{\mathbb{C}}$. The Cartan subalgebra $\mathfrak{h}$ decomposes as $\mathfrak{h}=\mathfrak{h}_{k}\oplus \mathfrak{a}$ with $\mathfrak{h}_{k}\subset \mathfrak{k}$. The Cartan subalgebras $\mathfrak{h}$ and $\mathfrak{h}_{\mathbb{C}}$ are invariant by $\sigma $, $\tau $ and $\theta =\sigma \tau $.

Denote by $\Pi _{\mathbb{C}}$ the set of roots of $ ( \mathfrak{g}_{\mathbb{C}},\mathfrak{h}_{\mathbb{C}} ) $ and by $\Pi $ the set of restricted roots of $ ( \mathfrak{g},\mathfrak{a} ) $. Each $\alpha\in \Pi $ is the restriction to $\mathfrak{a}$ of a root in $\Pi _{\mathbb{C}}$. For $\alpha \in \mathfrak{h}_{\mathbb{C}}^{\ast }$ define $H_{\alpha}\in \mathfrak{h}_{\mathbb{C}}$ by $\alpha (\cdot) =\langle H_{\alpha },\cdot \rangle $ and denote by $\mathfrak{h}_{\mathbb{R}}$ the real subspace spanned by $H_{\alpha }$, $\alpha \in \Pi _{\mathbb{C}}$. The roots are real on $\mathfrak{a}$ and purely imaginary in $\mathfrak{h}_{k}$ so that $\mathfrak{a}=\mathfrak{h}_{\mathbb{R}}\cap \mathfrak{h}$, $\mathfrak{h}_{k}={\rm i}\mathfrak{h}_{\mathbb{R}}\cap \mathfrak{h}$ and $\mathfrak{h}_{\mathbb{R}}={\rm i}\mathfrak{h}_{k}\oplus \mathfrak{a}$. The last decomposition is orthogonal with respect to the Cartan--Killing inner product in $\mathfrak{h}_{\mathbb{R}}$ because $\tau $ is an involutive isometry satisfying $\tau =-1$ in ${\rm i}\mathfrak{h}_{k}$ and $\tau =1$ in $\mathfrak{a}$.

A root $\alpha \in \Pi _{\mathbb{C}}$ is said to be imaginary if $\alpha\circ \tau =-\alpha $. Denote by $\Pi _{\func{Im}}$ the set of imaginary roots. There are the following equivalent ways to define $\Pi _{\func{Im}}$:

\begin{enumerate}\itemsep=0pt
\item A root $\alpha \in \Pi _{\mathbb{C}}$ is imaginary if and only if it annihilates on $\mathfrak{a}$. In fact, if $H\in \mathfrak{a}$ then $\tau(H) =H$ so that $\alpha (H) =\alpha ( \tau (H)) =-\alpha (H) $. Conversely if $\alpha $ is zero on $\mathfrak{a}$ then $H_{\alpha }$ is orthogonal to $\mathfrak{a}$ in $\mathfrak{h}_{\mathbb{R}}$ so that $H_{\alpha }\in {\rm i}\mathfrak{h}_{k}$ therefore
\begin{gather*}
\alpha \circ \tau (H) =\langle H_{\alpha },\tau H\rangle=\langle \tau H_{\alpha },H\rangle =-\alpha (H)
\end{gather*}
showing that $\alpha \in \Pi _{\func{Im}}$.

\item $\alpha \in \Pi _{\mathbb{C}}$ is imaginary if and only if $H_{\alpha}\in {\rm i}\mathfrak{h}_{k}$ because the decomposition $\mathfrak{h}_{\mathbb{R}}={\rm i}\mathfrak{h}_{k}\oplus \mathfrak{a}$ is orthogonal.

\item Let $H\in \mathfrak{a}$ be regular real, that is, $\beta (H ) \neq 0$ for every $\beta \in \Pi $. Then $\alpha \in \Pi _{\mathbb{C}}$ is imaginary if and only if $\alpha (H) =0$. In fact, the roots in $\Pi $ are the restrictions to $\mathfrak{a}$ of the roots in $\Pi_{\mathbb{C}}$ and $\alpha $ is imaginary if and only if it annihilates on~$\mathfrak{a}$. For this characterization the choice of the regular element $H $ is immaterial.
\end{enumerate}

To get the Satake diagram take a regular real $H\in \mathfrak{a}$ and let $\Sigma _{\mathbb{C}}\subset \Pi _{\mathbb{C}}$ be a simple system of roots such that $\alpha (H) \geq 0$ for every $\alpha \in \Sigma _{\mathbb{C}}$. Equivalently $\Sigma _{\mathbb{C}}$ is the simple system of roots associated to a Weyl chamber $\mathfrak{h}_{\mathbb{R}}^{+}$ containing $H$ in its closure. The Satake diagram is obtained from the Dynkin diagram of $\Sigma _{\mathbb{C}}$ by painting black the imaginary roots in $\Sigma _{\mathbb{C}}$ and by joining with a double arrow two roots $\alpha,\beta \in \Sigma _{\mathbb{C}}$ whose restrictions to $\mathfrak{a}$ are equal.

The set $\Sigma _{\func{Im}}$ of imaginary roots in $\Sigma _{\mathbb{C}}$ is given by
\begin{gather*}
\Sigma _{\func{Im}}=\{\alpha \in \Sigma _{\mathbb{C}}\colon \alpha (H)=0\}.
\end{gather*}If $\beta \in \Sigma _{\mathbb{C}}\setminus \Sigma _{\func{Im}}$ then $\beta (H) >0$. Hence a positive root $\gamma $ is a linear combination of $\Sigma _{\func{Im}}$ if and only if $\gamma (H) =0$ so that $\Pi _{\func{Im}}$ is the set of roots $\langle \Sigma _{\func{Im}}\rangle $ spanned by the simple imaginary roots.

The next proposition allows to reconstruct $\mathfrak{h}_{k}$ from the Satake diagram. For its statement we use the following notation

\begin{itemize}\itemsep=0pt
\item $\mathfrak{h}_{\func{Im}}$ is the subspace spanned by ${\rm i}H_{\alpha }$ with $\alpha \in \Sigma _{\func{Im}}$ (or what is the same $\alpha \in \Pi _{\func{Im}}$).

\item $\Sigma _{\mathbb{C},\mathrm{arr}}$ is the union of pairs of simple roots in a Satake diagram that are linked by a double arrow. $\Sigma _{\mathbb{C},\mathrm{arr}}^{\bot }$ is the subset of $\Sigma _{\mathbb{C},\mathrm{arr}}$ of pairs of simple roots not linked to imaginary roots.

$\Sigma _{\mathrm{arr}}$ and $\Sigma _{\mathrm{arr}}^{\bot }$ are the restrictions to $\mathfrak{a}$ of the roots in $\Sigma _{\mathbb{C},\mathrm{arr}}$ and $\Sigma _{\mathbb{C},\mathrm{arr}}^{\bot }$ respectively.

\item $\mathfrak{h}_{\mathrm{arr}}$ is the subspace spanned by ${\rm i}H_{\gamma }$ with $\gamma $ running through the set of differences $\gamma =\alpha -\beta$ with $\{\alpha,\beta\} \in \Sigma _{\mathbb{C},\mathrm{arr}}$.
\end{itemize}

\begin{Proposition}\label{propHagak}$\mathfrak{h}_{k}=\mathfrak{h}_{\func{Im}}\oplus \mathfrak{h}_{\mathrm{arr}}$.
\end{Proposition}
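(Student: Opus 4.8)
The plan is to reduce the statement to a linear-algebra computation inside $\mh_{\R}$, exploiting the orthogonal decomposition $\mh_{\R}={\rm i}\mh_k\oplus\ma$ already established, which identifies ${\rm i}\mh_k$ with the orthogonal complement $\ma^{\bot}$ of $\ma$ in $\mh_{\R}$ relative to the Cartan--Killing form. Since multiplication by ${\rm i}$ is an $\R$-linear bijection of $\mh_{\R}$ carrying $\mh_k$ onto $\ma^{\bot}$, carrying $\mh_{\func{Im}}$ onto $\spam_{\R}\{H_\alpha\colon\alpha\in\Sigma_{\func{Im}}\}$, and carrying $\mh_{\mathrm{arr}}$ onto $\spam_{\R}\{H_\alpha-H_\beta\colon\{\alpha,\beta\}\in\Sigma_{\C,\mathrm{arr}}\}$, it is equivalent to prove
\[
\ma^{\bot}=\spam_{\R}\{H_\alpha\colon\alpha\in\Sigma_{\func{Im}}\}\oplus\spam_{\R}\{H_\alpha-H_\beta\colon\{\alpha,\beta\}\in\Sigma_{\C,\mathrm{arr}}\}.
\]
I would establish this through the three usual steps: containment of each summand in $\ma^{\bot}$, linear independence of the displayed generators, and a dimension count forcing equality.

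For containment I would invoke the characterization of imaginary roots proved above: every $\alpha\in\Sigma_{\func{Im}}$ annihilates on $\ma$, so $H_\alpha$ is orthogonal to $\ma$ and lies in $\ma^{\bot}$; and if $\{\alpha,\beta\}\in\Sigma_{\C,\mathrm{arr}}$ then, by the very definition of the arrows in the Satake diagram, the restrictions of $\alpha$ and $\beta$ to $\ma$ coincide, so $\alpha-\beta$ annihilates on $\ma$ and $H_\alpha-H_\beta\in\ma^{\bot}$. For independence I would use that $\{H_\gamma\colon\gamma\in\Sigma_{\C}\}$ is a basis of $\mh_{\R}$ (it is the image of the basis $\Sigma_{\C}$ of $\mh_{\R}^{\ast}$ under $\lambda\mapsto H_\lambda$): the roots occurring in $\Sigma_{\func{Im}}$ are imaginary while those occurring in the arrow pairs are not, and distinct arrow pairs involve distinct simple roots, so in any vanishing linear combination of the generators each basis vector $H_\gamma$ appears in at most one term and its coefficient must therefore vanish. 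This simultaneously shows that the two spans meet only in $0$ and that the whole generating set is independent; in particular $\dim\mh_{\func{Im}}=|\Sigma_{\func{Im}}|$ and $\dim\mh_{\mathrm{arr}}$ equals the number $p$ of arrow pairs.

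Finally I would match dimensions. On one side $\dim\ma^{\bot}=\rk\mg_{\C}-\dim\ma=|\Sigma_{\C}|-|\Sigma|$. On the other, the restriction map $\Sigma_{\C}\to\Sigma\cup\{0\}$ sends exactly $\Sigma_{\func{Im}}$ to $0$ and carries the remaining $|\Sigma_{\C}|-|\Sigma_{\func{Im}}|$ simple roots onto $\Sigma$, the only coincidences being the $p$ arrow pairs, whence $|\Sigma|=\big(|\Sigma_{\C}|-|\Sigma_{\func{Im}}|\big)-p$ and $\dim\ma^{\bot}=|\Sigma_{\func{Im}}|+p=\dim\mh_{\func{Im}}+\dim\mh_{\mathrm{arr}}$. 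Combined with containment and independence this yields the asserted direct sum. The step I expect to require the most care is precisely this count: it rests on the structural fact from the theory of Satake diagrams that the restrictions of the non-imaginary simple roots are exactly the simple restricted roots, and that two of them have equal restriction if and only if they are joined by an arrow (so restriction has fibers of size one or two). I would either cite this (Warner, Knapp) or derive the identity $|\Sigma|=|\Sigma_{\C}|-|\Sigma_{\func{Im}}|-p$ directly from the construction of the diagram given above.
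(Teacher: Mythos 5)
Your proof is correct and follows essentially the same route as the paper's: containment of both summands in $\mathfrak{h}_k$ via the orthogonality of ${\rm i}\mathfrak{h}_k$ and $\mathfrak{a}$ in $\mathfrak{h}_{\mathbb{R}}$, trivial intersection because $\Sigma_{\mathbb{C}}$ is a basis, and a dimension count based on reading the real rank off the Satake diagram (white unlinked roots plus arrow pairs). Your version merely spells out the restriction-map bookkeeping behind the count more explicitly than the paper does.
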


\begin{proof}As mentioned above there is the orthogonal direct sum $\mathfrak{h}_{\mathbb{R}}={\rm i}\mathfrak{h}_{k}\oplus \mathfrak{a}$ which implies that $\mathfrak{h}_{\func{Im}}$ $\subset \mathfrak{h}_{k}$. Also, if $\alpha $ and $\beta $ are simple roots linked by a double arrow then $\gamma =\alpha -\beta $ is zero on $\mathfrak{a}$ which means that $H_{\gamma }$ is orthogonal to $\mathfrak{a}$ so that ${\rm i}H_{\gamma }\in \mathfrak{h}_{k}$. Hence $\mathfrak{h}_{\mathrm{arr}}\subset \mathfrak{h}_{k}$. We have $\mathfrak{h}_{\func{Im}}\cap \mathfrak{h}_{\mathrm{arr}}=\{0\}$ because $\Sigma _{\mathbb{C}}$ is a basis. A dimension check shows that the sum is the whole space. In fact, $\dim \mathfrak{h}$ is the number of roots in $\Sigma _{\mathbb{C}}$ (rank of $\mathfrak{g}$) while $\dim \mathfrak{a}$ (real rank of $\mathfrak{g}$) is the number of white roots not linked plus $\dim \mathfrak{h}_{\mathrm{arr}}$ which is half the number of white linked roots. Finally $\dim \mathfrak{h}_{\func{Im}}$ is the number of black roots. Hence $\dim \mathfrak{h}_{k}=\dim \mathfrak{h}_{\func{Im}}+\dim \mathfrak{h}_{\mathrm{arr}}$ concluding the proof.
\end{proof}

The abelian subalgebra is one of the pieces of $\mathfrak{m}$. The other piece is the subalgebra generated by the imaginary roots which are described next.

\begin{Proposition}Let $\mathfrak{g}_{\func{Im}}$ be the subalgebra of $\mathfrak{g}_{\mathbb{C}}$ generated by the root spaces $( \mathfrak{g}_{\mathbb{C}})_{\alpha }$ with \mbox{$\alpha \in \Pi _{\func{Im}}$}. Then $\mathfrak{g}_{\func{Im}}$ is a complex semi-simple Lie algebra whose Dynkin diagram corresponds to the simple roots $\Sigma _{\func{Im}}$.

Put $\mathfrak{k}_{\func{Im}}=\mathfrak{g}_{\func{Im}}\cap \mathfrak{u}$. Then $\mathfrak{k}_{\func{Im}}$ is a compact real form of $\mathfrak{g}_{\func{Im}}$ and therefore it is semi-simple. Moreover, $\mathfrak{h}_{k}$ is a Cartan subalgebra of $\mathfrak{k}_{\func{Im}}$.
\end{Proposition}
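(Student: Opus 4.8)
The plan is to treat the three assertions in turn. For the first, I would use that, as recorded just before the statement, $\Pi_{\func{Im}}=\langle\Sigma_{\func{Im}}\rangle=\Pi_{\mathbb{C}}\cap\operatorname{span}\Sigma_{\func{Im}}$ is a closed and symmetric subsystem of $\Pi_{\mathbb{C}}$ with simple roots $\Sigma_{\func{Im}}$. The subalgebra generated by the spaces $(\mathfrak{g}_{\mathbb{C}})_{\alpha}$, $\alpha\in\Pi_{\func{Im}}$, is then the regular semisimple subalgebra attached to this subsystem: since $[(\mathfrak{g}_{\mathbb{C}})_{\alpha},(\mathfrak{g}_{\mathbb{C}})_{-\alpha}]=\mathbb{C}H_{\alpha}$ and $\Pi_{\func{Im}}$ is closed under addition of roots, one obtains $\mathfrak{g}_{\func{Im}}=\mathfrak{t}_{\func{Im}}\oplus\sum_{\alpha\in\Pi_{\func{Im}}}(\mathfrak{g}_{\mathbb{C}})_{\alpha}$ with $\mathfrak{t}_{\func{Im}}=\operatorname{span}_{\mathbb{C}}\{H_{\alpha}\colon\alpha\in\Sigma_{\func{Im}}\}$. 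This is reductive with Cartan subalgebra $\mathfrak{t}_{\func{Im}}$ and root system $\Pi_{\func{Im}}$, which spans $\mathfrak{t}_{\func{Im}}^{\ast}$; hence its center is trivial and $\mathfrak{g}_{\func{Im}}$ is semisimple with simple system $\Sigma_{\func{Im}}$, so its Dynkin diagram is the subdiagram of black nodes of the Satake diagram.

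For the second assertion I would first check that $\mathfrak{g}_{\func{Im}}$ is invariant under the conjugation $\sigma$ defining $\mathfrak{u}$. As $\sigma$ carries $(\mathfrak{g}_{\mathbb{C}})_{\alpha}$ to $(\mathfrak{g}_{\mathbb{C}})_{-\alpha}$ and $\Pi_{\func{Im}}$ is symmetric, $\sigma(\mathfrak{g}_{\func{Im}})=\mathfrak{g}_{\func{Im}}$, so $\mathfrak{k}_{\func{Im}}=\mathfrak{g}_{\func{Im}}\cap\mathfrak{u}$ is the fixed algebra of $\sigma$ on the complex semisimple $\mathfrak{g}_{\func{Im}}$, that is, a compact real form; in particular it is compact and semisimple. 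For the Cartan assertion the plan is to compute the centralizer in $\mathfrak{k}_{\func{Im}}$ of the toral subalgebra $\mathfrak{h}_{k}$, whose elements are elliptic and mutually commuting. The elements $\mathrm{i}H_{\alpha}$ ($\alpha\in\Sigma_{\func{Im}}$) lie in $\mathfrak{h}_{k}$ and in $\mathfrak{t}_{\func{Im}}$, hence in $\mathfrak{k}_{\func{Im}}$. If a root vector $X_{\gamma}$, $\gamma\in\Pi_{\func{Im}}$, commuted with $\mathfrak{h}_{k}$, then $\gamma(\mathrm{i}H_{\alpha})=\mathrm{i}\langle\gamma,\alpha\rangle$ would vanish for all $\alpha\in\Sigma_{\func{Im}}$; but $\gamma\neq 0$ lies in $\operatorname{span}\Sigma_{\func{Im}}$, on which the Cartan--Killing form is nondegenerate, a contradiction. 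Hence $\mathfrak{z}_{\mathfrak{g}_{\func{Im}}}(\mathfrak{h}_{k})=\mathfrak{t}_{\func{Im}}$, and intersecting with $\mathfrak{u}$ the centralizer of $\mathfrak{h}_{k}$ in $\mathfrak{k}_{\func{Im}}$ equals $\mathfrak{h}_{k}\cap\mathfrak{k}_{\func{Im}}$, a maximal toral and therefore Cartan subalgebra of $\mathfrak{k}_{\func{Im}}$.

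The one genuinely delicate point is the identification of this Cartan subalgebra with $\mathfrak{h}_{k}$ exactly as worded. By Proposition~\ref{propHagak} we have $\mathfrak{h}_{k}=\mathfrak{h}_{\func{Im}}\oplus\mathfrak{h}_{\mathrm{arr}}$, and the computation above yields $\mathfrak{h}_{k}\cap\mathfrak{k}_{\func{Im}}=\mathfrak{h}_{k}\cap\mathfrak{g}_{\func{Im}}=\mathfrak{h}_{\func{Im}}$ as the Cartan subalgebra of $\mathfrak{k}_{\func{Im}}$; the summand $\mathfrak{h}_{\mathrm{arr}}$, which is present exactly when the Satake diagram carries double arrows, does not lie in $\mathfrak{g}_{\func{Im}}$. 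Thus the statement is literal when there are no arrows (so $\mathfrak{h}_{k}=\mathfrak{h}_{\func{Im}}$), while in general $\mathfrak{h}_{k}$ is a Cartan subalgebra of the full centralizer $\mathfrak{m}=\mathfrak{z}_{\mathfrak{k}}(\mathfrak{a})$ and it is its imaginary part $\mathfrak{h}_{k}\cap\mathfrak{g}_{\func{Im}}$ that serves as the Cartan subalgebra of $\mathfrak{k}_{\func{Im}}$. The single computation I would carry out in full is that $\mathfrak{h}_{\mathrm{arr}}$ centralizes $\mathfrak{g}_{\func{Im}}$ --- equivalently, that each arrow difference $\alpha-\beta$ is orthogonal to every imaginary root --- which follows from the Satake involution being an isometry that fixes $\Sigma_{\func{Im}}$ and interchanges the roots of each arrow pair.
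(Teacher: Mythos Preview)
Your treatment of the first two assertions is correct and parallels the paper's, with a slightly cleaner packaging: the paper cites an external lemma to the effect that $(\mathfrak{g}_{\mathbb{C}})_{\alpha}\subset\mathfrak{k}+{\rm i}\mathfrak{k}$ for imaginary $\alpha$ and then invokes the ``Weyl construction applied simultaneously to $\mathfrak{g}_{\mathbb{C}}$ and $\mathfrak{g}_{\func{Im}}$,'' whereas your direct $\sigma$-invariance argument ($\sigma$ sends $(\mathfrak{g}_{\mathbb{C}})_{\alpha}$ to $(\mathfrak{g}_{\mathbb{C}})_{-\alpha}$ and $\Pi_{\func{Im}}$ is symmetric) gives the same conclusion without the citation. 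You are also right to flag the last clause: the paper's own proof concludes that $\mathfrak{h}_{\func{Im}}$, not $\mathfrak{h}_{k}$, is a Cartan subalgebra of $\mathfrak{k}_{\func{Im}}$, so the ``$\mathfrak{h}_{k}$'' in the statement is a slip, and your reading (literal only when $\mathfrak{h}_{\mathrm{arr}}=0$) agrees with what the authors actually prove.

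One genuine error in your final paragraph: it is \emph{not} true in general that $\mathfrak{h}_{\mathrm{arr}}$ centralizes $\mathfrak{g}_{\func{Im}}$, i.e., that every arrow difference $\alpha-\beta$ is orthogonal to all imaginary roots. In $\mathrm{AIII}_{1}$ with $q-p\geq 3$, the arrow pair $\gamma=\mu_{p}-\theta_{1}$, $\delta=\theta_{q-p}-\mu_{2p}$ is adjacent to the imaginary chain, and $\langle\gamma-\delta,\theta_{1}-\theta_{2}\rangle\neq 0$. The Satake involution need not fix $\Sigma_{\func{Im}}$ pointwise (in $\mathrm{AIII}_{1}$ it is the flip of the $A_{p+q-1}$ diagram, which reverses the imaginary chain), so your isometry argument does not go through. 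What \emph{is} true, and is exactly what the paper records in the Remark following Proposition~\ref{propCentrom}, is that ${\rm i}H_{\alpha-\beta}\in\mathfrak{h}_{\func{Im}}^{\bot}$ when $\{\alpha,\beta\}\in\Sigma_{\mathbb{C},\mathrm{arr}}^{\bot}$, i.e., for arrow pairs not linked to imaginary roots. This distinction between $\Sigma_{\mathbb{C},\mathrm{arr}}$ and $\Sigma_{\mathbb{C},\mathrm{arr}}^{\bot}$ is precisely what drives the description of $\mathfrak{z}(\mathfrak{m})$ in the sequel, so you should drop the centralizing claim rather than try to rescue the literal statement.
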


\begin{proof}The first statement holds because $\Pi _{\func{Im}}$ is a~root system generated by $\Sigma _{\func{Im}}$ which is a~simple system of roots. Regarding to the subalgebra $\mathfrak{k}_{\func{Im}}$ it can be proved that $( \mathfrak{g}_{\mathbb{C}}) _{\alpha }$ is contained in $\mathfrak{k}+{\rm i}\mathfrak{k}$ if $\alpha $ is imaginary (see \cite[Lemma~14.6]{alglie}). Hence $\mathfrak{g}_{\func{Im}}\subset \mathfrak{k}+{\rm i}\mathfrak{k}$ implying that $\mathfrak{k}_{\func{Im}}\subset \mathfrak{k}$. By the Weyl construction of the compact real form applied simultaneously to $\mathfrak{g}_{\mathbb{C}} $ and~$\mathfrak{g}_{\func{Im}}$ it follows that the
intersection $\mathfrak{k}_{\func{Im}}=\mathfrak{g}_{\func{Im}}\cap \mathfrak{u}$ is a compact real form of $\mathfrak{g}_{\func{Im}}$. Finally, $\Sigma _{\func{Im}}$ is a~simple system of roots of $\mathfrak{g}_{\func{Im}}$ so that $\mathfrak{h}_{\func{Im}}$, which is spanned by ${\rm i}H_{\alpha }$ with $\alpha \in \Sigma _{\func{Im}}$, is a~Cartan subalgebra of~$\mathfrak{k}_{\func{Im}}$.
\end{proof}

Now we combine the above pieces to write down the Lie algebra $\mathfrak{m}$ from the Satake diagram of~$\mathfrak{g}$.

\begin{Proposition}\label{propSpanCenterm}$\mathfrak{m}=\mathfrak{k}_{\func{Im}}\oplus \mathfrak{z}(\mathfrak{m}) $ with $\mathfrak{k}_{\func{Im}}$ semi-simple and $\mathfrak{z}(\mathfrak{m}) $ the orthogonal complement of~$\mathfrak{h}_{\func{Im}}$ in~$\mathfrak{h}_{k}$.
\end{Proposition}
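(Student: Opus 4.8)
The plan is to compute $\mathfrak{m}$ explicitly by passing to the complexification and reading off the contribution of the imaginary roots. Recall that $\mathfrak{m}$ is the centralizer of $\mathfrak{a}$ in $\mathfrak{k}$ and that the centralizer of $\mathfrak{a}$ in $\mathfrak{g}$ equals $\mathfrak{m}\oplus\mathfrak{a}$. Writing $\mathfrak{g}_{\mathbb{C}}=\mathfrak{h}_{\mathbb{C}}\oplus\sum_{\alpha\in\Pi_{\mathbb{C}}}(\mathfrak{g}_{\mathbb{C}})_{\alpha}$, an element $H+\sum_{\alpha}X_{\alpha}$ commutes with $\mathfrak{a}$ exactly when $X_{\alpha}=0$ for every $\alpha$ that does not annihilate $\mathfrak{a}$, that is, when only imaginary root spaces occur. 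Since $\mathfrak{a}\subset\mathfrak{g}$, this centralizer is the complexification of $\mathfrak{m}\oplus\mathfrak{a}$, so subtracting $\mathfrak{a}_{\mathbb{C}}$ and using $\mathfrak{h}=\mathfrak{h}_{k}\oplus\mathfrak{a}$ I would obtain
\[
\mathfrak{m}_{\mathbb{C}}=(\mathfrak{h}_{k})_{\mathbb{C}}\oplus\sum_{\alpha\in\Pi_{\func{Im}}}(\mathfrak{g}_{\mathbb{C}})_{\alpha}.
\]

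Next I would split off the semisimple part. By the previous proposition $(\mathfrak{k}_{\func{Im}})_{\mathbb{C}}=\mathfrak{g}_{\func{Im}}=(\mathfrak{h}_{\func{Im}})_{\mathbb{C}}\oplus\sum_{\alpha\in\Pi_{\func{Im}}}(\mathfrak{g}_{\mathbb{C}})_{\alpha}$, and $\mathfrak{k}_{\func{Im}}\subset\mathfrak{k}$ centralizes $\mathfrak{a}$ because each of its root spaces annihilates $\mathfrak{a}$ and $\mathfrak{h}_{\func{Im}}\subset\mathfrak{h}_{k}$ is orthogonal to $\mathfrak{a}$; hence $\mathfrak{k}_{\func{Im}}\subseteq\mathfrak{m}$. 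Let $\mathfrak{c}$ denote the orthogonal complement of $\mathfrak{h}_{\func{Im}}$ in $\mathfrak{h}_{k}$, so that $\mathfrak{h}_{k}=\mathfrak{h}_{\func{Im}}\oplus\mathfrak{c}$ by definition and $\mathfrak{c}\subseteq\mathfrak{h}_{k}\subseteq\mathfrak{m}$. Comparing the two complexifications then gives $\mathfrak{m}_{\mathbb{C}}=(\mathfrak{k}_{\func{Im}})_{\mathbb{C}}\oplus\mathfrak{c}_{\mathbb{C}}$, and since $\mathfrak{k}_{\func{Im}}\cap\mathfrak{h}_{k}=\mathfrak{h}_{\func{Im}}$ the real subspaces $\mathfrak{k}_{\func{Im}}$ and $\mathfrak{c}$ meet only in $0$; a dimension count yields the vector space decomposition $\mathfrak{m}=\mathfrak{k}_{\func{Im}}\oplus\mathfrak{c}$, with $\mathfrak{k}_{\func{Im}}$ semisimple.

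It then remains to identify $\mathfrak{c}$ with the center $\mathfrak{z}(\mathfrak{m})$. The key point is that every $H\in\mathfrak{c}$ is annihilated by every imaginary root: for $\alpha\in\Pi_{\func{Im}}$ the vector $H_{\alpha}$ lies in the real span of the $H_{\beta}$ with $\beta\in\Sigma_{\func{Im}}$, which up to the factor $\mathrm{i}$ is precisely $\mathfrak{h}_{\func{Im}}$, so $\alpha(H)=\langle H_{\alpha},H\rangle=0$ by the orthogonality defining $\mathfrak{c}$. Therefore $[H,X_{\alpha}]=\alpha(H)X_{\alpha}=0$ for every $X_{\alpha}\in(\mathfrak{g}_{\mathbb{C}})_{\alpha}$ with $\alpha\in\Pi_{\func{Im}}$, while $[H,(\mathfrak{h}_{k})_{\mathbb{C}}]=0$ trivially, so $H$ centralizes $\mathfrak{m}_{\mathbb{C}}$ and $\mathfrak{c}\subseteq\mathfrak{z}(\mathfrak{m})$. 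Consequently $\mathfrak{c}$ is a central ideal and $\mathfrak{k}_{\func{Im}}$ is an ideal; since $\mathfrak{k}_{\func{Im}}$ is semisimple its center vanishes, and for a direct sum of ideals the center is the sum of the centers, giving $\mathfrak{z}(\mathfrak{m})=\mathfrak{z}(\mathfrak{k}_{\func{Im}})\oplus\mathfrak{c}=\mathfrak{c}$, which completes the proof.

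The step I expect to demand the most care is this last orthogonality computation, because $\mathfrak{h}_{\func{Im}}$ is spanned by the vectors $\mathrm{i}H_{\beta}$ whereas the brackets are governed by the $H_{\alpha}$ themselves, and the Cartan--Killing form changes sign between $\mathfrak{h}_{k}\subset\mathfrak{k}$ and $\mathrm{i}\mathfrak{h}_{k}\subset\mathfrak{h}_{\mathbb{R}}$. Keeping track of these factors of $\mathrm{i}$ and of the sign of the form, so that orthogonality in $\mathfrak{h}_{k}$ genuinely forces $\alpha(H)=0$ for imaginary $\alpha$, is the only delicate bookkeeping; everything else reduces to the root space decomposition and the elementary fact that the center of a direct sum of ideals is the sum of their centers.
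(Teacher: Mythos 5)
Your argument is correct and follows essentially the same route as the paper: identify the centralizer of $\mathfrak{a}$ via the root space decomposition, split $\mathfrak{h}_{k}$ as $\mathfrak{h}_{\func{Im}}\oplus\mathfrak{h}_{\func{Im}}^{\bot}$, observe that imaginary roots vanish on the orthogonal complement so that it commutes with $\mathfrak{k}_{\func{Im}}$, and conclude from the semisimplicity of $\mathfrak{k}_{\func{Im}}$. The paper simply works with $\mathfrak{m}=\mathfrak{z}\cap\mathfrak{k}$ directly rather than passing through $\mathfrak{m}_{\mathbb{C}}$, but the content is the same, and your careful handling of the factors of ${\rm i}$ in the orthogonality step is sound.
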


\begin{proof}The centralizer of $\mathfrak{a}$ in $\mathfrak{g}_{\mathbb{C}}$ is $\mathfrak{z}=\mathfrak{h}_{\mathbb{C}}\oplus \sum\limits_{\alpha \in \Pi _{\func{Im}}} ( \mathfrak{g}_{\mathbb{C}} ) _{\alpha }$. Thus $\mathfrak{m}=\mathfrak{z}\cap \mathfrak{k}=\mathfrak{h}_{k}\oplus \mathfrak{k}_{\func{Im}} $. Since $\mathfrak{h}_{\func{Im}}\subset \mathfrak{k}_{\func{Im}}$ we have $\mathfrak{m}=\mathfrak{h}_{\func{Im}}^{\bot }\oplus \mathfrak{k}_{\func{Im}}$ where $\mathfrak{h}_{\func{Im}}^{\bot }$ is the orthogonal complement of $\mathfrak{h}_{\func{Im}}$ in $\mathfrak{h}_{k}$. Any imaginary root is zero on $\mathfrak{h}_{\func{Im}}^{\bot }$ so that this subspace commutes with $\mathfrak{k}_{\func{Im}}$. This implies that $\mathfrak{z}(\mathfrak{m}) =\mathfrak{h}_{\func{Im}}^{\bot }$ because $\mathfrak{k}_{\func{Im}}$ is semi-simple.
\end{proof}

By Proposition~\ref{propHagak} we have $\mathfrak{h}_{k}=\mathfrak{h}_{\func{Im}}\oplus \mathfrak{h}_{\mathrm{arr}}$ so that $\mathfrak{h}_{\func{Im}}^{\bot }\neq \{0\}$ if and only if $\mathfrak{h}_{\mathrm{arr}}\neq \{0\}$
which means that the Satake diagram of $\mathfrak{g}$ has double arrows. Diagrams with arrows are called outer diagrams (because $\theta $ is an outer automorphism of $\mathfrak{g}_{\mathbb{C}}$). By the above proposition
$\mathfrak{z}(\mathfrak{m}) =\mathfrak{h}_{\func{Im}}^{\bot }$ so we get the following case where $\mathfrak{z}(\mathfrak{m}) $ is not trivial.

\begin{Proposition}\label{propCentrom}$\mathfrak{m}$ has non-trivial center if and only if the Satake diagram of $\mathfrak{g}$ is outer and $\mathfrak{z} ( \mathfrak{m}) =\mathfrak{h}_{\func{Im}}^{\bot }$.
\end{Proposition}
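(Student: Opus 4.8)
The plan is to deduce the statement almost directly from Propositions~\ref{propSpanCenterm} and~\ref{propHagak}, since those already isolate the center of $\mathfrak{m}$ inside the Cartan piece $\mathfrak{h}_k$. First I would record that Proposition~\ref{propSpanCenterm} identifies $\mathfrak{z}(\mathfrak{m})$ with the orthogonal complement $\mathfrak{h}_{\func{Im}}^{\bot}$ of $\mathfrak{h}_{\func{Im}}$ in $\mathfrak{h}_k$; this hands us the displayed formula $\mathfrak{z}(\mathfrak{m})=\mathfrak{h}_{\func{Im}}^{\bot}$ for free, so the entire content of the proposition reduces to establishing the equivalence that $\mathfrak{z}(\mathfrak{m})\neq\{0\}$ if and only if the Satake diagram of $\mathfrak{g}$ is outer.

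Next I would translate non-triviality of $\mathfrak{h}_{\func{Im}}^{\bot}$ into a statement about $\mathfrak{h}_{\mathrm{arr}}$. By Proposition~\ref{propHagak} there is a direct sum $\mathfrak{h}_k=\mathfrak{h}_{\func{Im}}\oplus\mathfrak{h}_{\mathrm{arr}}$, whence a dimension count gives $\dim\mathfrak{h}_{\func{Im}}^{\bot}=\dim\mathfrak{h}_k-\dim\mathfrak{h}_{\func{Im}}=\dim\mathfrak{h}_{\mathrm{arr}}$. Therefore $\mathfrak{z}(\mathfrak{m})=\mathfrak{h}_{\func{Im}}^{\bot}\neq\{0\}$ precisely when $\mathfrak{h}_{\mathrm{arr}}\neq\{0\}$. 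I would stress that I do not need the stronger claim $\mathfrak{h}_{\func{Im}}^{\bot}=\mathfrak{h}_{\mathrm{arr}}$: the decomposition in Proposition~\ref{propHagak} is not asserted to be orthogonal, so the clean dimension equality is exactly what is required.

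Finally I would connect $\mathfrak{h}_{\mathrm{arr}}\neq\{0\}$ to the combinatorics of the diagram. By its very definition $\mathfrak{h}_{\mathrm{arr}}$ is spanned by the vectors $\mathrm{i}H_{\gamma}$ with $\gamma=\alpha-\beta$ ranging over the pairs $\{\alpha,\beta\}\in\Sigma_{\mathbb{C},\mathrm{arr}}$ joined by a double arrow; these differences are nonzero and linearly independent because $\Sigma_{\mathbb{C}}$ is a basis. Hence $\mathfrak{h}_{\mathrm{arr}}\neq\{0\}$ is equivalent to the Satake diagram possessing at least one double arrow, that is, to its being outer. Chaining the three steps yields both the equivalence and the formula $\mathfrak{z}(\mathfrak{m})=\mathfrak{h}_{\func{Im}}^{\bot}$ already recorded in the first step.

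The argument is essentially a bookkeeping corollary of the two preceding propositions, so I expect no serious obstacle. The only point demanding a little care is the second step: one must resist identifying $\mathfrak{h}_{\func{Im}}^{\bot}$ with $\mathfrak{h}_{\mathrm{arr}}$ and instead argue purely by dimensions, since orthogonality of the splitting in Proposition~\ref{propHagak} has not been established and is not needed for the conclusion.
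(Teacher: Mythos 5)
Your proposal is correct and follows essentially the same route as the paper: the formula $\mathfrak{z}(\mathfrak{m})=\mathfrak{h}_{\func{Im}}^{\bot}$ is taken directly from Proposition~\ref{propSpanCenterm}, and the equivalence with outerness is obtained from the splitting $\mathfrak{h}_{k}=\mathfrak{h}_{\func{Im}}\oplus\mathfrak{h}_{\mathrm{arr}}$ of Proposition~\ref{propHagak}, exactly as in the paragraph preceding the proposition in the text. Your extra care in arguing via dimensions rather than identifying $\mathfrak{h}_{\func{Im}}^{\bot}$ with $\mathfrak{h}_{\mathrm{arr}}$ is a harmless refinement of the same argument.
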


\begin{Remark}For $\alpha,\beta\in \Sigma_{\mathbb{C},\mathrm{arr}}^\bot$ and $\gamma=\alpha-\beta$, ${\rm i}H_\gamma\in \mathfrak{h}_{\func{Im}}^{\bot }=\mathfrak{z }(\mathfrak{m })$. In fact, if $\delta$ is an imaginary root,
then $\langle {\rm i}H_\gamma,{\rm i}H_\delta \rangle=0$ since $\alpha$ and $\beta$ are orthogonal (non-adjacent) to any imaginary root.
\end{Remark}

The diagrams $\mathrm{AIII}_{1}$, $\mathrm{AIII}_{2}$, $\mathrm{DI}_{2}$ and $\mathrm{EII}$ appearing in Theorem~\ref{tedeRham} whose Lie algebras have flag manifolds with non-trivial $2$-cohomology are outer diagrams. We
reproduce them below.

\centerline{\includegraphics{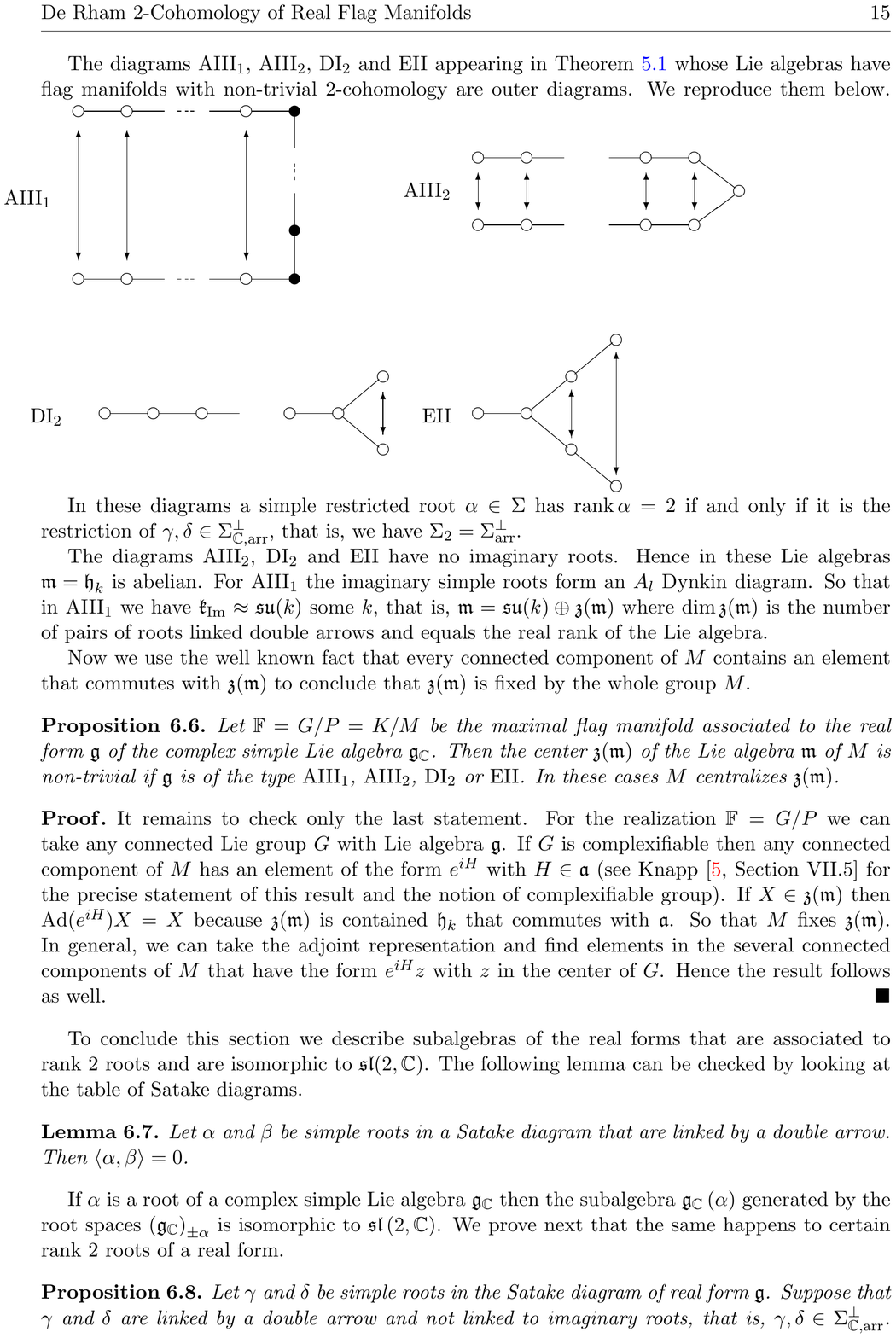}}

In these diagrams a simple restricted root $\alpha \in \Sigma $ has $\operatorname{rank} \alpha =2$ if and only if it is the restriction of $\gamma,\delta \in \Sigma _{\mathbb{C},\mathrm{arr}}^{\bot }$, that is, we have $\Sigma_{2}=\Sigma _{\mathrm{arr}}^{\bot }$.

The diagrams $\mathrm{AIII}_{2}$, $\mathrm{DI}_{2}$ and $\mathrm{EII}$ have no imaginary roots. Hence in these Lie algebras $\mathfrak{m=h}_{k}$ is abelian. For $\mathrm{AIII}_{1}$ the imaginary simple roots form an $A_{l}$
Dynkin diagram. So that in $\mathrm{AIII}_{1}$ we have $\mathfrak{k}_{\func{Im}}\approx \mathfrak{su}(k) $ some $k$, that is, $\mathfrak{m}=\mathfrak{su}(k) \oplus \mathfrak{z}(\mathfrak{m}) $ where $\dim \mathfrak{z}(\mathfrak{m}) $ is the number of pairs of roots linked double arrows and equals the real rank of the Lie algebra.

Now we use the well known fact that every connected component of $M$ contains an element that commutes with $\mathfrak{z} ( \mathfrak{m} ) $ to conclude that $\mathfrak{z}(\mathfrak{m}) $ is fixed by the whole group $M$.

\begin{Proposition}Let $\mathbb{F}=G/P=K/M$ be the maximal flag manifold associated to the real form $\mathfrak{g}$ of the complex simple Lie algebra $\mathfrak{g}_{\mathbb{C}}$. Then the center $\mathfrak{z}(\mathfrak{m}) $ of the Lie algebra $\mathfrak{m}$ of $M$ is non-trivial if $\mathfrak{g}$ is of the type $\mathrm{AIII}_{1}$, $\mathrm{AIII}_{2}$, $\mathrm{DI}_{2}$ or $\mathrm{EII}$. In these cases $M$ centralizes $\mathfrak{z}(\mathfrak{m}) $.
\end{Proposition}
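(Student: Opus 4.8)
The plan is to treat the two claims separately. For the non-triviality of $\mathfrak{z}(\mathfrak{m})$, I would simply observe that each of $\mathrm{AIII}_{1}$, $\mathrm{AIII}_{2}$, $\mathrm{DI}_{2}$ and $\mathrm{EII}$ is an outer diagram (it carries double arrows), as recorded in the diagrams displayed above. Proposition~\ref{propCentrom} then yields at once both that $\mathfrak{z}(\mathfrak{m})\neq\{0\}$ and that $\mathfrak{z}(\mathfrak{m})=\mathfrak{h}_{\func{Im}}^{\bot}$. The one structural input I will carry forward is that, by Proposition~\ref{propSpanCenterm}, this center is the orthogonal complement of $\mathfrak{h}_{\func{Im}}$ inside $\mathfrak{h}_{k}$, so in particular $\mathfrak{z}(\mathfrak{m})\subset\mathfrak{h}_{k}\subset\mathfrak{h}_{\mathbb{C}}$.

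For the second claim I must upgrade ``$M_{0}$ centralizes $\mathfrak{z}(\mathfrak{m})$'' to all of $M$. The identity component is routine: since $\mathfrak{z}(\mathfrak{m})$ is the center of $\mathfrak{m}=\operatorname{Lie}(M)$ we have $[\mathfrak{m},\mathfrak{z}(\mathfrak{m})]=0$, whence $\operatorname{Ad}(\exp X)Z=e^{\ad X}Z=Z$ for every $X\in\mathfrak{m}$ and $Z\in\mathfrak{z}(\mathfrak{m})$; as $M_{0}$ is generated by the elements $\exp X$, it fixes $\mathfrak{z}(\mathfrak{m})$ pointwise. It therefore suffices to exhibit in each connected component of $M$ a single element centralizing $\mathfrak{z}(\mathfrak{m})$: if $g=fm$ with $m\in M_{0}$ and $f$ such a representative, then $\operatorname{Ad}(g)=\operatorname{Ad}(f)\operatorname{Ad}(m)$ fixes $\mathfrak{z}(\mathfrak{m})$.

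To produce these representatives I would invoke the well-known structure of $M=Z_{K}(\mathfrak{a})$: the component group $M/M_{0}$ is represented by a finite subgroup $F$ lying inside $\exp({\rm i}\mathfrak{a})$, generated by the elements $\exp(\pi{\rm i}H)$ with $H$ the coroots of the restricted roots (see Knapp~\cite{knapp}). Since ${\rm i}\mathfrak{a}\subset{\rm i}\mathfrak{h}_{\mathbb{R}}\subset\mathfrak{h}_{\mathbb{C}}$ and $\mathfrak{h}_{\mathbb{C}}$ is abelian, each $f=\exp(Y)\in F$ has $Y\in\mathfrak{h}_{\mathbb{C}}$, so $\operatorname{Ad}(f)=e^{\ad Y}$ fixes $\mathfrak{h}_{\mathbb{C}}$ pointwise; in particular $\operatorname{Ad}(f)$ fixes $\mathfrak{z}(\mathfrak{m})\subset\mathfrak{h}_{k}\subset\mathfrak{h}_{\mathbb{C}}$. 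Combined with the identity-component step, this shows that $M=M_{0}F$ centralizes $\mathfrak{z}(\mathfrak{m})$.

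The routine pieces are the two $e^{\ad(\cdot)}$ computations and the fact that torus elements act trivially on the Cartan subalgebra $\mathfrak{h}_{\mathbb{C}}$. The genuine obstacle, which the statement glosses as ``well known'', is the structural description $M=M_{0}F$ with representatives inside $\exp({\rm i}\mathfrak{a})$: this is exactly what guarantees a centralizing representative in every component. For the four cases at hand one could, if a self-contained argument were preferred, verify it directly from the fact that $M/M_{0}$ is an elementary abelian $2$-group generated by the elements $\exp(\pi{\rm i}H)$ above, each of which visibly lies in $\exp({\rm i}\mathfrak{a})$ and hence fixes $\mathfrak{h}_{\mathbb{C}}$.
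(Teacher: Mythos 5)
Your proposal is correct and follows essentially the same route as the paper: both reduce the problem to exhibiting, in each connected component of $M$, a representative lying in $\exp({\rm i}\mathfrak{a})$ (the fact from Knapp about $M=M_0F$), and then observe that such elements act trivially on $\mathfrak{z}(\mathfrak{m})\subset\mathfrak{h}_k$ because $\mathfrak{h}_k$ and $\mathfrak{a}$ commute inside the Cartan subalgebra. The only point the paper adds that you omit is the caveat that the Knapp representatives require $G$ to be complexifiable, with a brief remark that in general one passes to the adjoint representation and uses representatives of the form ${\rm e}^{{\rm i}H}z$ with $z$ central in $G$ — a minor technicality that does not affect the substance of your argument.
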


\begin{proof}It remains to check only the last statement. For the realization $\mathbb{F}=G/P$ we can take any connected Lie group $G$ with Lie algebra $\mathfrak{g}$. If $G$ is complexifiable then any connected component of $M$ has an element of the form ${\rm e}^{{\rm i}H}$ with $H\in \mathfrak{a}$ (see Knapp \cite[Section~VII.5]{knapp} for the precise statement of this result and the notion of complexifiable group). If $X\in \mathfrak{z}(\mathfrak{m}) $ then $\operatorname{Ad} \big({\rm e}^{{\rm i}H} \big) X=X$ because $\mathfrak{z} (\mathfrak{m} ) $ is contained $\mathfrak{h}_{k}$ that commutes with $\mathfrak{a}$. So that $M$ fixes $\mathfrak{z}(\mathfrak{m}) $. In general, we can take the adjoint representation and find elements in the several connected components of $M$ that have the form ${\rm e}^{{\rm i}H}z$ with $z$ in the center of $G$. Hence the result follows as well.
\end{proof}

To conclude this section we describe subalgebras of the real forms that are associated to rank~$2$ roots and are isomorphic to $\mathfrak{sl}(2,\mathbb{C} ) $. The following lemma can be checked by looking at the
table of Satake diagrams.

\begin{Lemma}Let $\alpha $ and $\beta $ be simple roots in a Satake diagram that are linked by a double arrow. Then $\langle \alpha,\beta \rangle =0$.
\end{Lemma}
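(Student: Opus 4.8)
The plan is to reduce the statement to the elementary fact that two distinct simple roots $\alpha,\beta$ of $\Pi_{\mathbb C}$ satisfy $\langle\alpha,\beta\rangle\le 0$, with $\langle\alpha,\beta\rangle<0$ precisely when $\alpha$ and $\beta$ are adjacent in the Dynkin diagram, equivalently (looking at the $\alpha$-string through $\beta$, which starts at $\beta$ because $\beta-\alpha$ is never a root) when $\alpha+\beta\in\Pi_{\mathbb C}$. Thus proving $\langle\alpha,\beta\rangle=0$ is the same as showing that an arrow pair can never be adjacent. First I would record the meaning of the double arrow: by definition it joins two simple roots whose restrictions to $\mathfrak a$ coincide, so setting $\lambda=\bar\alpha=\bar\beta$ gives a single nonzero restricted root $\lambda\in\Pi$ underneath the pair.

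The main step is to rule out adjacency, and here I would use the setting in which the lemma is applied, namely the arrow pairs of $\Sigma_{\mathbb C,\mathrm{arr}}^{\bot}$ whose common restriction is a rank $2$ restricted root (this is the identification $\Sigma_2=\Sigma_{\mathrm{arr}}^{\bot}$ recorded just above, and it is exactly these pairs that produce the $\mathfrak{sl}(2,\mathbb C)$ subalgebras). For such a pair $\operatorname{rank}\lambda=2$ forces $\dim\mathfrak g_{2\lambda}=0$, that is $2\lambda\notin\Pi$. If $\alpha$ and $\beta$ were adjacent, then $\alpha+\beta$ would lie in $\Pi_{\mathbb C}$, and its restriction would be $\overline{\alpha+\beta}=\bar\alpha+\bar\beta=2\lambda$, a nonzero restriction and hence an element of $\Pi$; this contradicts $2\lambda\notin\Pi$. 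Therefore $\alpha$ and $\beta$ are non-adjacent and $\langle\alpha,\beta\rangle=0$.

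Alternatively, and this is the route suggested by the phrasing ``checked by looking at the table'', I would confirm the conclusion by direct inspection of the four relevant Satake diagrams $\mathrm{AIII}_1$, $\mathrm{AIII}_2$, $\mathrm{DI}_2$, $\mathrm{EII}$ reproduced above: in each case the involution defining the arrows swaps the ends of a type $A$ string, the two prongs of the fork in type $D$, or the outer symmetric pairs of $E_6$, and in every instance the two swapped nodes are not joined by an edge, giving $\langle\alpha,\beta\rangle=0$ at once. The hard part is precisely the exclusion of adjacency: two arrow-joined simple roots can in principle be adjacent (this occurs for the central short-root pair in type $A$, where the underlying restricted root has rank $\ge 3$ and the pair is not among those used here), so the argument must invoke either the rank $2$ property of $\lambda$ or the explicit shape of the diagrams to guarantee that the pairs under consideration are non-adjacent.
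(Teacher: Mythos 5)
Your second route is exactly the paper's proof: the authors dispose of this lemma with the single remark that it ``can be checked by looking at the table of Satake diagrams,'' so direct inspection of the four relevant diagrams is all that is intended. Your first route is genuinely different and, in one respect, sharper. As literally stated the lemma is not true for every arrow-linked pair in every Satake diagram: in $\mathfrak{su}(p,p+1)$ the central pair $\{\mu_p-\theta_1,\,\theta_1-\mu_{2p}\}$ is joined by a double arrow and is adjacent in the $A_{2p}$ diagram, so $\langle\alpha,\beta\rangle\neq0$ there; the common restriction $\lambda_p$ has rank $3$ and the pair is excluded from $\Sigma_{\mathbb{C},\mathrm{arr}}^{\bot}$ as used in Section~\ref{secEsseu}, which is consistent with your diagnosis that adjacency can only be excluded for the pairs actually employed. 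Your rank-$2$ argument is correct: $\operatorname{rank}\lambda=2$ forces $\dim\mathfrak{g}_{2\lambda}=0$, whereas adjacency would put $\alpha+\beta$ in $\Pi_{\mathbb{C}}$ with nonzero restriction $2\lambda$, hence $2\lambda\in\Pi$, a contradiction; this gives a conceptual reason for orthogonality precisely on the pairs needed in Proposition~\ref{propEsseeleC}. The one point to make explicit if you take this route is where the hypothesis $\operatorname{rank}\lambda=2$ comes from: Proposition~\ref{propEsseeleC} presents $\operatorname{rank}\alpha=2$ as a consequence of $\gamma,\delta\in\Sigma_{\mathbb{C},\mathrm{arr}}^{\bot}$, and that derivation leans on the lemma, so to avoid circularity you must take the rank datum from the multiplicity tables (equivalently from the identification $\Sigma_2=\Sigma_{\mathrm{arr}}^{\bot}$, which the paper asserts by inspection just before the lemma). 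Either way some table-checking is unavoidable; your version localizes it to root multiplicities rather than to diagram adjacencies, and has the merit of explaining why the excluded adjacent pairs are exactly those sitting over roots of rank $\geq3$.
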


If $\mathbb{\alpha }$ is a root of a complex simple Lie algebra $\mathfrak{g}_{\mathbb{C}}$ then the subalgebra $\mathfrak{g}_{\mathbb{C}} (\alpha) $ generated by the root spaces $( \mathfrak{g}_{\mathbb{C}}) _{\pm \alpha }$ is isomorphic to $\mathfrak{sl}( 2,\mathbb{C}) $. We prove next that the same happens to certain rank~$2$ roots of a real form.

\begin{Proposition}\label{propEsseeleC}Let $\gamma $ and $\delta $ be simple roots in the Satake diagram of real form $\mathfrak{g}$. Suppose that~$\gamma $ and~$\delta $ are linked by a double arrow and not linked to imaginary roots, that is, $\gamma,\delta \in \Sigma _{\mathbb{C},\mathrm{arr}}^{\bot }$. If $\alpha $ denotes their common restrictions to $\mathfrak{a}$ then $\operatorname{rank} \alpha =2$. Let $\mathfrak{g} (\alpha) $ be the subalgebra of~$\mathfrak{g}$ generated by the root spaces $\mathfrak{g}_{\pm \alpha }$. Then $\mathfrak{g}(\alpha) $ is isomorphic to the realification $\mathfrak{sl}(2,\mathbb{C}) _{R}$ of~$\mathfrak{sl}(2,\mathbb{C}) $.

The subspace $\mathfrak{h}_{\alpha }$ spanned over $\mathbb{R}$ by $H_{\alpha }\in \mathfrak{a}$ and ${\rm i}H_{\gamma -\delta }\in \mathfrak{h}_{k}$ is a Cartan subalgebra of $\mathfrak{g}(\alpha) $.
\end{Proposition}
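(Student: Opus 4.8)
The plan is to identify $\mathfrak{g}(\alpha)$ by passing to the complexification and then cutting down by the conjugation $\tau$. First I would record the two inputs that make the configuration rigid: by the preceding Lemma $\langle\gamma,\delta\rangle=0$, and since $\gamma,\delta\in\Sigma_{\C,\mathrm{arr}}^{\bot}$ are white nodes not joined to any black node, they are orthogonal to every imaginary root. I would also use the standard reading of the double arrow from the Satake diagram: the involution $-\theta$ interchanges the two nodes joined by an arrow up to a combination of imaginary simple roots, and for nodes not linked to black ones this imaginary correction is absent, so $-\theta\gamma=\delta$ and $-\theta\delta=\gamma$. Since $\sigma=-1$ on $\mathfrak{h}_{\R}$ we have $\tau=-\theta$ there, hence $\theta H_\gamma=-H_\delta$, and because $\mathrm{proj}_{\ma}=\frac{1}{2}(1-\theta)$ the restriction of $\gamma$ to $\ma$ is represented by $H_\alpha=\frac{1}{2}(H_\gamma+H_\delta)$. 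The only positive roots restricting to $\alpha$ are then $\gamma$ and $\delta$, and $\gamma+\delta$ is not a root because $\langle\gamma,\delta\rangle=0$, so $2\alpha$ is not a restricted root; this gives $\operatorname{rank}\alpha=\dim\mg_\alpha=2$, in agreement with Table~\ref{table}.

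Next I would complexify. The subalgebra $\mathfrak{g}(\alpha)_{\C}$ is generated by the four root spaces $(\mathfrak{g}_{\C})_{\pm\gamma}$ and $(\mathfrak{g}_{\C})_{\pm\delta}$. Each pair generates a copy of $\mathfrak{sl}(2,\C)$, namely $\mathfrak{g}_{\C}(\gamma)$ and $\mathfrak{g}_{\C}(\delta)$, and because $\langle\gamma,\delta\rangle=0$ neither $\gamma+\delta$ nor $\gamma-\delta$ is a root, so these two copies commute. Hence $\mathfrak{g}(\alpha)_{\C}\cong\mathfrak{sl}(2,\C)\oplus\mathfrak{sl}(2,\C)$, with Cartan subalgebra $\C H_\gamma\oplus\C H_\delta$.

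The decisive step is to compute how $\tau$ acts. Since $\tau$ carries $(\mathfrak{g}_{\C})_\lambda$ to $(\mathfrak{g}_{\C})_{\lambda\circ\tau}$ and $\lambda\circ\tau=-\theta\lambda$ on $\mathfrak{h}_{\R}$, the identity $-\theta\gamma=\delta$ shows that $\tau$ maps $(\mathfrak{g}_{\C})_\gamma$ onto $(\mathfrak{g}_{\C})_\delta$, and likewise for the opposite root spaces. Thus $\tau$ interchanges the two simple summands $\mathfrak{g}_{\C}(\gamma)$ and $\mathfrak{g}_{\C}(\delta)$. The fixed-point set of a conjugation that swaps two isomorphic complex summands is the graph of an antilinear Lie algebra isomorphism between them, and as a real Lie algebra it is isomorphic to a single summand viewed over $\R$; therefore $\mathfrak{g}(\alpha)=\mathfrak{g}(\alpha)_{\C}^{\,\tau}\cong\mathfrak{sl}(2,\C)_{R}$, the realification of $\mathfrak{sl}(2,\C)$. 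The main obstacle is exactly this point: one must be sure $\tau$ swaps the two factors rather than preserving each, which would instead produce $\mathfrak{sl}(2,\R)\oplus\mathfrak{sl}(2,\R)$ or a compact form; this is guaranteed precisely by the hypothesis $\gamma,\delta\in\Sigma_{\C,\mathrm{arr}}^{\bot}$, through the clean identity $-\theta\gamma=\delta$ with no imaginary correction.

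Finally I would check the Cartan subalgebra claim. Both $H_\alpha=\frac{1}{2}(H_\gamma+H_\delta)$ and ${\rm i}H_{\gamma-\delta}={\rm i}(H_\gamma-H_\delta)$ lie in the complex Cartan subalgebra $\C H_\gamma\oplus\C H_\delta$ of $\mathfrak{g}(\alpha)_{\C}$, and they commute. Moreover $H_\alpha\in\ma\subset\mg$ is $\tau$-fixed, while $H_{\gamma-\delta}\in{\rm i}\mathfrak{h}_k$ gives ${\rm i}H_{\gamma-\delta}\in\mathfrak{h}_k\subset\mk\subset\mg$, also $\tau$-fixed, so $\mathfrak{h}_\alpha\subset\mathfrak{g}(\alpha)$. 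Since the complex span of $H_\gamma+H_\delta$ and ${\rm i}(H_\gamma-H_\delta)$ is all of $\C H_\gamma\oplus\C H_\delta$, the real subspace $\mathfrak{h}_\alpha$ is a real form of a Cartan subalgebra of $\mathfrak{g}(\alpha)_{\C}$; as a real subalgebra whose complexification is a Cartan subalgebra is itself a Cartan subalgebra, I conclude that $\mathfrak{h}_\alpha$ is a Cartan subalgebra of $\mathfrak{g}(\alpha)$.
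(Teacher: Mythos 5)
Your proof is correct, and its overall skeleton coincides with the paper's: both arguments complexify, use the orthogonality $\langle\gamma,\delta\rangle=0$ from the preceding lemma to split $\mathfrak{g}(\alpha)_{\mathbb{C}}$ as $\mathfrak{g}_{\mathbb{C}}(\gamma)\oplus\mathfrak{g}_{\mathbb{C}}(\delta)\cong\mathfrak{sl}(2,\mathbb{C})\oplus\mathfrak{sl}(2,\mathbb{C})$, and then obtain $\mathfrak{h}_\alpha$ as the real points of the complex Cartan subalgebra $\mathrm{span}_{\mathbb{C}}\{H_\gamma,H_\delta\}$. Where you genuinely diverge is at the identification of the real form. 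The paper disposes of this step by asserting that $\mathfrak{sl}(2,\mathbb{C})_R$ is ``the only non-compact real form'' of $\mathfrak{sl}(2,\mathbb{C})\oplus\mathfrak{sl}(2,\mathbb{C})$ --- a claim that, read literally, needs qualification, since $\mathfrak{sl}(2,\mathbb{R})\oplus\mathfrak{sl}(2,\mathbb{R})$ and $\mathfrak{sl}(2,\mathbb{R})\oplus\mathfrak{su}(2)$ are also non-compact real forms (what is true is that the realification is the only one that is simple over $\mathbb{R}$, equivalently the only one whose conjugation swaps the two factors). You instead compute the conjugation directly: from the Satake-diagram identity $-\theta\gamma=\delta$ (valid with no imaginary correction precisely because $\gamma,\delta\in\Sigma_{\mathbb{C},\mathrm{arr}}^{\bot}$) you deduce that $\tau$ interchanges $(\mathfrak{g}_{\mathbb{C}})_{\pm\gamma}$ and $(\mathfrak{g}_{\mathbb{C}})_{\pm\delta}$, hence swaps the two $\mathfrak{sl}(2,\mathbb{C})$ summands, so the fixed-point set is the graph of an antilinear isomorphism and therefore $\mathfrak{sl}(2,\mathbb{C})_R$. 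This is more work, but it buys a self-contained justification of exactly the point the paper leaves implicit, and it makes visible why the hypothesis $\gamma,\delta\in\Sigma_{\mathbb{C},\mathrm{arr}}^{\bot}$ is what rules out the split or mixed real forms. Your additional verification that $\operatorname{rank}\alpha=2$ (via $\gamma,\delta$ being the only roots restricting to $\alpha$ and $\gamma+\delta$ not being a root) is likewise consistent with what the paper takes from the hypotheses. I see no gap.
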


\begin{proof}We have $\mathfrak{g}_{\alpha }=( ( \mathfrak{g}_{\mathbb{C}}) _{\gamma }+( \mathfrak{g}_{\mathbb{C}}) _{\delta}) \cap \mathfrak{g}$ and the same for $-\alpha $. By assumption $\gamma $ and $\delta $ are the only roots restricting to $\alpha $ hence $\mathfrak{g}(\alpha) =\mathfrak{g}_{\mathbb{C}}( \gamma,\delta ) \cap \mathfrak{g}$ where $\mathfrak{g}_{\mathbb{C}}(\gamma,\delta) $ is the Lie algebra generated by $( \mathfrak{g}_{\mathbb{C}}) _{\pm \gamma }$ and $( \mathfrak{g}_{\mathbb{C}}) _{\pm \delta }$. By the previous lemma $\langle \gamma,\delta\rangle =0$ which implies that $\mathfrak{g}_{\mathbb{C}}( \gamma,\delta ) =\mathfrak{g}_{\mathbb{C}}(\gamma) \oplus\mathfrak{g}_{\mathbb{C}}(\delta) $ with $[\mathfrak{g}_{\mathbb{C}}(\gamma),\mathfrak{g}_{\mathbb{C}}( \delta) ]=0$ so that $\mathfrak{g}_{\mathbb{C}}( \gamma,\delta)
$ is isomorphic to the direct sum $\mathfrak{sl}( 2,\mathbb{C}) \oplus \mathfrak{sl}( 2,\mathbb{C}) $. Hence $\mathfrak{g}(\alpha) =( \mathfrak{g}_{\mathbb{C}}(\gamma)\oplus \mathfrak{g}_{\mathbb{C}}(\delta) ) \cap \mathfrak{g}$ and $\mathfrak{g}_{\mathbb{C}}(\gamma) \oplus \mathfrak{g}_{\mathbb{C}}(\delta) $ is the complexification of $\mathfrak{g}(\alpha) $. It follows that $\mathfrak{g}(\alpha) $ is isomorphic to $\mathfrak{sl}( 2,\mathbb{C}) _{R}$ because this is the only non-compact real form of $\mathfrak{sl} ( 2,\mathbb{C} ) \oplus \mathfrak{sl}(2,\mathbb{C}) $.

The subspace $\mathfrak{h}_{\gamma,\delta }=\operatorname{span}_{\mathbb{C}}\{H_{\gamma },H_{\delta }\}$ is a Cartan subalgebra of $\mathfrak{g}_{\mathbb{C}}(\gamma) \oplus \mathfrak{g}_{\mathbb{C}}(\delta) $. Hence
\begin{gather*}
\mathfrak{h}_{\alpha }=\mathfrak{h}_{\gamma,\delta }\cap \mathfrak{g}=\mathfrak{h}_{\gamma,\delta }\cap ( \mathfrak{h}_{k}\oplus \mathfrak{a})
\end{gather*}
is a Cartan subalgebra of $\mathfrak{g}(\alpha) $.
\end{proof}

\section{Characteristic forms}\label{secCharacClass}

In this section we apply the Weil homomorphism to get representatives of the de Rham cohomo\-lo\-gy of the flag manifolds of the Lie algebras $\mathrm{AIII}_{1}$, $\mathrm{AIII}_{2}$, $\mathrm{DI}_{2}$ and $\mathrm{EII}$ appearing in Theorem~\ref{tedeRham}.

Connections in the principal bundles $K\rightarrow \mathbb{F}_{\Theta}=K/K_{\Theta }$ will be obtained from the following general standard construction.

\begin{Proposition}Let $Q$ be a Lie group and $L\subset Q$ a closed subgroup with Lie algebras $\mathfrak{q}$ and $\mathfrak{l}$ respectively. Suppose there exists a subspace $\mathfrak{p}\subset \mathfrak{q}$ with $\mathfrak{q}=\mathfrak{l}\oplus \mathfrak{p}$ and $\operatorname{Ad}(g) \mathfrak{p}=\mathfrak{p}$ for all $g\in L$. Then
\begin{enumerate}\itemsep=0pt
\item[$1.$] The left translations $\operatorname{Hor}(q) =L_{q\ast } ( \mathfrak{p}) $, $q\in Q$, are horizontal spaces for a connection in the principal bundle $\pi\colon Q\rightarrow Q/L$.

\item[$2.$] The connection form $\omega $ is given by $\omega _{q}\big(X^{l}(q) \big) =PX\in \mathfrak{l}$ where $X^{l}$ is the left invariant vector field defined by $X\in \mathfrak{q}$ and $P\colon \mathfrak{q}\rightarrow \mathfrak{l}$ is the projection against the decomposition $\mathfrak{q}=\mathfrak{l}\oplus \mathfrak{p}$.

\item[$3.$] The curvature form $\Omega $ is completely determined by its restriction to $\mathfrak{p}$ $($at the origin$)$ which is given by $\Omega \big( X^{l},Y^{l}\big) =P [ X,Y ] $.
\end{enumerate}

The horizontal distribution $\operatorname{Hor}$ as well as $\omega $ and $\Omega $ are invariant by left translations $(L_{g\ast }\operatorname{Hor} \allowbreak=\operatorname{Hor}$, $L_{g}^{\ast }\omega =\omega $ and $L_{g}^{\ast }\Omega =\Omega )$.
\end{Proposition}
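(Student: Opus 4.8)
The plan is to verify the three parts in turn, treating this as the standard construction of the canonical invariant connection attached to a reductive splitting. First I would identify the vertical distribution of $\pi\colon Q\to Q/L$. The fibre through $q$ is the right coset $qL$, and the fundamental vector field attached to $X\in\mathfrak{l}$ is $q\mapsto \frac{d}{dt}\big|_{t=0}q\exp(tX)=L_{q\ast}X$; hence $\operatorname{Ver}(q)=L_{q\ast}(\mathfrak{l})$. Since $L_{q\ast}\colon T_eQ\to T_qQ$ is a linear isomorphism and $\mathfrak{q}=\mathfrak{l}\oplus\mathfrak{p}$, applying $L_{q\ast}$ gives $T_qQ=\operatorname{Ver}(q)\oplus\operatorname{Hor}(q)$ with $\operatorname{Hor}(q)=L_{q\ast}(\mathfrak{p})$. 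Smoothness of the distribution is automatic because $\mathfrak{p}$ is a fixed subspace transported by the smooth map $(q,X)\mapsto L_{q\ast}X$, so the only remaining connection axiom is equivariance under the right $L$-action.

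The crux of part (1) is this equivariance, and it is where the hypothesis $\operatorname{Ad}(g)\mathfrak{p}=\mathfrak{p}$ enters. I would use the identity $R_g\circ L_q=L_{qg}\circ C_{g^{-1}}$, where $C_{g^{-1}}(x)=g^{-1}xg$ is conjugation; differentiating at the identity and using $(C_{g^{-1}})_{\ast,e}=\operatorname{Ad}(g^{-1})$ yields $R_{g\ast}\circ L_{q\ast}=L_{qg\ast}\circ\operatorname{Ad}(g^{-1})$. Therefore $R_{g\ast}\operatorname{Hor}(q)=L_{qg\ast}\operatorname{Ad}(g^{-1})(\mathfrak{p})=L_{qg\ast}(\mathfrak{p})=\operatorname{Hor}(qg)$, the middle equality being exactly the $\operatorname{Ad}$-invariance of $\mathfrak{p}$ (with $g^{-1}\in L$). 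This establishes that $\operatorname{Hor}$ is a principal connection.

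For part (2), I would split $X=PX+(X-PX)$ with $PX\in\mathfrak{l}$ and $X-PX\in\mathfrak{p}$. Then $L_{q\ast}(PX)=\widetilde{PX}(q)$ is the fundamental vertical vector of $PX$ while $L_{q\ast}(X-PX)\in\operatorname{Hor}(q)$; by the defining properties of a connection form ($\omega$ reproduces the generators of $\operatorname{Ver}$ and annihilates $\operatorname{Hor}$) one reads off $\omega_q(X^l(q))=PX$. For part (3), take $X,Y\in\mathfrak{p}$, so that $X^l,Y^l$ are everywhere horizontal and $\omega(X^l)=PX=0$, $\omega(Y^l)=PY=0$ as functions on $Q$. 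The structure equation $\Omega=d\omega+\frac{1}{2}[\omega,\omega]$ then reduces to $\Omega(X^l,Y^l)=d\omega(X^l,Y^l)$, and the intrinsic formula $d\omega(X^l,Y^l)=X^l(\omega(Y^l))-Y^l(\omega(X^l))-\omega([X^l,Y^l])$ collapses, since the first two terms vanish and $[X^l,Y^l]=[X,Y]^l$, to $\omega([X,Y]^l)=P[X,Y]$ (up to the overall sign fixed by the chosen convention for $d$ and the structure equation). Because $\Omega$ is horizontal and tensorial it vanishes whenever an argument is vertical, and $\operatorname{Hor}(q)=L_{q\ast}\mathfrak{p}$ is spanned by the horizontal left-invariant fields, so $\Omega$ is indeed determined by these values on $\mathfrak{p}$ at the origin.

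Finally, the left-invariance statements follow formally: $L_{g\ast}\operatorname{Hor}(q)=L_{g\ast}L_{q\ast}(\mathfrak{p})=L_{gq\ast}(\mathfrak{p})=\operatorname{Hor}(gq)$ from $L_g\circ L_q=L_{gq}$; the equality $L_g^\ast\omega=\omega$ follows by evaluating on the left-invariant fields $X^l$ (which satisfy $L_{g\ast}X^l=X^l$) and using part (2); and $L_g^\ast\Omega=\Omega$ then follows because $d$ and the bracket commute with pullback. I expect the only genuinely substantive step to be the right-equivariance in part (1)---the single place where $\operatorname{Ad}(g)\mathfrak{p}=\mathfrak{p}$ is used---while the rest is careful bookkeeping; the one point demanding attention is matching the sign convention in the curvature formula.
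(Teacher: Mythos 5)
The paper offers no proof of this proposition at all: it is introduced as a ``general standard construction'' (the canonical invariant connection of a reductive splitting, in the spirit of Kobayashi--Nomizu), so there is no argument of the authors' to compare yours against. Your verification is correct and complete, and it supplies exactly what the paper leaves implicit: the identification $\operatorname{Ver}(q)=L_{q\ast}(\mathfrak{l})$ via fundamental vector fields, the right-equivariance $R_{g\ast}\operatorname{Hor}(q)=\operatorname{Hor}(qg)$ obtained from $R_{g\ast}\circ L_{q\ast}=L_{qg\ast}\circ\operatorname{Ad}\big(g^{-1}\big)$ --- correctly isolated as the one place where $\operatorname{Ad}(g)\mathfrak{p}=\mathfrak{p}$ is needed --- and the computation of $\omega$ and $\Omega$ on left-invariant fields. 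The only point worth making explicit is the one you already flag: with the convention $d\omega(X,Y)=X\omega(Y)-Y\omega(X)-\omega([X,Y])$ and $\omega\big(X^{l}\big)=\omega\big(Y^{l}\big)=0$ for $X,Y\in\mathfrak{p}$, the computation yields $\Omega\big(X^{l},Y^{l}\big)=-P[X,Y]$ (or $-\tfrac12 P[X,Y]$ in the Kobayashi--Nomizu normalization of $d$), whereas the statement records $P[X,Y]$ without the sign; this is harmless for the paper's later use, since the characteristic forms $f_{Z}$ are ultimately normalized by the choice of the constant $c$, but it is the sign convention one must fix if the formula is quoted verbatim. A cosmetic remark: the claim that $\Omega$ is ``determined by its restriction to $\mathfrak{p}$ at the origin'' is cleanest if you observe that $\Omega\big(X^{l},Y^{l}\big)=-\omega\big([X,Y]^{l}\big)$ is a constant function on $Q$, so left-invariance of $\Omega$ need not be invoked before it is proved.
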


Left invariant connections on the bundles $K\rightarrow \mathbb{F}_{\Theta }=K/K_{\Theta }$ given by this proposition will be used to get characteristic forms on the flag manifolds $\mathbb{F}_{\Theta }$.

We work out first the maximal flag manifolds $\mathbb{F}=K/M$. Take the root space decomposition
\begin{gather*}
\mathfrak{g}=\mathfrak{m}\oplus \mathfrak{a}\oplus \sum_{\alpha \in \Pi }\mathfrak{g}_{\alpha }
\end{gather*}
and for a positive restricted root $\alpha \in \Pi ^{+}$ write $\mathfrak{k}_{\alpha }= ( D_{\alpha }+D_{-\alpha } ) \cap \mathfrak{k}$ where $D_{\alpha }=\mathfrak{g}_{\alpha /2}+\mathfrak{g}_{\alpha }+\mathfrak{g}_{2\alpha }$. We have $\mathfrak{k}=\mathfrak{m}\oplus \sum\limits_{\alpha \in \Pi ^{+}}\mathfrak{k}_{\alpha }$ so that
\begin{gather*}
\mathfrak{p}=\sum_{\alpha \in \Pi ^{+}}\mathfrak{k}_{\alpha }
\end{gather*}
complements $\mathfrak{m}$ in $\mathfrak{k}$. For any $m\in M$ there is the invariance $\operatorname{Ad}(m) \mathfrak{p}=\mathfrak{p}$ because $\operatorname{Ad}(m) \mathfrak{g}_{\alpha }=\mathfrak{g}_{\alpha }$ for every root $\alpha $. Hence $\mathfrak{p}$ defines a left invariant connection in the bundle $K\rightarrow K/M$. Its curvature form $\Omega $ is defined at the identity by the projection into $\mathfrak{m}$ of the bracket. Since $[\mathfrak{g}_{\alpha },\mathfrak{g}_{\beta }]\subset \mathfrak{g}_{\alpha +\beta }$ it follows that $\Omega ( X,Y) =0$ if $X\in \mathfrak{g}_{\alpha }$, $Y\in \mathfrak{g}_{\beta }$ with $\alpha \neq \beta $. Hence $\Omega $ (at the origin) is the direct sum of $2$-forms $\Omega ^{\alpha }$ on the spaces $\mathfrak{k}_{\alpha }$:
\begin{gather*}
\Omega =\sum_{\alpha \in \Pi ^{+}}\Omega ^{\alpha },
\end{gather*}%
where $\Omega ^{\alpha }$ is the projection into $\mathfrak{m}$ of the bracket in $\mathfrak{k}_{\alpha }$.

If $Z\in \mathfrak{z}(\mathfrak{m}) $ then by the Weil homomorphism theorem the left invariant $2$-form $\langle Z,\Omega (\cdot,\cdot ) \rangle $ in~$K$ is the pull back of a closed differential form in $\mathbb{F}=K/M$ that we denote by $f_{Z}$. The tangent space at the origin identifies with $\mathfrak{p}$ where $f_{Z}$ is given by
\begin{gather*}
f_{Z}(X,Y) =\sum_{\alpha \in \Pi ^{+}}\langle Z,\Omega ^{\alpha}(X,Y) \rangle.
\end{gather*}%
We write $f_{Z} ( S_{r_{\alpha }} ) $ for the value of $f_{Z}$ in the Schubert cell $S_{r_{\alpha }}$ given by the duality between $H^{2}( \mathbb{F},\mathbb{R}) $ and $H_{2}( \mathbb{F},\mathbb{R}) $.

The following example of $\mathbb{C}P^{1}\approx S^{2}$ will be used to compute the values of characteristic forms in the Schubert $2$-cells.

\begin{Example} Let us look at the characteristic forms at the complex projective line $\mathbb{C}P^{1}\approx S^{2}$ obtained by the action of $\mathrm{SU}(2) $ so that $S^{2}=\mathrm{SU}(2) /T$ where $T$ is the group of diagonal matrices $\operatorname{diag}\big\{{\rm e}^{{\rm i}t},{\rm e}^{-{\rm i}t}\big\}$, $t\in \mathbb{R}$. The Lie algebra of $T$ is $\mathfrak{t}=\{H_{t}=\operatorname{diag}\{{\rm i}t,-{\rm i}t\}\colon t\in \mathbb{R}\}$ and the subspace
\begin{gather*}
\mathfrak{p}=\left\{X_{z}=\left(
\begin{matrix}
0 & z \\
-\overline{z} & 0
\end{matrix}%
\right) \colon z\in \mathbb{C}\right\}
\end{gather*}%
complements $\mathfrak{t}$ in $\mathfrak{su}(2) $. The bracket in $\mathfrak{p}$ is given by
\begin{gather*}
[ X_{z},X_{w}] =\left(
\begin{matrix}
-z\overline{w}+\overline{z}w & 0 \\
0 & z\overline{w}-\overline{z}w
\end{matrix}%
\right) =2\left(
\begin{matrix}
-{\rm i}\func{Im}z\overline{w} & 0 \\
0 & {\rm i}\func{Im}z\overline{w}
\end{matrix}
\right).
\end{gather*}
Hence by taking the appropriate inner product in $\mathfrak{t}$ we have that $f_{H_{1}}$ is the only invariant $2$-form in~$S^{2}$ that in the origin satisfies $f_{H_{1}} ( X_{1},X_{i} ) =1$. Thus $f_{H_{1}}$ is the unique (up to scale) $\mathrm{SU}(2) $-invariant volume form in~$S^{2}$. We can normalize the inner product in~$\mathfrak{t}$ so that the integral of~$f_{H_{1}}$ over $S^{2}$ equals to~$1$.
\end{Example}

Now we can compute the basis of $H^{2}( \mathbb{F},\mathbb{R}) $ dual to the Schubert cells spanning $H_{2} ( \mathbb{F},\mathbb{R} ) $ when $\mathbb{F}$ is the maximal flag manifold of one of the Lie algebras $\mathrm{AIII}_{1}$, $\mathrm{AIII}_{2}$, $\mathrm{DI}_{2}$ and $\mathrm{EII}$. Recall that by Proposition~\ref{propCentrom} for these Lie algebras $\mathfrak{z}(\mathfrak{m}) =\mathfrak{h}_{\func{Im}}^{\bot }\neq \{0\}$. On the other hand, $H_{2} ( \mathbb{F},\mathbb{R} ) $ is spanned by the $2$-cells $S_{r_{\alpha }}\approx S^{2}$ with $\alpha $ running through $\Sigma _{\mathrm{arr}}^{\bot }$ as in Proposition~\ref{propEsseeleC}.

To get a basis of $H^{2}(\mathbb{F},\mathbb{R}) $ dual to the basis of homology given by the Schubert cells we define the Weil map $\mathfrak{z}(\mathfrak{m}) \rightarrow H^{2}( \mathbb{F},\mathbb{R}) $ by means of a suitable normalization $ ( \cdot,\cdot ) _{c}=c\langle \cdot,\cdot \rangle $ of an invariant inner product in $\mathfrak{k}$. The normalizing constant $c$ is chosen so that $f_{{\rm i}H_{\gamma -\delta }} ( S_{r_{\alpha }} ) =1$ for every simple root $\alpha \in \Sigma _{\mathrm{arr}}^{\bot }$ where $\gamma,\delta \in \Sigma _{\mathbb{C},\mathrm{arr}}^{\bot }$ are such that $\alpha =\gamma _{\vert \mathfrak{a} }=\delta _{\vert \mathfrak{a}}$. The choice of $c$ is possible because the Satake diagrams $\mathrm{AIII}_{1}$, $\mathrm{AIII}_{2}$, $\mathrm{DI}_{2}$ and $\mathrm{EII}$ have only simple links and hence the roots are all of the same length.

To state the next theorem we take the basis $\mathcal{B}$ of $\mathfrak{h}_{k}$ given by
\begin{gather*}
\mathcal{B}=\big\{{\rm i}H_{\gamma _{1}-\delta _{1}}^{N},\ldots,{\rm i}H_{\gamma _{k}-\delta _{k}}^{N}\big\}\cup \big\{{\rm i}H_{\gamma -\delta }^{N}\big\}\cup \big\{{\rm i}H_{\mu_{1}}^{N},\ldots,{\rm i}H_{\mu _{s}}^{N}\big\},
\end{gather*}
where for $Z\in \mathfrak{h}_{k}$, $Z^{N}=Z/(Z,Z) _{c}$ and we are using the following notation:
\begin{enumerate}\itemsep=0pt
\item $\{\mu _{1},\ldots,\mu _{s}\}$ are the imaginary simple roots so that $\{{\rm i}H_{\mu _{1}},\ldots,{\rm i}H_{\mu _{s}}\}$ is a basis of $\mathfrak{h}_{\func{Im}}$.

\item $\gamma _{j}$, $\delta _{j}$, $j=1,\ldots,k$ are the pairs of roots in $\Sigma _{\mathbb{C},\mathrm{arr}}^{\bot }$, that is, $\gamma _{j}$ is linked to $\delta _{j}$ by a double arrow and both are not linked to imaginary
roots.

\item $\gamma$, $\delta $ is the only pair of roots in $\Sigma _{\mathbb{C},\mathrm{arr}}$ that are linked to imaginary roots (which occurs only in $\mathrm{AIII}_{1}$).
\end{enumerate}

Now let $\mathcal{B}^{\bot }$ be the dual basis of $\mathcal{B}$ (with
respect to the normalized form $(\cdot,\cdot) _{c}$) and
denote by $\{Z_{1},\ldots,Z_{k}\}$ the first $k$ elements of $\mathcal{B}%
^{\bot }$ that correspond to the roots in $\Sigma _{\mathrm{arr}}^{\bot }$.

\begin{Theorem}\label{teo.baseZ} Let $\{Z_{1},\ldots,Z_{k}\}$ be as above. Then $\{f_{Z_{1}},\ldots, f_{Z_{k}}\}$ is the basis of $H^{2}( \mathbb{F},\mathbb{R}) $ dual to the basis $\{S_{r_{\alpha }}\colon \alpha \in \Sigma _{\mathrm{arr}}^{\bot }\}$ of $H_{2}(\mathbb{F},\mathbb{R}) $.
\end{Theorem}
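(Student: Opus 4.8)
The plan is to reduce the whole statement to a pointwise computation on each Schubert $2$-sphere $S_{r_\alpha}$, $\alpha\in\Sigma_{\mathrm{arr}}^{\bot}$, and then to read off the result from the defining property of the dual basis $\mathcal{B}^{\bot}$. Fix such an $\alpha$, with preimages $\gamma,\delta\in\Sigma_{\mathbb{C},\mathrm{arr}}^{\bot}$. By Proposition~\ref{propEsseeleC} the subalgebra $\mathfrak{g}(\alpha)$ is isomorphic to $\mathfrak{sl}(2,\mathbb{C})_{R}$, and since it is invariant under the Cartan involution its compact part $\mathfrak{g}(\alpha)\cap\mathfrak{k}$ is a copy of $\mathfrak{su}(2)$. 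Using the Cartan subalgebra $\mathfrak{h}_\alpha=\mathbb{R}H_\alpha\oplus\mathbb{R}\,{\rm i}H_{\gamma-\delta}$ from that proposition, the torus of this $\mathfrak{su}(2)$ is the line $\mathbb{R}\cdot{\rm i}H_{\gamma-\delta}\subset\mathfrak{m}$ and its orthogonal complement is exactly $\mathfrak{k}_\alpha\subset\mathfrak{p}$. Thus $S_{r_\alpha}$ is modelled on $\mathrm{SU}(2)/T=\mathbb{C}P^1$, with tangent space at the origin identified with $\mathfrak{k}_\alpha$.

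First I would restrict $f_Z$ to $S_{r_\alpha}$. Since $f_Z(X,Y)=\sum_{\beta\in\Pi^+}(Z,\Omega^\beta(X,Y))_c$ and the curvature splits as a direct sum of the $\Omega^\beta$ on the spaces $\mathfrak{k}_\beta$, only the term $\beta=\alpha$ survives on vectors $X,Y\in\mathfrak{k}_\alpha$, so $f_Z|_{S_{r_\alpha}}=(Z,\Omega^\alpha(\cdot,\cdot))_c$. Next, $\Omega^\alpha(X,Y)=P[X,Y]$ with $[X,Y]\in\mathfrak{g}(\alpha)\cap\mathfrak{k}=\mathfrak{su}(2)$, and the projection $P$ onto $\mathfrak{m}$ sends this into the torus line $\mathbb{R}\cdot{\rm i}H_{\gamma-\delta}$. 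Writing $\Omega^\alpha(X,Y)=g(X,Y)\,{\rm i}H_{\gamma-\delta}$ for a scalar $2$-form $g$ — the pull-back of the $\mathbb{C}P^1$ area form of the Example — gives $f_Z|_{S_{r_\alpha}}=(Z,{\rm i}H_{\gamma-\delta})_c\,g$, whence
\begin{gather*}
f_Z(S_{r_\alpha})=(Z,{\rm i}H_{\gamma-\delta})_c\int_{S_{r_\alpha}}g.
\end{gather*}
Here I would invoke the normalization of the constant $c$, chosen precisely so that $f_{{\rm i}H_{\gamma-\delta}}(S_{r_\alpha})=1$; this forces $\int_{S_{r_\alpha}}g=({\rm i}H_{\gamma-\delta},{\rm i}H_{\gamma-\delta})_c^{-1}$ and therefore
\begin{gather*}
f_Z(S_{r_\alpha})=\frac{(Z,{\rm i}H_{\gamma-\delta})_c}{({\rm i}H_{\gamma-\delta},{\rm i}H_{\gamma-\delta})_c}=\big(Z,{\rm i}H_{\gamma-\delta}^{N}\big)_c .
\end{gather*}

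With this formula the theorem is immediate. Applying it to $\alpha=\alpha_j$ (preimages $\gamma_j,\delta_j$) and $Z=Z_i$ yields $f_{Z_i}(S_{r_{\alpha_j}})=(Z_i,{\rm i}H_{\gamma_j-\delta_j}^{N})_c$. Because ${\rm i}H_{\gamma_1-\delta_1}^{N},\dots,{\rm i}H_{\gamma_k-\delta_k}^{N}$ are the first $k$ members of the basis $\mathcal{B}$ and $\{Z_1,\dots,Z_k\}$ are the corresponding members of its dual basis $\mathcal{B}^{\bot}$ for $(\cdot,\cdot)_c$, this pairing equals $\delta_{ij}$. Since $\{S_{r_\alpha}\colon\alpha\in\Sigma_{\mathrm{arr}}^{\bot}\}$ is a basis of the $k$-dimensional space $H_{2}(\mathbb{F},\mathbb{R})$ and $H^{2}(\mathbb{F},\mathbb{R})$ is its dual of the same dimension, the relation $f_{Z_i}(S_{r_{\alpha_j}})=\delta_{ij}$ shows at once that $\{f_{Z_1},\dots,f_{Z_k}\}$ is the dual basis.

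The step I expect to be the main obstacle is the second paragraph: justifying rigorously that $\Omega^\alpha$ takes values in the single line $\mathbb{R}\cdot{\rm i}H_{\gamma-\delta}$ and that the scalar $2$-form $g$ is exactly the invariant area form of the $\mathbb{C}P^1\approx S^2$ example. This amounts to matching the abstract left-invariant connection on $K\to K/M$ with the explicit $\mathrm{SU}(2)/T$ picture along $S_{r_\alpha}$, i.e., checking that the identification of $\mathfrak{k}_\alpha$ with the off-diagonal complement of the torus in $\mathfrak{su}(2)$ intertwines the two curvatures; once the splitting $\mathfrak{g}(\alpha)\cap\mathfrak{k}=\mathbb{R}\,{\rm i}H_{\gamma-\delta}\oplus\mathfrak{k}_\alpha$ is in place, this is a direct $\mathfrak{sl}(2,\mathbb{C})_{R}$ bracket computation. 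Everything after it is formal linear algebra of dual bases.
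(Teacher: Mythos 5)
Your proposal is correct and follows essentially the same route as the paper: reduce to the $\mathrm{SU}(2)/T$ picture on each $S_{r_\alpha}$ via Proposition~\ref{propEsseeleC}, observe that the curvature there lands in $\mathbb{R}\cdot{\rm i}H_{\gamma-\delta}$ (the paper makes this explicit by computing $[X_\alpha,Y_\alpha]=2{\rm i}H_{\gamma-\delta}$ through the isomorphism $\phi_\alpha$), use the normalization $f_{{\rm i}H_{\gamma-\delta}}(S_{r_\alpha})=1$ to get $f_{{\rm i}H}(S_{r_\alpha})=\big({\rm i}H,{\rm i}H_{\gamma-\delta}^{N}\big)_c$, and conclude by the defining property of the dual basis $\mathcal{B}^{\bot}$. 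The step you flag as the main obstacle is exactly the bracket computation the paper carries out, so no gap remains.
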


\begin{proof}Take $\alpha \in \Sigma _{\mathrm{arr}}^{\bot }$ and let $\gamma,\delta \in\Sigma _{\mathbb{C},\mathrm{arr}}^{\bot }$ be such that $\alpha =\gamma_{\vert \mathfrak{a} }=\delta _{\vert \mathfrak{a}}$. By Proposition \ref{propEsseeleC} we have $\mathfrak{g}( \alpha) \approx \mathfrak{sl}(2,\mathbb{C}) $. Put $G(\alpha) =\langle \exp \mathfrak{g}(\alpha) \rangle $, $\mathfrak{k}(\alpha) =\mathfrak{g}(\alpha) \cap
\mathfrak{k}$ and $K(\alpha) =\langle \exp \mathfrak{k}(\alpha) \rangle =G(\alpha) \cap K$. We have $S_{r_{\alpha }}=G(\alpha) x_{0}=K(\alpha) x_{0}$ where $x_{0}$ is the origin of the flag manifold $\mathbb{F}$ (see \cite[Proposition 1.3]{RSM}). Let $\phi _{\alpha }\colon \mathfrak{su}(2) \rightarrow \mathfrak{k}(\alpha) $ be an isomorphism assured by Proposition~\ref{propEsseeleC} and put
\begin{gather*}
X_{\alpha }=\phi _{\alpha }\left(
\begin{matrix}
0 & 1 \\
-1 & 0%
\end{matrix}
\right), \qquad Y_{\alpha }=\phi _{\alpha }\left(
\begin{matrix}
0 & {\rm i} \\
{\rm i} & 0
\end{matrix}
\right).
\end{gather*}
We have
\begin{gather*}
\lbrack X_{\alpha },Y_{\alpha }]=2\phi _{\alpha }\left(
\begin{matrix}
{\rm i} & 0 \\
0 & -{\rm i}
\end{matrix}
\right) =2{\rm i}H_{\gamma -\delta }
\end{gather*}
so that $\Omega ( X_{\alpha },Y_{\alpha }) =2{\rm i}H_{\gamma -\delta }$. Now take ${\rm i}H\in \operatorname{span}_{\mathbb{R}}\mathcal{B}$. Then
\begin{gather*}
f_{{\rm i}H}( X_{\alpha },Y_{\alpha }) =( {\rm i}H,\Omega (X_{\alpha },Y_{\alpha })) _{c}=( {\rm i}H,2{\rm i}H_{\gamma -\delta}) _{c}.
\end{gather*}%
In particular $f_{{\rm i}H_{\gamma -\delta }} ( X_{\alpha },Y_{\alpha } ) =2 ( {\rm i}H_{\gamma -\delta },{\rm i}H_{\gamma -\delta } ) _{c}$ so that
\begin{gather*}
f_{{\rm i}H}( X_{\alpha },Y_{\alpha }) =\frac{( {\rm i}H,{\rm i}H_{\gamma-\delta }) _{c}}{( {\rm i}H_{\gamma -\delta },{\rm i}H_{\gamma -\delta}) _{c}}f_{{\rm i}H_{\gamma -\delta }}( X_{\alpha },Y_{\alpha })
=\big( {\rm i}H,{\rm i}H_{\gamma -\delta }^{N}\big) _{c}f_{{\rm i}H_{\gamma -\delta}}( X_{\alpha },Y_{\alpha }).
\end{gather*}
Hence the restriction to $S_{r_{\alpha }}$ yields%
\begin{gather*}
f_{{\rm i}H} ( S_{r_{\alpha }} ) =\big( {\rm i}H,{\rm i}H_{\gamma -\delta }^{N}\big) _{c}f_{{\rm i}H_{\gamma -\delta }} ( S_{r_{\alpha }} ) =\big( {\rm i}H,{\rm i}H_{\gamma -\delta }^{N}\big) _{c}.
\end{gather*}
Since $\{Z_{1},\ldots,Z_{k}\}$ is the basis dual to the basis $\big\{{\rm i}H_{\gamma_{1}-\delta _{1}}^{N},\ldots,{\rm i}H_{\gamma _{k}-\delta _{k}}^{N}\big\}$ with respect to $(\cdot,\cdot) _{c}$ it follows that $f_{Z_{j}} ( S_{r_{\alpha _{k}}} ) =1$ if $j=k$ and $0$ otherwise so that $\{f_{Z_{1}},\ldots, f_{Z_{k}}\}$ is the basis dual to $\{S_{r_{\alpha}}\colon \alpha \in \Sigma _{\mathrm{arr}}^{\bot }\}$, concluding the proof.
\end{proof}

This result holds also for a partial flag manifold $\mathbb{F}_{\Theta}=K/K_{\Theta }$ by taking roots in $\Sigma _{2}=\Sigma _{\mathrm{arr}}^{\bot }$ that are outside $\Theta $. By Theorem~\ref{teoResumo} the $2$-homology $H_{2}( \mathbb{F}_{\Theta },\mathbb{R}) $ is generated by the Schubert cells $S_{r_{\alpha }}^{\Theta }$ with $\alpha \in \Sigma _{\mathrm{arr}}^{\bot }\setminus \Theta $. On the other hand, let $\{Z_{1},\ldots,Z_{k}\}$ be the dual basis as in the statement of the above theorem and take an index $j$ such that both roots $\gamma _{j}$ and $\delta _{j}$ restrict to $\alpha _{j}\in \Sigma _{\mathrm{arr}}^{\bot }\setminus \Theta $. To prove that forms $f_{Z_{j}}$ corresponding to these indices form a dual basis in~$H^{2} ( \mathbb{F}_{\Theta },\mathbb{R} ) $ it remains to check that $Z_{j}$ belong to the center $\mathfrak{z}(\mathfrak{k}_{\Theta }) $ so that the forms $f_{Z_{j}}$ are well defined in $\mathbb{F}_{\Theta }$.

\begin{Lemma} Let $Z_{j}$, $j=1,\ldots,k$ be as in the above theorem the elements of the dual basis and take an index $j$ that corresponds to $\alpha _{j}\in \Sigma _{\mathrm{arr}}^{\bot }\setminus \Theta $. Then $Z_{j}\in \mathfrak{z}(\mathfrak{k}_{\Theta }) $.
\end{Lemma}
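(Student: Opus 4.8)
The plan is to make $\mathfrak{k}_{\Theta}$ explicit, reduce centrality of $Z_j$ to the vanishing of certain root values $\tilde\beta(Z_j)$, and then read off that vanishing from the Satake combinatorics together with the duality that defines $Z_j$. First I would record the decomposition $\mathfrak{k}_{\Theta}=\mathfrak{m}\oplus\sum_{\beta\in\langle\Theta\rangle^{+}}\mathfrak{k}_{\beta}$, with $\mathfrak{k}_{\beta}=(D_{\beta}+D_{-\beta})\cap\mathfrak{k}$ as in the construction of the characteristic forms; indeed $\mathfrak{p}_{\Theta}\cap\mathfrak{k}$ contains $\mathfrak{k}_{\beta}$ precisely when both $\mathfrak{g}_{\pm\beta}\subset\mathfrak{p}_{\Theta}$, i.e.\ when $\beta\in\langle\Theta\rangle$. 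Thus it suffices to show that $Z_j$ commutes with $\mathfrak{m}$ and with each $\mathfrak{k}_{\beta}$, $\beta\in\langle\Theta\rangle^{+}$.

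The first is immediate: by construction $Z_j$ is dual, with respect to $(\cdot,\cdot)_{c}$, to ${\rm i}H_{\gamma_j-\delta_j}^{N}$, hence orthogonal to every ${\rm i}H_{\mu_i}$, i.e.\ to $\mathfrak{h}_{\func{Im}}$. By Proposition~\ref{propSpanCenterm} (and Proposition~\ref{propCentrom}) $\mathfrak{z}(\mathfrak{m})=\mathfrak{h}_{\func{Im}}^{\bot}$, so $Z_j\in\mathfrak{z}(\mathfrak{m})$ and $[Z_j,\mathfrak{m}]=0$. Since $Z_j\in\mathfrak{h}_k$ acts on a complex root vector in $(\mathfrak{g}_{\mathbb{C}})_{\tilde\beta}$ by the scalar $\tilde\beta(Z_j)$, and $\mathfrak{k}_{\beta}$ is assembled from root vectors whose roots restrict to $\pm\beta$ and its multiples, commuting with all $\mathfrak{k}_{\beta}$ reduces to proving $\tilde\beta(Z_j)=0$ for every complex root $\tilde\beta$ restricting into $\langle\Theta\rangle$; by linearity it is enough to treat the simple such $\tilde\beta$, namely the simple complex roots that are either imaginary or restrict to a root of $\Theta$.

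To evaluate $\tilde\beta(Z_j)=\langle H_{\tilde\beta},Z_j\rangle$ I would split $H_{\tilde\beta}=H_{\tilde\beta}^{\mathfrak{a}}+H_{\tilde\beta}^{k}$ along the orthogonal sum $\mathfrak{h}_{\mathbb{R}}={\rm i}\mathfrak{h}_k\oplus\mathfrak{a}$. The $\mathfrak{a}$-part pairs trivially with $Z_j\in\mathfrak{h}_k$, so $\tilde\beta(Z_j)=\langle H_{\tilde\beta}^{k},Z_j\rangle$, and since $\theta=+1$ on ${\rm i}\mathfrak{h}_k$ and $\theta=-1$ on $\mathfrak{a}$ one has $H_{\tilde\beta}^{k}=\tfrac12 H_{\tilde\beta+\theta^{*}\tilde\beta}$. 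Here I would invoke the standard Araki description of the Satake diagram, $-\theta^{*}\tilde\beta=\iota(\tilde\beta)+\sum_{\mu}c_{\mu}\mu$ with $c_{\mu}\ge0$ and $\iota$ the arrow involution, which gives $H_{\tilde\beta}^{k}=\tfrac12 H_{\tilde\beta-\iota\tilde\beta}-\tfrac12\sum_{\mu}c_{\mu}H_{\mu}$. The imaginary summand lies (after multiplication by ${\rm i}$) in $\mathfrak{h}_{\func{Im}}$ and is therefore annihilated by $Z_j$.

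For the remaining term $H_{\tilde\beta-\iota\tilde\beta}$ there are two cases. If $\iota$ fixes $\tilde\beta$ — which happens for imaginary roots and for the roots lying over the rank~$1$ (split) members of $\Theta$ — then $\tilde\beta-\iota\tilde\beta=0$ and $\tilde\beta(Z_j)=0$ at once. If instead $\tilde\beta$ belongs to an arrow pair $\{\gamma_m,\delta_m\}$, so its restriction $\alpha_m\in\Theta$ has $\operatorname{rank}\alpha_m=2$, then $H_{\tilde\beta-\iota\tilde\beta}=\pm H_{\gamma_m-\delta_m}$ and $\tilde\beta(Z_j)$ is a nonzero multiple of $({\rm i}H_{\gamma_m-\delta_m},Z_j)_{c}$; by the duality defining $Z_j$ this is a multiple of $\delta_{mj}$, which vanishes because $\alpha_m\in\Theta$ and $\alpha_j\notin\Theta$ force $m\neq j$. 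The one delicate point, and the step I expect to be the main obstacle, is exactly this analysis of the $\mathfrak{h}_k$-component of the coroots through the Satake involution, in particular the pair linked to imaginary roots occurring in $\mathrm{AIII}_1$: there the computation leaves an extra term $\tfrac12\langle H_{\gamma-\delta},Z_j\rangle$, and this is precisely why the basis $\mathcal{B}$ carries ${\rm i}H_{\gamma-\delta}^{N}$ as its own vector, so that $Z_j$, being orthogonal to it, kills this term too. Assembling the cases yields $\tilde\beta(Z_j)=0$ for all relevant $\tilde\beta$, hence $[Z_j,\mathfrak{k}_{\Theta}]=0$ and $Z_j\in\mathfrak{z}(\mathfrak{k}_{\Theta})$.
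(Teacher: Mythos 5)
Your proposal is correct and follows essentially the same route as the paper: both reduce the claim to showing $\tilde\beta(Z_j)=0$ for every simple complex root $\tilde\beta$ restricting into $\Theta\cup\{0\}$, and both establish this by splitting $H_{\tilde\beta}$ along the orthogonal decomposition ${\rm i}\mathfrak{h}_k\oplus\mathfrak{a}$ and invoking exactly the orthogonality relations built into the dual basis $\mathcal{B}^{\bot}$ (including the extra basis vector ${\rm i}H_{\gamma-\delta}^{N}$ for the $\mathrm{AIII}_1$ pair linked to imaginary roots). The only difference is presentational: you package the case analysis through the Araki formula for $\theta^{*}$ and the arrow involution, where the paper treats the three cases (imaginary, multiplicity one, double arrow) directly.
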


\begin{proof}By Proposition \ref{propSpanCenterm} we have $Z_{j}\in \mathfrak{z} ( \mathfrak{m}) \subset \mathfrak{m}\subset \mathfrak{k}_{\Theta }$. Denote by $\Theta _{\mathbb{C}}\subset \Sigma _{\mathbb{C}}$ the set of
roots of the Satake diagram whose restrictions to $\mathfrak{a}$ belong to $\Theta \cup \{0\}$. Take $\gamma \in \Theta _{\mathbb{C}}$ and let $\alpha \in \Theta \cup \{0\}$ be its restriction. We claim that $\gamma (Z_{j}) =0$. There are the possibilities:
\begin{enumerate}\itemsep=0pt
\item $\alpha =0$, that is, $\gamma $ is imaginary. Then $\gamma ( Z_{j}) =0$ because $Z_{j}$ is orthogonal to ${\rm i}H_{\gamma }$.

\item $\alpha $ has multiplicity $1$ so that $\gamma =\alpha $ is not imaginary and not linked to another root of $\Sigma _{\mathbb{C}}$ by a~double arrow. Then $H_{\gamma }\in \mathfrak{a}$ and since ${\rm i}Z_{j}\in
\mathfrak{a}^{\bot }$ we have $\gamma ( Z_{j}) =0$.

\item $\gamma $ is linked to $\delta $ by a double arrow. By definition of the dual basis $\mathcal{B}^{\bot}$ we have that~$Z_{j}$ is orthogonal to ${\rm i}H_{\gamma -\delta }$. On the other hand, $H_{\gamma +\delta }\in \mathfrak{a}$ since $\alpha = ( \gamma +\delta ) /2$. Hence $\langle H_{\gamma +\delta },Z_{j}\rangle =0$ so that $\gamma ( Z_{j}) =0$ as claimed.
\end{enumerate}

It follows that $Z_{j}$ centralizes the Lie algebra $\mathfrak{g}_{\mathbb{C}} ( \Theta _{\mathbb{C}} ) $ generated by $ ( \mathfrak{g}_{\mathbb{C}} ) _{\gamma }$, $\gamma \in \Theta _{\mathbb{C}}$. Hence $Z_{j}$ centralizes $\mathfrak{k}_{\Theta }=\mathfrak{g}_{\mathbb{C}} (\Theta _{\mathbb{C}}) \cap \mathfrak{k}$ as well, concluding the proof.
\end{proof}

The $2$-forms $\{f_{Z_{1}},\ldots,f_{Z_{k}}\}$ are zero on any $\mathfrak{k}(\alpha )$ with $\alpha $ not of rank $2$. Therefore, if there is a closed form on $H^{2}(\mathbb{F}_{\Theta },\mathbb{R})$ which is non-degenerate on $\mathbb{F}_{\Theta }$ then $\Pi \backslash \Theta \subset \Sigma _{\mathrm{arr}}^{\bot }$. This implies that $\Pi \backslash \Theta $ consists of roots with double arrows in the Satake diagram. In this case, $\mathbb{F}_{\Theta } $ is a product of complex flag manifolds of the form ${\rm SU}(n)/T$.

\section[$\mathfrak{su}(p,q)$]{$\boldsymbol{\mathfrak{su}(p,q)}$} \label{secEsseu}

In this section we present concrete realizations of the flag manifolds of the Lie algebras of types $\mathrm{AIII}_{1}$ and $\mathrm{AIII}_{2}$. These correspond to $\mathfrak{su}(p,q)$ with $p\leq q$ which are non-compact real forms of $\mathfrak{sl}(p+q,\C)$.

The Lie algebras $\mathfrak{su}(p,q) $, $p\leq q$ are constituted by zero trace matrices which are skew-hermitian with respect to the hermitian form in $\mathbb{C}^{p+q}$ with matrix
\begin{gather*}
J_{p,q}=\left(
\begin{matrix}
0 & 1_{p\times p} & 0 \\
1_{p\times p} & 0 & 0 \\
0 & 0 & 1_{q-p}\times 1_{q-p}%
\end{matrix}
\right).
\end{gather*}
That is, $\mathfrak{su}(p,q) =\{Z\in M_{p+q} ( \mathbb{C} ) \colon ZJ_{p,q}+J_{p,q}Z^{\ast }=0,\, \operatorname{tr}Z=0\}$ where $Z^{\ast }$ denotes the transpose conjugate. Therefore, $\mathfrak{su}(p,q) $ is the Lie algebra of $(p+q) \times (p+q) $ matrices of the following form
\begin{gather}
\left(
\begin{matrix}
A & B & -Y^{\ast } \\
C & -A^{\ast } & -X^{\ast } \\
X & Y & Z%
\end{matrix}
\right), \qquad
\begin{array}{@{}l}
A\in \mathfrak{gl}(p,\mathbb{C}),\qquad B,C\in \mathfrak{u}(p), \\
Z\in \mathfrak{u}(q-p), \qquad \func{tr}(2\func{Im}A+Z)=0.
\end{array}\label{formatrizes}
\end{gather}
The Lie algebra $\mathfrak{su}(p,q) $ can also be realized as the set of skew-hermitian matrices with respect to the bilinear form with matrix
\begin{gather*}
I_{p,q}=\left(
\begin{matrix}
1_{p\times p} & 0 \\
0 & -1_{q\times q}%
\end{matrix}
\right).
\end{gather*}
By using either realization one can see that the complexified Lie algebra $\mathfrak{su}(p,q) _{\mathbb{C}}$ is $\mathfrak{sl} ( p+q,\mathbb{C} ) $. From the second realization it is clear that in the Cartan decomposition $\mathfrak{g}=\mathfrak{k}\oplus \mathfrak{s}$, $\mathfrak{k}$ is isomorphic to $( \mathfrak{u}(p) \oplus \mathfrak{u}(q)) /\operatorname{tr}$, that is, zero trace matrices given by diagonal block matrices with two diagonal elements in $\mathfrak{u}(p) $ e $\mathfrak{u}(q) $, respectively. In the first realization~(\ref{formatrizes}), $\mathfrak{k}$ is described as follows
\begin{gather*}
\mathfrak{k}=\left\{\left(
\begin{matrix}
A & B & -X^{\ast } \\
B & A & -Y^{\ast } \\
X & Y & Z%
\end{matrix}
\right) \colon A,B\in \mathfrak{u}(p), \ Z\in \mathfrak{u} ( q-p), \ \func{tr}(2\func{Im}A+Z)=0\right\}.
\end{gather*}%
The matrices in $\mathfrak{s}$ are hermitian, that is,
\begin{gather*}
\mathfrak{s}=\left\{\left(
\begin{matrix}
A & -B & 0 \\
B & -A & 0 \\
0 & 0 & 0%
\end{matrix}
\right) \colon A=A^{\ast }, \ B\in \mathfrak{u}(p) \right\}.
\end{gather*}%
Under these choices for the Cartan decomposition, the maximal abelian subalgebra $\mathfrak{a}$ of $\mathfrak{s}$ is the subspace of diagonal matrices in $\mathfrak{s}$, so one has
\begin{gather*}
\mathfrak{a}=\left\{\left(
\begin{matrix}
\Lambda & 0 & 0 \\
0 & -\Lambda & 0 \\
0 & 0 & 0%
\end{matrix}
\right) \colon \Lambda =\operatorname{diag}\{a_{1},\ldots,a_{p}\},\ a_{j}\in \mathbb{R}\right\}.
\end{gather*}
Here one sees that the real rank of $\mathfrak{su}(p,q) $ is $p$ if $p\leq q$. A Cartan subalgebra $\mathfrak{h}$ containing $\mathfrak{a}$ is given by diagonal matrices of the form
\begin{gather*}
\left(
\begin{matrix}
D & 0 & 0 \\
0 & -\overline{D} & 0 \\
0 & 0 & {\rm i}T
\end{matrix}
\right),
\end{gather*}
where $D$ has complex entries while $T$ has real entries so that ${\rm i}T$ has pure imaginary entries. The zero trace condition reads as $\operatorname{tr} ( 2\func{Im}D+T ) =0$, so the real part of $D$ is arbitrary. The Cartan subalgebra $\mathfrak{h}$ decomposes as $\mathfrak{h}=\mathfrak{h}_{k}\oplus \mathfrak{a}$ where $\mathfrak{h}_{k}$ is the set of diagonal matrices in $\mathfrak{k}$, that is,
\begin{gather*}
\left(
\begin{matrix}
{\rm i}\Lambda & 0 & 0 \\
0 & {\rm i}\Lambda & 0 \\
0 & 0 & {\rm i}T
\end{matrix}
\right)
\end{gather*}
with $\Lambda $ and $T$ are diagonal matrices such that $\operatorname{tr}(2\Lambda +T)=0$. Therefore, $\dim \mathfrak{h}_{k}=p+(q-p) -1=q-1$ is the rank of $\mathfrak{su}(p,q) $ and $\dim \mathfrak{a}+\dim
\mathfrak{h}_{k}=p+q-1$, which is the rank of $\mathfrak{sl} ( p+q,\mathbb{C} ) $. The Cartan subalgebra $\mathfrak{h}_{\mathbb{C}}$ is the Lie algebra of complex diagonal matrices in $\mathfrak{sl} ( n,\mathbb{C} ) $, $n=p+q$.

In order to give a basis $\{Z_1, \ldots,Z_{p-1}\}$ as in Theorem \ref{teo.baseZ}, we need first a basis $\mathcal{B}$ of $\mathfrak{h }_k$ determined by the roots in $\Sigma_{\mathbb{C},\mathrm{arr}}^\bot$, $\Sigma_{\mathbb{C},\mathrm{arr}}$ and $\Sigma_{\mathrm{Im}}$. So we proceed with the description of the root system.

The roots of $\mathfrak{h}_{\mathbb{C}}$ are the linear functionals given by the differences of the diagonal coordinate functionals in $\mathfrak{h}_{\mathbb{C}}$. To simplify notations, for $H=\operatorname{diag}\{a_{1},\ldots,a_{n}\}\in \mathfrak{h }_\mathbb{C}$ write $\mu _{j}(H) =a_{j}$ if $1\leq j\leq 2p$ and $\theta _{j}(H) =a_{2p+j}$ if $1\leq j\leq q-p$.

As usual in $\mathfrak{sl}( n,\mathbb{C}) $, $\mathfrak{h}_{\mathbb{R}}$ is the subspace of real diagonal matrices. The imaginary roots (annihilating on $\mathfrak{a}$) are $\theta _{j}-\theta _{k}$. In particular, there are no imaginary roots if $p=q$.

The Satake diagram is given by a simple system $\Sigma _{\mathbb{C}}$ of $\mathfrak{h}_{\mathbb{C}}$ such that the imaginary roots in~$\Sigma _{\mathbb{C}}$ span the set of all imaginary roots. Simple systems $\Sigma _{\mathbb{C}}$ are written differently in the cases $p<q$ and $p=q$. If $p<q$ then
\begin{gather*}
\Sigma _{\mathbb{C}} = \{\mu _{1}-\mu _{2},\ldots,\mu _{p-1}-\mu _{p}\}\cup \{\mu _{p}-\theta _{1}\} \\
\hphantom{\Sigma _{\mathbb{C}} =}{} \cup \{\theta _{1}-\theta _{2},\ldots,\theta _{q-p-1}-\theta _{q-p}\}\cup \{\theta _{q-p}-\mu _{2p}\} \cup \{\mu _{2p}-\mu _{2p-1},\ldots,\mu _{p+2}-\mu _{p+1}\},
\end{gather*}
while for $p=q$ we have
\begin{gather*}
\Sigma _{\mathbb{C}}=\{\mu _{1}-\mu _{2},\ldots,\mu _{p-1}-\mu _{p}\}\cup \{\mu _{p}-\mu _{2p}\}\cup \{\mu _{2p}-\mu _{2p-1},\ldots,\mu _{p+2}-\mu_{p+1}\}.
\end{gather*}%
It can be checked that these sets are in fact simple systems of roots with Dynkin diagram $A_{l}$.

To write the restricted system determined by $\mathfrak{a}$ we consider the parametrization by real matrices $\Lambda =\operatorname{diag}\{a_{1},\ldots,a_{p}\}$ in a way that $\mathfrak{a }$ is constituted by the following
matrices
\begin{gather*}
\left(
\begin{matrix}
\Lambda & 0 & 0 \\
0 & -\Lambda & 0 \\
0 & 0 & 0%
\end{matrix}
\right).
\end{gather*}
Denote $\lambda_j(\Lambda)=a_j$.
\begin{enumerate}\itemsep=0pt
\item The simple roots $\theta _{j}-\theta _{j+1}$ are imaginary.
\item For $j=1,\ldots,p-1$, the restrictions to $\mathfrak{a}$ given by the simple roots $\mu _{j}-\mu _{j+1}$ and $\mu _{p+j+1}-\mu _{p+j}$ are equal to $\lambda _{j}-\lambda _{j+1}$.
\item If $p<q$ the roots $\mu _{p}-\theta _{1}$ are $\theta _{q-p}-\mu _{2p}$ restrict to $\lambda _{p}$.
\item If $p=q$ the root $\mu _{p}-\mu _{2p}$ restricts to $2\lambda _{p}$.
\end{enumerate}

The simple system $\Sigma $ obtained by restriction of $\Sigma _{\mathbb{C}}$ is
\begin{gather*}
\Sigma =\{\lambda _{1}-\lambda _{2},\ldots,\lambda _{p-1}-\lambda _{p},\lambda _{p}\}\qquad \text{for} \quad p<q, \quad \text{and}\\
\Sigma =\{\lambda _{1}-\lambda _{2},\ldots,\lambda _{p-1}-\lambda_{p},2\lambda _{p}\}\qquad \text{for} \quad p=q.
\end{gather*}
The Satake diagram of $\Sigma _{\mathbb{C}}$ and the Dynkin diagram of $\Sigma$ in the case $p<q$ are

\centerline{\includegraphics{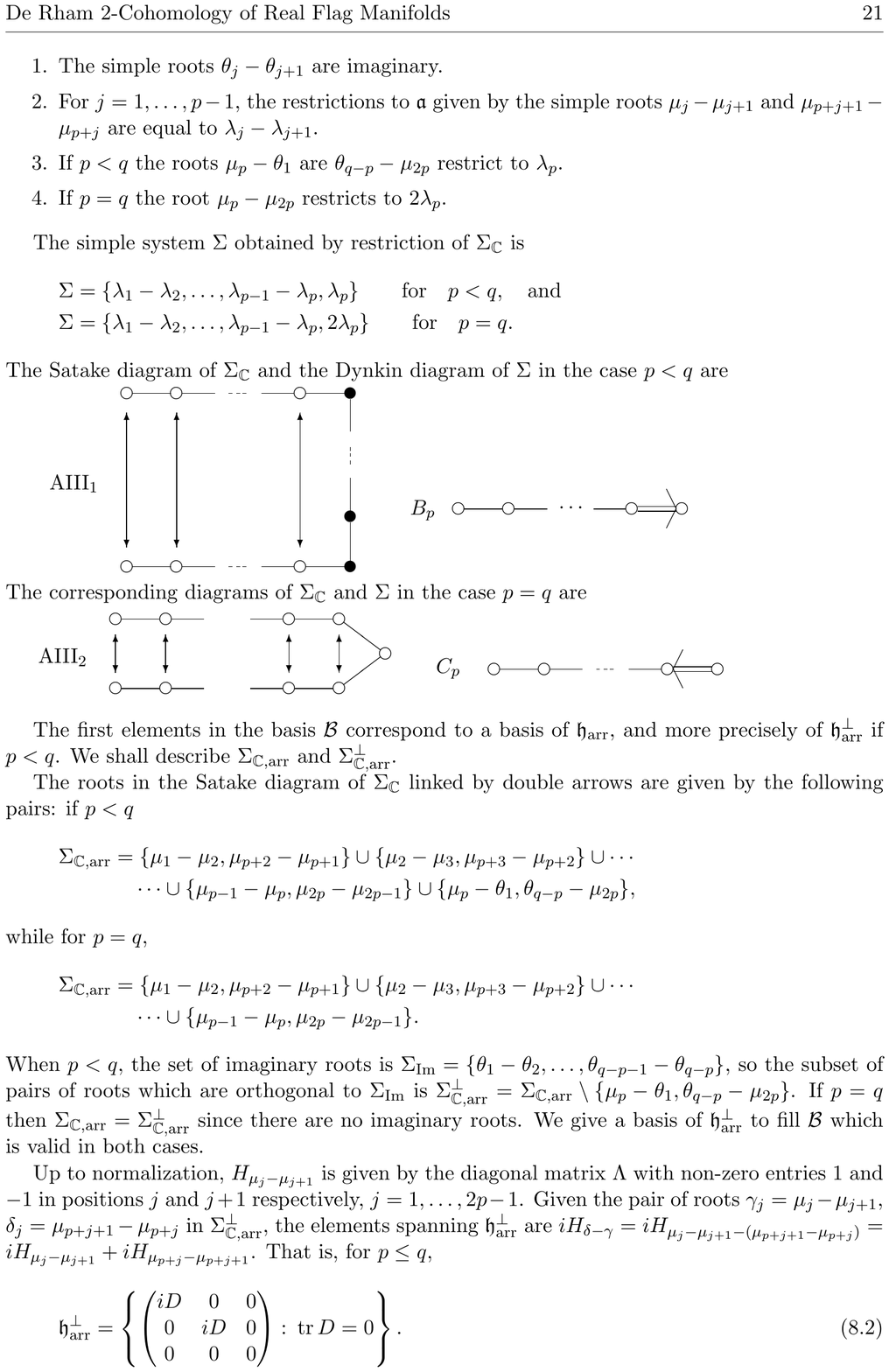}}

\noindent The corresponding diagrams of $\Sigma _{\mathbb{C}}$ and $\Sigma$ in the case $p=q$ are

\centerline{\includegraphics{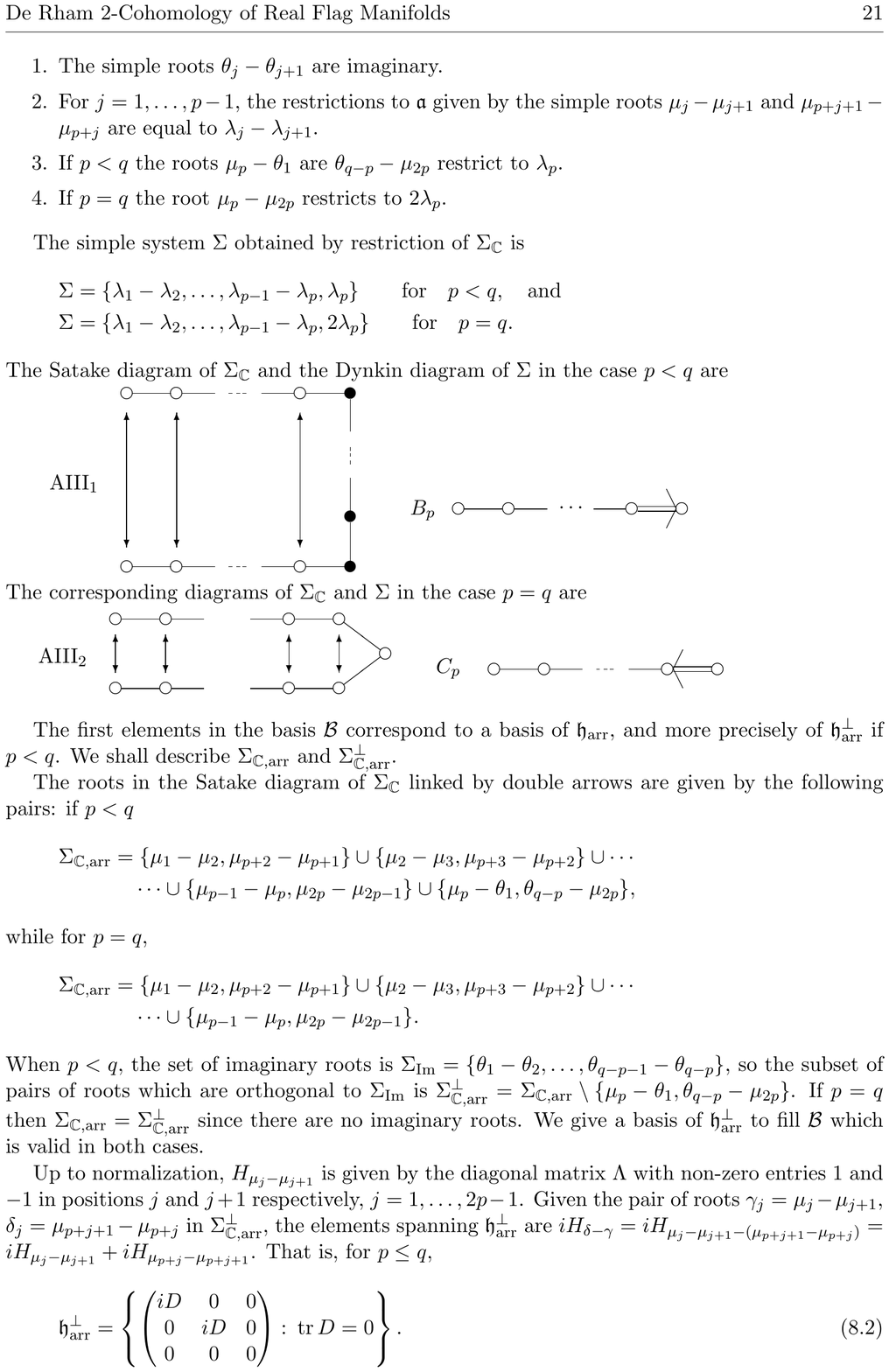}}

The first elements in the basis $\mathcal{B}$ correspond to a basis of $\mathfrak{h }_{\mathrm{arr}}$, and more precisely of $\mathfrak{h }_{\mathrm{arr}}^\bot$ if $p<q$. We shall describe $\Sigma_{\mathbb{C},\mathrm{arr}}$
and $\Sigma_{\mathbb{C},\mathrm{arr}}^\bot$.

The roots in the Satake diagram of $\Sigma _{\mathbb{C}}$ linked by double arrows are given by the following pairs: if $p<q$
\begin{gather*}
\Sigma _{\mathbb{C},\mathrm{arr}} = \{\mu _{1}-\mu _{2},\mu _{p+2}-\mu _{p+1}\}\cup \{\mu _{2}-\mu _{3},\mu _{p+3}-\mu _{p+2}\}\cup \cdots \\
\hphantom{\Sigma _{\mathbb{C},\mathrm{arr}} =}{} \cdots \cup \{\mu _{p-1}-\mu _{p},\mu _{2p}-\mu _{2p-1}\}\cup \{\mu
_{p}-\theta _{1},\theta _{q-p}-\mu _{2p}\},
\end{gather*}
while for $p=q$,
\begin{gather*}
\Sigma _{\mathbb{C},\mathrm{arr}} =\{\mu _{1}-\mu _{2},\mu _{p+2}-\mu
_{p+1}\}\cup \{\mu _{2}-\mu _{3},\mu _{p+3}-\mu _{p+2}\}\cup \!\cdots\!
\cup \{\mu _{p-1}\!-\mu _{p},\mu _{2p}-\mu _{2p-1}\}.
\end{gather*}
When $p<q$, the set of imaginary roots is $\Sigma _{\func{Im}}=\{\theta_{1}-\theta _{2},\ldots,\theta _{q-p-1}-\theta _{q-p}\}$, so the subset of pairs of roots which are orthogonal to $\Sigma _{\func{Im}}$ is $\Sigma _{\mathbb{C},\mathrm{arr}}^{\bot }= \Sigma _{\mathbb{C}\mathrm{,arr}}\setminus\{\mu _{p}-\theta _{1},\theta _{q-p}-\mu _{2p}\}$. If $p=q$ then $\Sigma _{\mathbb{C}\mathrm{,arr}}=\Sigma _{\mathbb{C}\mathrm{,arr}}^{\bot }$ since there are no imaginary roots. We give a basis of $\mathfrak{h}_{\mathrm{arr}}^{\bot }$ to fill $\mathcal{B}$ which is valid in both cases.

Up to normalization, $H_{\mu _{j}-\mu _{j+1}}$ is given by the diagonal matrix $\Lambda $ with non-zero entries $1$ and $-1$ in positions $j$ and $j+1$ respectively, $j=1, \ldots,2p-1$. Given the pair of roots $%
\gamma_j=\mu_j-\mu_{j+1}$, $\delta_j=\mu_{p+j+1}-\mu_{p+j}$ in $\Sigma _{\mathbb{C},\mathrm{arr}}^\bot$, the elements spanning $\mathfrak{h }_{\mathrm{arr}}^\bot$ are ${\rm i}H_{\delta-\gamma}={\rm i} H_{\mu _{j}-\mu _{j+1}-(\mu_{p+j+1}-\mu _{p+j})}={\rm i}H_{\mu _{j}-\mu _{j+1}}+{\rm i}H_{\mu _{p+j}-\mu _{p+j+1}}$. That is, for $p\leq q$,
\begin{gather}
\mathfrak{h}_{\mathrm{arr}}^{\bot }=\left\{\left(
\begin{matrix}
{\rm i}D & 0 & 0 \\
0 & {\rm i}D & 0 \\
0 & 0 & 0
\end{matrix}
\right)\colon \operatorname{tr}D=0\right\}. \label{formatrizhortogonal}
\end{gather}
The first $p-1$ elements in the basis $\mathcal{B}$ are multiples
of
\begin{gather*}
{\rm i}H_{\mu _{j}-\mu _{j+1}+\mu _{p+j}-\mu _{p+j+1}}=\left(
\begin{matrix}
{\rm i}D_{j,j+1} & 0 & 0 \\
0 & {\rm i}D_{j,j+1} & 0 \\
0 & 0 & 0
\end{matrix}
\right)
\end{gather*}
with $D_{j,j+1}=\operatorname{diag}\{0,\ldots,1_{j},-1_{j+1},\ldots,0\}$. In the case $p=q$ these elements are enough to complete $\mathcal{B}$ since there are no imaginary roots.

Suppose that $p<q$, that is, we are in the $\mathrm{AIII}_{1}$ case. The last elements of $\mathcal{B}$ constitute a~basis of $\mathfrak{h }_{\mathrm{Im}}$. We have $\Sigma_{\func{Im}} =\{\theta_1-\theta_2,\ldots,\theta_{q-p-1}-\theta_{q-p}\}$, therefore
\begin{gather*}
\mathfrak{k}_{\func{Im}} = \left\{\left(
\begin{matrix}
0 & 0 & 0 \\
0 & 0 & 0 \\
0 & 0 & Z
\end{matrix}
\right)\colon Z\in \mathfrak{su}(q-p) \right\}, \\
\mathfrak{h}_{\func{Im}} = \left\{\left(
\begin{matrix}
0 & 0 & 0 \\
0 & 0 & 0 \\
0 & 0 & {\rm i}T
\end{matrix}
\right)\colon \operatorname{tr}T=0 \right\},
\end{gather*}
with $T$ diagonal with real entries. Up to normalization ${\rm i}H_{\theta_j-\theta_{j+1}}\in \mathfrak{h }_{\mathrm{Im}}$ is given by a~matrix as above with $T$ diagonal with $1$ in position $j$ and $-1$ in position $j+1$, for $j=1, \ldots,q-p-1$.

The center $\mathfrak{z }(\mathfrak{m })$ is $\mathfrak{h}_{\func{Im}}^{\bot}$, the orthogonal of $\mathfrak{h}_{\func{Im}}$ in $\mathfrak{h}_{k}$ (see Proposition~\ref{propCentrom}), so we have
\begin{gather*}
\mathfrak{h}_{\func{Im}}^{\bot }=\left\{\left(
\begin{matrix}
{\rm i}D & 0 & 0 \\
0 & {\rm i}D & 0 \\
0 & 0 & {\rm i}a\, \mathrm{Id}
\end{matrix}
\right)\colon 2\operatorname{tr}D+(q-p) a=0 \right\}.
\end{gather*}

One last element in $\mathcal{B}$ is missing since the inclusion $\mathfrak{h}_{\mathrm{arr}}^{\bot }\subset \mathfrak{h}_{\func{Im}}^{\bot }$ is strict and of codimension one. The remaining element is a non zero multiple of ${\rm i}H_{\mu_p-\theta_1-(\theta_{q-p}-\mu_{2p})}$ since $\{\mu_p-\theta_1,\theta_{q-p}-\mu_{2p}\}$ is the only pair of complex roots in $\Sigma_{\mathbb{C},\mathrm{arr}}$ linked to imaginary roots. The matrices corresponding to $H_{\mu_p-\theta_1}$ and $H_{\theta_{q-p}-\mu_{2p}}$ are diagonal matrices with non zero entries being $1$ and $-1$ in positions $p$, $2p+1$ and $p+q$, $2p$, respectively. Then ${\rm i}H_{\mu_p-\theta_1-(\theta_{q-p}-\mu_{2p})}={\rm i}(H_{\mu_p-\theta_1}-H_{\theta_{q-p}-\mu_{2p}})$.

The computations above account to
\begin{gather*}
\mathcal{B}= \big\{ {\rm i}H_{\mu_{j}-\mu _{j+1}+\mu _{p+j}-\mu _{p+j+1}}^N\big\}_{j=1}^{p-1} \cup\big\{{\rm i}H_{\mu_p-\theta_1+\mu_{2p}-\theta_{q-p}}^N\big\} \cup\big\{{\rm i}H_{\theta_j-\theta_{j+1}}^N\big\}_{j=1}^{q-p-1}.
\end{gather*}

Let $\mathcal{B}^{\bot }$ be the dual basis of $\mathcal{B}$ with respect to the Cartan--Killing form in $\mathfrak{sl}(p+q,\mathbb{C})$. The first $p-1$ elements of $\mathcal{B}^{\bot }$ are the elements $Z_{1},\ldots,Z_{p-1}$ appearing in Theorem~\ref{teo.baseZ}. These are, up to normalization,
\begin{gather*}
Z_{j}=\left(
\begin{matrix}
{\rm i}E_{j} & 0 & 0 \\
0 & {\rm i}E_{j} & 0 \\
0 & 0 & -{\rm i}a\func{Id}
\end{matrix}
\right), \qquad j=1,\ldots,p-1,
\end{gather*}%
where $E_{j}=\operatorname{diag}\{b,\ldots,b,-a,\ldots,-a\}$, the last $b$ is in position $j$, and $a,b\in \mathbb{R}$ verify $2j-a(p+q) =0$ and $b+a=1$, that is, $a=2j/(p+q)$, $b=(p+q-2j)/(p+q)$. Through the Weil construction, $\{f_{Z_{1}},\ldots,f_{Z_{p-1}}\}$ is a basis of $H^{2}(\mathbb{F},\mathbb{R})$.

For a partial flag manifold $\mathbb{F}_{\Theta }$ with $\Theta \subset \Sigma $ the $2$-homology $H_{2}(\mathbb{F}_{\Theta },\mathbb{R})$ is spanned by the Schubert cells $S_{r_{\alpha }}$ with $\alpha $ running
through the rank $2$ simple roots in $\Sigma $ outside $\Theta $. Hence as in the case of the maximal flag manifold we get a basis of $H^{2}(\mathbb{F}_{\Theta },\mathbb{R})$ of the form $\{f_{Z_{j_{1}}},\ldots,f_{Z_{j_{s}}}\}$ where $j_{1},\ldots,j_{s}$ are the indices corresponding to the rank~$2$ roots in $\Sigma \setminus \Theta $ (long roots if $p<q$ and short roots if $p=q$). This basis is dual to the Schubert cells.

\subsection*{Acknowledgements}

V.~del Barco supported by FAPESP grants 2015/23896-5 and 2017/13725-4. L.A.B.~San Martin supported by CNPq grant 476024/2012-9 and FAPESP grant 2012/18780-0. The authors express their gratitude to Lonardo Rabelo for careful reading a previous version of this manuscript and his useful suggestions.

\pdfbookmark[1]{References}{ref}
\LastPageEnding


\begin{thebibliography}{99}
\footnotesize\itemsep=0pt

\bibitem{BS}
Bott R., Samelson H., Applications of the theory of {M}orse to symmetric
 spaces, \href{https://doi.org/10.2307/2372843}{\textit{Amer.~J. Math.}} \textbf{80} (1958), 964--1029.

\bibitem{Ha}
Hatcher A., Algebraic topology, Cambridge University Press, Cambridge, 2002.

\bibitem{He}
Helgason S., Differential geometry, {L}ie groups, and symmetric spaces,
 \textit{Pure and Applied Mathematics}, Vol.~80, Academic Press, Inc., New
 York~-- London, 1978.

\bibitem{knapp}
Knapp A.W., Lie groups beyond an introduction, \textit{Progress in
 Mathematics}, Vol.~140, \href{https://doi.org/10.1007/978-1-4757-2453-0}{Birkh\"{a}user Boston, Inc.}, Boston, MA, 1996.

\bibitem{kn}
Kobayashi S., Nomizu K., Foundations of differential geometry, {V}ol.~{II},
 \textit{Interscience Tracts in Pure and Applied Mathematics}, Vol.~15,
 Interscience Publishers John Wiley \& Sons, Inc., New York~-- London~--
 Sydney, 1969.

\bibitem{Ko}
Kocherlakota R.R., Integral homology of real flag manifolds and loop spaces of
 symmetric spaces, \href{https://doi.org/10.1006/aima.1995.1001}{\textit{Adv. Math.}} \textbf{110} (1995), 1--46.

\bibitem{Ma}
Mare A.-L., Equivariant cohomology of real flag manifolds, \href{https://doi.org/10.1016/j.difgeo.2005.09.006}{\textit{Differential
 Geom. Appl.}} \textbf{24} (2006), 223--229, \href{https://arxiv.org/abs/math.DG/0404369}{arXiv:math.DG/0404369}.

\bibitem{RSM}
Rabelo L., San~Martin L.A.B., Cellular homology of real flag manifolds,
 \href{https://arxiv.org/abs/1810.00934}{arXiv:1810.00934}.

\bibitem{alglie}
San~Martin L.A.B., \'Algebras de Lie, 2nd ed.,
 UNICAMP, Campinas, 2010.

\bibitem{SR}
Silva J.L., Rabelo L., Half-shifted {Y}oung diagrams and homology of real
 {G}rassmannians, \href{https://arxiv.org/abs/1604.02177}{arXiv:1604.02177}.

\bibitem{WarG}
Warner G., Harmonic analysis on semi-simple {L}ie groups.~{I}, \textit{Die
 Grundlehren der mathematischen Wissenschaften}, Vol.~188, \href{https://doi.org/10.1007/978-3-642-50275-0}{Springer-Verlag},
 New York~-- Heidelberg, 1972.

\bibitem{Wi}
Wiggerman M., The fundamental group of a real flag manifold, \href{https://doi.org/10.1016/S0019-3577(97)87572-6}{\textit{Indag.
 Math. (N.S.)}} \textbf{9} (1998), 141--153.

\end{thebibliography}
\end{document}